\documentclass[11pt, twoside]{article}

\usepackage[T1]{fontenc}
\usepackage{amssymb,amsmath,amstext}
\usepackage{amsfonts}
\usepackage{amsthm}

\usepackage{fullpage} 
\usepackage{verbatim}

\theoremstyle{plain}
\newtheorem{thm}{Theorem}[section]
\newtheorem{cor}[thm]{Corollary}
\newtheorem{lma}[thm]{Lemma}

\newtheorem{ass}[thm]{Assumption}

\theoremstyle{definition}
\newtheorem{defi}[thm]{Definition}
\newtheorem{rmk}[thm]{Remark}
\newtheorem{exa}[thm]{Example}

\theoremstyle{remark}

\newtheorem*{claim}{Claim}

\numberwithin{equation}{section}

\newcommand{\cl}{\mathrm{cl}}

\newcommand{\mb}{\mathbf}

\newcommand{\es}{\emptyset}

\newcommand{\nts}{\negthickspace}

\newcommand{\mcA}{\mathcal{A}}
\newcommand{\mcB}{\mathcal{B}}
\newcommand{\mcC}{\mathcal{C}}
\newcommand{\mcD}{\mathcal{D}}

\newcommand{\mcF}{\mathcal{F}}
\newcommand{\mcG}{\mathcal{G}}
\newcommand{\mcH}{\mathcal{H}}

\newcommand{\mcM}{\mathcal{M}}
\newcommand{\mcN}{\mathcal{N}}

\newcommand{\mcP}{\mathcal{P}}

\newcommand{\mcS}{\mathcal{S}}

\newcommand{\mcU}{\mathcal{U}}
\newcommand{\mcV}{\mathcal{V}}

\newcommand{\mbC}{\mathbf{C}}

\newcommand{\mbG}{\mathbf{G}}

\newcommand{\mbK}{\mathbf{K}}
\newcommand{\mbS}{\mathbf{S}}

\newcommand{\mbX}{\mathbf{X}}

\newcommand{\mbbP}{\mathbb{P}}
\newcommand{\mbbN}{\mathbb{N}}
\newcommand{\mbbR}{\mathbb{R}}
\newcommand{\mbbZ}{\mathbb{Z}}

\newcommand{\K}{\textbf{K}}
\newcommand{\C}{\textbf{C}}
\newcommand{\M}{\mathcal{M}}
\newcommand{\A}{\mathcal{A}}
\newcommand{\B}{\mathcal{B}}

\newcommand{\G}{\mathcal{G}}
\newcommand{\V}{\mathcal{V}}
\newcommand{\W}{\mathcal{W}}

\newcommand{\Lrel}{L_{rel}}
\newcommand{\Vrel}{V_{rel}}
\newcommand{\Vcol}{V_{col}}

\newcommand{\reduct}{\upharpoonright}
\newcommand{\uhrc}{\nts \upharpoonright \nts}

\newcommand{\N}{\mathcal{N}}

\newcommand{\os}{\overset}
\newcommand{\us}{\underset}

\voffset = 0truemm
\oddsidemargin = 8.9truemm
\evensidemargin = 3.6truemm
\topmargin = -10truemm
\textheight = 235.2truemm
\textwidth = 146.67truemm
\footskip = 13truemm

\title{Random $l$-colourable structures with a pregeometry}

\author{Ove Ahlman and Vera Koponen}

\begin{document}

\maketitle

\begin{abstract} 
\noindent
We study finite $l$-colourable structures with an underlying pregeometry.
The probability measure that is used corresponds to a process of generating
such structures (with a given underlying pregeometry) by which colours are
first randomly assigned to all 1-dimensional subspaces and then relationships
are assigned in such a way that the colouring conditions are satisfied but
apart from this in a random way. We can then ask what the probability is that
the resulting structure, where we now forget the specific colouring of the
generating process, has a given property.
With this measure we get the following results:
\begin{enumerate}
\item A zero-one law.
\item The set of sentences with asymptotic probability 1 has an explicit
axiomatisation which is presented.
\item There is a formula $\xi(x,y)$ (not directly speaking about colours) 
such that, with asymptotic probability 1, the relation ``there is an $l$-colouring 
which assigns the same colour to $x$ and $y$'' is defined by $\xi(x,y)$.
\item With asymptotic probability 1, an $l$-colourable structure has a unique $l$-colouring
(up to permutation of the colours).
\end{enumerate}
{\em Keywords}: model theory, finite structure, zero-one law, colouring, pregeometry.
\end{abstract}

\section{Introduction}\label{introduction}

\noindent
We begin with some background.
Let $l \geq 2$ be an integer.
Random $l$-colourable (undirected) graphs were studied by Kolaitis, Prömel and Rothschild
in \cite{KPR} as part of proving a zero-one law for $(l+1)$-clique-free graphs.  
They proved that random $l$-colorable graphs satisfies a (labelled) zero-one law, when the uniform probability
measure is used. In other words, if $\mbC_n$ denotes the set of undirected $l$-colourable graphs
with vertices $1, \ldots, n$, then, for every sentence $\varphi$ in a language with only a binary relation
symbol (besides the identity symbol), the proportion of graphs in $\mbC_n$ which satisfy $\varphi$
approaches either 0 or 1. They also showed that the proportion of graphs in $\mbC_n$ which have a
unique $l$-colouring (up to permuting the colours) approaches 1 as $n \to \infty$.
In \cite{KPR} its authors also proved the other statements labelled 1--4 in this paper's
abstract, when using the uniform probability measure on $\mbC_n$, although in case of~3 it is not
made explicit. 
This work was preceeded, and probably stimulated, by an article of Erd\"{o}s, Kleitman and
Rothschild \cite{EKR} in which it was proved that proportion of triangle-free graphs with
vertices $1, \ldots, n$ which are bipartite (2-colourable) approaches 1 as $n \to \infty$.

One can generalise $l$-colourings from structures with only binary relations to structures
with relations of any arity $r \geq 2$ by saying that a structure $\mcM$ is $l$-coloured if
the elements of $\mcM$ can be assigned colours from the set of colours $\{1, \ldots, l\}$
in such that if $\mcM \models R(a_1, \ldots, a_r)$ for some relation symbol $R$, then
$\{a_1, \ldots, a_r\}$ contains at least two elements with different colour.
Another way of generalising the notion of $l$-colouring for graphs, giving the notion
of {\em strong} $l$-colouring, is to require that if $\mcM \models R(a_1, \ldots, a_r)$
then $i \neq j$ implies that $a_i$ and $a_j$ have different colours.
(If the language has only binary relation symbols then there is no difference between
the two notions of $l$-colouring.)

In \cite{Kop12}, Koponen proved that, for every finite relational langauge,
if $\mbC_n$ is the set of $l$-colourable structures with universe $\{1, \ldots, n\}$, 
then the statements 1--4 from the
abstract hold, for the dimension conditional probability measure, as well as for the
uniform probability measure, on $\mbC_n$. The same results hold if we instead consider
{\em strongly} $l$-colourable structures. Moreover, the results still hold, for both types of
colourings and both probability measures, if we insist that some of the relation
symbols are always interpreted as irreflexive and symmetric relations.
A consequence of the zero-one law for (strongly) $l$-colourable structures is that
if, for some finite relational vocabulary and for each positive $n \in \mbbN$, $\mbK_n$ is a set of structures 
with universe $\{1, \ldots, n\}$ containing every $l$-colourable structure with that
universe, and the probability, using either the dimension
conditional measure or the uniform measure, that a random member of $\mbK_n$
is (strongly) $l$-colourable approaches 1 as $n \to \infty$, then $\mbK_n$ has a zero-one
law for the corresponding measure; i.e. for every sentence $\varphi$, the probability that
$\varphi$ is true in a random $\mcM \in \mbK_n$ approaches either 0 or 1 as $n \to \infty$.
In \cite{PS}, Person and Schacht proved that if $\mcF$ denotes the Fano plane as 
a 3-hypergraph (so $\mcF$ has seven elements and seven 3-hyperedges such that every
pair of distinct elements are contained in a unique 3-hyperedge) and
$\mbK_n$ is the set of $\mcF$-free 3-hypergraphs with universe (vertex set) $\{1, \ldots, n\}$,
then the proportion of hypergraphs in $\mbK_n$ which are 2-colourable approaches 1 as $n \to \infty$.
Since every 2-colourable 3-hypergraph is $\mcF$-free, 
it follows the $\mcF$-free 3-hypergraphs satisfy a zero-one law if we use the uniform probability measure.
As another example, Balogh and Mubayi \cite{BM} have proved that if $\mcH$ denotes the 
hypergraph with vertices $1, 2, 3, 4, 5$ and 3-hyperedges $\{1,2,3\}$, $\{1,2,4\}$
and $\{3,4,5\}$ and if $\mbK_n$ denotes the set of $\mcH$-free 3-hypergraphs with universe $\{1, \ldots, n\}$,
then the proportion of hypergraphs in $\mbK_n$ which are strongly 3-colourable approaches~1 as $n \to \infty$.
Since every strongly 3-colourable 3-hypergraph is $\mcH$-free it follows that 
$\mcH$-free 3-hypergraphs satisfy a zero-one law.

In the present article we generalise the work in \cite{Kop12} to the context of 
(strongly) $l$-colourable structures with an underlying (combinatorial) pregeometry,
also called matroid. 
Roughly speaking, a structure $\mcM$ with a pregeometry will be called $l$-colourable if
its 1-dimensional subspaces (i.e. closed subsets of $M$) can be assigned colours from
$l$ given colours in such a way that if $R$ is a relation symbol and
$\mcM \models R(a_1, \ldots, a_r)$, then there are $i$ and $j$ such that the subspaces spanned
by $a_i$ and by $a_j$, respectively, have different colours.
A structure $\mcM$ will be called {\em strongly} $l$-colourable if its 1-dimensional
subspaces can be assigned colours from $l$ given colours in such a way that
if $\mcM \models R(a_1, \ldots, a_r)$, then any two distinct 1-dimensional subspaces
that are included in the closure of $\{a_1, \ldots, a_r\}$ have different colours.
The main motivation for this generalisation is to understand how the combinatorics
of colourings work out if the elements of a structure are related to each other 
in a ``geometrical way'', where in particular, the role of cardinality is taken over by dimension. 
The main examples of pregeometries for which the results of this article apply
are vector spaces, projective spaces and affine spaces over some fixed finite field.
Another motivation is the fact that pregeometries have played an important
role in the study of infinite models and one may ask to what extent the notion of pregeometry
can be combined with the study of asymptotic properties of finite structures.
In \cite{Kop12} a framework for studying asymptotic properties of finite structures with an
underlying pregeometry was presented. Here we work within that framework, but since
we only consider (strongly) $l$-colourable structures some notions from \cite{Kop12} become
simpler here.

We now give rough explanations of the notions that will be involved and the main results.
Precise definitions are given in Section~\ref{preliminaries}.
We fix an integer $l \geq 2$.
$L_{pre}$ denotes a first-order language and for every $n \in \mbbN$, $\mcG_n$ is an $L_{pre}$-structure
such that $(G_n, \cl_{\mcG_n})$ is a pregeometry (Definition~\ref{definition of pregeometry}) where the 
closure operator $\cl_{\mcG_n}$ is definable by $L_{pre}$-formulas (in a sense given by Definition~\ref{spredef} and Assumption~\ref{assumptions on languages}). 
We will consider the property `polynomial $k$-saturation' 
(Definition~\ref{definition of polynomial k-saturation for classes}) of the enumerated set
$\mbG = \{\mcG_n : n \in \mbbN\}$. 
From Assumption~\ref{assumptions on languages} it follows that the dimension of $G_n$ 
approaches infinity as $n$ tends to infinity.
The language $L_{rel}$ (from Assumption~\ref{assumptions on languages}) includes $L_{pre}$ and has, in addition,
finitely many new relation symbols, all of arity at least 2.
By $\mbC_n$ we denote the set of all $L_{rel}$-structures $\mcM$ such that
$\mcM \uhrc L_{pre} = \mcG_n$ and $\mcM$ is $l$-colourable (Definition~\ref{coldef}).
By $\mbS_n$ we denote the set of all $L_{rel}$-structures $\mcM$ such that
$\mcM \uhrc L_{pre} = \mcG_n$ and $\mcM$ is strongly $l$-colourable (Definition~\ref{coldef}).
For every $n$, $\delta_n^\mbC$ denotes the probability measure given by 
Definition~\ref{definition of probability measures},
which means, roughly speaking, that if $\mbX \subseteq \mbC_n$, then $\delta_n^\mbC(\mbX)$
is the probability that $\mcM \in \mbC_n$ belongs to $\mbX$ if $\mcM$ generated by
the following procedure: first randomly assign $l$ colours to the 1-dimensional subspaces
of $M$, then, for every relation symbol $R$ that belongs to the vocabulary of $L_{rel}$ but
not to the vocabulary of $L_{pre}$, choose an interpretation of $R$ randomly 
from all possibilities of interpretations $R^\mcM$ 
such that the previous assignment of colours is an $l$-colouring of the resulting structure,
and finally forget the colour assignment, leaving us with an $L_{rel}$-structure.
The probability measure $\delta_n^\mbS$ on $\mbS_n$ is defined similarly 
(Definition~\ref{definition of probability measures}).
If $\varphi$ is an $L_{rel}$-sentence then 
$\delta_n^\mbC(\varphi) = \delta_n^\mbC\big(\{\mcM \in \mbC_n : \mcM \models \varphi\}\big)$
and similarly for $\delta_n^\mbS(\varphi)$.

\begin{thm}\label{zero-one law}
Suppose that Assumption~\ref{assumptions on languages} holds and that 
$\mbG = \{\mcG_n : n \in \mbbN\}$ is polynomially $k$-saturated for every $k \in \mbbN$.
Then, for every $L_{rel}$-sentence $\varphi$,
$\delta_n^\mbC(\varphi)$ approaches either 0 or 1, and
$\delta_n^\mbS(\varphi)$ approaches either 0 or 1, as $n \to \infty$.
\end{thm}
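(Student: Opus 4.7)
The plan is to reduce both halves of the theorem to a zero-one law for an expansion of the language in which the colouring is made explicit, and then to eliminate the colour predicates. Let $L_{rel}^+$ denote the expansion of $L_{rel}$ by new unary predicates $P_1, \ldots, P_l$, and let $\mbC_n^+$ (respectively $\mbS_n^+$) be the set of $L_{rel}^+$-structures $\mcM^+$ such that $\mcM^+ \uhrc L_{rel} \in \mbC_n$ (respectively $\mbS_n$) and the $P_i$ pick out the colour classes of a valid (strong) $l$-colouring of the $1$-dimensional subspaces. Put the natural uniform measure $\delta_n^{\mbC,+}$ on $\mbC_n^+$; by construction the pushforward along $\mcM^+ \mapsto \mcM^+ \uhrc L_{rel}$, weighted by the number of colourings, is exactly the two-stage measure $\delta_n^\mbC$, and similarly for $\delta_n^{\mbS,+}$ and $\delta_n^\mbS$.

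First I would prove a zero-one law for $\delta_n^{\mbC,+}$ by the extension-axiom method. For each isomorphism type of a ``small'' closed $L_{rel}^+$-substructure $\mcA$ and each admissible extension $\mcA \subseteq \mcB$ — where admissibility means $\mcB$ is still $l$-coloured and sits over $\mcA$ in the expected geometric way — write the axiom asserting that every closed copy of $\mcA$ extends to a copy of $\mcB$. Polynomial $k$-saturation of $\mbG$ supplies, for $n$ large, many algebraically independent candidate sites in $\mcG_n$ on which to try to realise $\mcB$; conditional on the random colouring, each candidate extension is admissible and each potential new relationship is present independently with a probability bounded away from $0$ and $1$. A Chernoff-type bound then makes the failure probability at any given base copy of $\mcA$ decay superpolynomially in the dimension, and a union bound over the polynomially many base copies proves that each extension axiom holds $\delta_n^{\mbC,+}$-almost surely. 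A back-and-forth among countable models of the resulting ``almost sure theory'' — using both the definable closure operator $\cl$ and the predicates $P_i$ to shepherd colours and geometry together — yields $\omega$-categoricity, hence completeness, hence a zero-one law on $\mbC_n^+$.

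Next I would remove the colour predicates. Using the extension axioms one shows that in $\delta_n^{\mbC,+}$-almost every structure the $l$-colouring is unique up to permuting the $P_i$, because any two essentially different colourings would be separated by a colour-rigid configuration guaranteed to exist by an extension axiom. It follows that there is an $L_{rel}$-formula $\xi(x,y)$ — built from the absence of certain small ``colour-forcing'' configurations connecting $x$ and $y$ — which defines the ``same-colour'' equivalence relation asymptotically almost surely. Any $L_{rel}^+$-sentence can then be rewritten, modulo the almost-sure theory, into an $L_{rel}$-sentence by replacing each $P_i(x)$ with the appropriate $\xi$-class membership expressed through fixed representatives (chosen using further extension axioms). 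Since $\delta_n^\mbC$ is the pushforward of $\delta_n^{\mbC,+}$, the zero-one law transfers from $\mbC_n^+$ to $\mbC_n$; the argument for $\mbS_n$ is entirely parallel, with admissibility adjusted to the strong-colouring constraint.

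The main obstacle is verifying that the extension axioms hold with probability tending to $1$ in this geometric setting. Because cardinality is replaced by dimension, one must ensure that polynomial $k$-saturation supplies enough algebraically independent candidate extension sites over an arbitrary closed base for the Chernoff estimate to beat the union bound, while simultaneously respecting both the pre-existing $L_{pre}$-structure and the random colour assignment. A secondary subtlety is choosing the right ``extension type'': it must specify a relation pattern, a geometric placement of the new points with respect to the closure operator, and a compatible colouring, in just enough detail that the collection of such types supports a back-and-forth system in the limit theory.
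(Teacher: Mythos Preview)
Your first stage --- a zero-one law for the expanded language with colour predicates, obtained via extension axioms and polynomial $k$-saturation --- matches the paper's strategy (the paper reduces this to Theorem~7.32 of \cite{Kop12} after checking, in Lemma~\ref{acceptance of substitutions by l-colourable structures}, that $\mbK$ and $\mb{SK}$ ``accept $k$-substitutions''). So far, so good.

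The second stage, however, is both an unnecessary detour and a genuine gap under the stated hypotheses. You propose to eliminate the colour predicates by first proving almost-sure uniqueness of the colouring and then exhibiting an $L_{rel}$-formula $\xi(x,y)$ that defines the same-colour relation, so that every $L_{rel}^+$-sentence can be translated into $L_{rel}$. But Theorem~\ref{zero-one law} is a statement about $L_{rel}$-sentences $\varphi$ only: such $\varphi$ never mention the $P_i$, so for any $\mcM \in \mbK_n$ one has $\mcM \models \varphi$ if and only if $\mcM \uhrc L_{rel} \models \varphi$. By the very definition of $\delta_n^\mbC$ this gives $\delta_n^\mbC(\varphi) = \delta_n^\mbK(\varphi)$, and the zero-one law for $\mbK_n$ transfers immediately. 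No translation, no $\xi$, no uniqueness argument is needed; this is exactly the proof of Corollary~\ref{0-1 law for l-colourable structures} in the paper.

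More seriously, your route does not work under the hypotheses of Theorem~\ref{zero-one law} alone. The construction of $\xi(x,y)$ and the almost-sure uniqueness of colourings are \emph{not} consequences of polynomial $k$-saturation by itself: in the paper they require the additional assumptions of Theorems~\ref{definability of colourings} and~\ref{definability of strong colourings} (vector/affine/projective spaces and the Graham--Leeb--Rothschild theorem in the $l$-colourable case; the condition $t \geq 2$ in the strongly $l$-colourable case). So your proposed reduction would, at best, prove a weaker theorem.
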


\noindent
If $F$ is a field and $G$ is the set of vectors of a vector space or of an affine space over $F$, or if $G$ is the
set of lines of a projective space over $F$, then $(G, \cl)$ where $\cl$ is the linear closure operator,
affine closure operator, or projective closure operator, respectively, forms a pregeometry
(see for example \cite{Oxl} or \cite{Mar}).

\begin{thm}\label{definability of colourings}
Suppose that the conditions of Assumption~\ref{assumptions on languages} hold
and that for some finite field $F$ one of the following three cases holds for every $n \in \mbbN$:
$G_n$ is an (a) $n$-dimensional vector space, or (b) $n$-dimensional affine space, or
(c) $n$-dimensional projective space, over $F$, and $\cl_{\mcG_n}$ is the linear, affine or 
projective closure operator on $G_n$, respectively.
Moreover, assume that $L_{pre}$ is the generic language $L_{gen}$ from 
Example~\ref{generic example of first-order pregeometry},
with the intepretations of symbols given in that example.\\
(i) There is an $L_{rel}$-formula $\xi(x,y)$ such that the $\delta_n^\mbC$-probability that
the following holds for $\mcM \in \mbC_n$ approaches 1 as $n \to \infty$:
\begin{itemize}
\item[] For all $a,b \in M - \cl_\mcM(\es)$, $\mcM \models \xi(a,b)$ if and only if
every $l$-colouring of $\mcM$ gives $a$ and $b$ the same colour.
\end{itemize}
(ii) $\lim_{n\to\infty} 
\delta_n^\mbC\big(\{\mcM \in \mbC_n : \text{ $\mcM$ has a unique $l$-colouring}\}\big) = 1$.\\
(iii) $\lim_{n\to\infty} 
\delta_n^\mbC\big(\{\mcM \in \mbC_n : \text{ $\mcM$ is not $l'$-colourable if $l' < l$}\}\big) = 1$.\\
(iv) The set $\{\varphi \in L_{rel} : \lim_{n\to\infty} \delta_n^\mbC(\varphi) = 1 \}$
forms a countably categorical theory which
can be explicitly axiomatised (as in Section~\ref{cc}) by $L_{rel}$-sentences of the
form $\forall \bar{x}\exists \bar{y} \psi(\bar{x}, \bar{y})$ where $\psi$ is quantifier-free, 
mainly in terms of what we call {\rm $l$-colour compatible extension axioms},
which involve the formula $\xi(x,y)$ from part~(i).\\
(v) The statements~(i)--(iv) hold if we assume that, for each $n \in \mbbN$,
$G_n$ is an $n$-dimensional vector space over $F$, $\cl_{\mcG_n}$ is the linear closure
operator on $G_n$ and that $L_{pre}$ is the language $L_F$
from Example~\ref{example of vector space over finite field}, with the interpretation of symbols from that example.
\end{thm}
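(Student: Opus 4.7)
The plan is to use the zero-one law (Theorem~\ref{zero-one law}) as the engine, and then focus on exhibiting the formula $\xi$ and the almost-sure structural features of a random $\mcM \in \mbC_n$. I would start with the language $L_{gen}$ case and carry (v) through at the end by noting that $L_F$ and $L_{gen}$ generate the same closure operator and the same $L_{rel}$-definable sets on each $\mcG_n$, so that the almost-sure theory is unchanged.

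For part~(i), the candidate formula is an $l$-clique witness: pick any relation symbol $R \in L_{rel} \setminus L_{pre}$ of arity $r$, and let $\xi(x,y)$ assert the existence of elements $z_1,\ldots,z_{l-1}$, pairwise and jointly linearly independent of $x$ and $y$ (expressible using the closure formulas guaranteed by Assumption~\ref{assumptions on languages}), such that every $r$-subset of $\{x,z_1,\ldots,z_{l-1}\}$ that is forced to have $l$ distinct 1-dimensional subspaces satisfies $R$, and likewise for $\{y,z_1,\ldots,z_{l-1}\}$. In any $l$-colouring, such a configuration forces $x,y$ to receive the colour not used by $z_1,\ldots,z_{l-1}$, giving the ``only if'' direction. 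Conversely, I would argue via extension axioms that are true almost surely: given $x,y$ outside $\cl_\mcM(\es)$ of the same colour in \emph{some} $l$-colouring, the random procedure produces, with probability $\to 1$, witnesses $z_i$ of the remaining $l-1$ colours together with the needed $R$-tuples. This is the standard ``extension property'' argument adapted to the pregeometric setting, using polynomial $k$-saturation of $\mbG$ to guarantee enough elements in generic position over any fixed tuple.

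For parts~(ii) and~(iii), I would leverage (i). Once $\xi$ defines the equivalence ``same colour in every $l$-colouring'' almost surely, the partition of 1-dimensional subspaces into colour classes is determined by the structure alone, so the only remaining freedom is relabelling colours, proving (ii). For (iii), fix any two of the $l$ generic colour classes $C,C'$; by an extension argument the random $\mcM$ almost surely contains an $R$-tuple whose elements span 1-dimensional subspaces lying entirely in $C \cup C'$ with at least one of each. Any putative $l'$-colouring with $l' < l$ must, by pigeonhole, merge two of the original classes, and such a merge turns one of these tuples monochromatic, a contradiction. For~(iv), I would write down the usual $\forall\bar x\exists\bar y$ extension axioms describing every quantifier-free type over a finite configuration that is ``$l$-colour compatible'' with the partition defined by $\xi$, and verify by a back-and-forth that any two countable models of these axioms in which the underlying pregeometry is the expected countable-dimensional space are isomorphic; categoricity in power $\aleph_0$ then yields completeness, so this axiomatization captures exactly the almost-sure theory.

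The main obstacle will be part~(i) — specifically, showing that almost surely $\xi(a,b)$ \emph{fails} when $a,b$ lie in different colour classes. One must rule out ``accidental'' $l$-clique witnesses formed by elements of colours that happen to coincide with the colours of $a$ and $b$. Here polynomial $k$-saturation of $\mbG$ is essential: it allows sufficiently many independent trials in disjoint closed subspaces so that a Borel--Cantelli style estimate, together with the independence built into $\delta_n^\mbC$ once the colouring is fixed, shows that the probability of the unwanted witness configuration decays fast enough across a union bound over all candidate $(l-1)$-tuples $\bar z$. Once (i) is in place, parts~(ii)--(v) follow along the lines above, with~(v) requiring only the observation that the hypotheses of~(i)--(iv) are formulated so as to be invariant under passing between interdefinable languages for the same pregeometry.
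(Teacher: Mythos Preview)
Your clique-witness formula is essentially the construction the paper uses for \emph{strongly} $l$-coloured structures (Section~\ref{sc}), but it does not work for the weak $l$-colourings of Theorem~\ref{definability of colourings}. The point of failure is your claim that ``such a configuration forces $x,y$ to receive the colour not used by $z_1,\ldots,z_{l-1}$''. In the weak setting, $\mcM \models R(\bar a)$ only says that $\cl(\bar a)$ is \emph{multichromatic} (Definition~\ref{coldef}(4)): some two 1-dimensional subspaces of $\cl(\bar a)$ have different colours, not that any two \emph{specified} coordinates do. In a vector, projective, or affine space over $F$, the 2-dimensional closure $\cl(x,z_i)$ contains $|F|+1$ one-dimensional subspaces, so $R(x,z_i,\ldots)$ is perfectly compatible with $x$ and $z_i$ having the same colour, provided some other line in that plane is coloured differently. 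Hence your $z_i$'s need not use $l-1$ distinct colours, and nothing forces $x,y$ to share a colour. This is not a probabilistic issue to be fixed by Borel--Cantelli: property~(1) of Theorem~\ref{main theorem, general form} requires that $\xi(a,b)$ imply ``same colour'' in \emph{every} $l$-coloured structure, deterministically.

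The paper's route around this is genuinely different. It invokes the Graham--Leeb--Rothschild Ramsey theorem (Theorem~\ref{ramseythm}) to build a specific $L_{rel}$-structure $\mcB$ on a space of dimension $N(2,l)$: one fixes a carefully chosen colouring $c_0$, lists its maximal monochromatic subspaces $W_1,\ldots,W_m$ of dimension $\geq 2$, and lets $R$ hold on a tuple exactly when it is not contained in any $W_i$ and is not 1-dimensional. The formula $\xi_0(x,y)$ says ``there is an isomorphic copy of $\mcB$ placing $x,y$ at two designated independent points of $W_1$''. The key Lemma~\ref{wl1} shows that \emph{any} embedding of $\mcB$ into an $l$-coloured $\mcM$ induces a bijection between the $W_i$ and the maximal monochromatic subspaces of the image, so the images of those two designated points land in a monochromatic subspace and hence share a colour. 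The minimality in the choice of $c_0$ (minimising $\dim\bigcup W_i$, then maximising $m$) is what makes this bijection argument go through. The converse (same colour $\Rightarrow \xi$) is then obtained via extension properties, as you suggest. Your outline for (ii)--(iv) is broadly in line with Section~\ref{cc}, and your remark for (v) is fine, but none of this can proceed until (i) is repaired along Ramsey-theoretic lines.
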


\noindent
The assumptions of Theorem~\ref{definability of colourings} imply the assumptions
of Theorem~\ref{definability of strong colourings}, which is explained in 
Example~\ref{examples of polynomially k-saturated pregeometries}.
That is, when dealing with strongly $l$-colourable structures, the assumptions on
the underlying pregeometries can be weaker.
By a subspace of a pregeometry we mean a closed set with respect to the given closure operator
(Definition~\ref{spredef}).

\begin{thm}\label{definability of strong colourings}
Suppose that the conditions of Assumption~\ref{assumptions on languages} hold
and that $\mbG$ is polynomially $k$-saturated for every $k \in \mbbN$.
Also assume that for every $n \in \mbbN$, every 2-dimensional subspace of $\mcG_n$ has at most~$l$ different
1-dimensional subspaces.\\
(i) There is an $L_{rel}$-formula $\xi(x,y)$ such that the $\delta_n^\mbS$-probability that
the following holds for $\mcM \in \mbS_n$ approaches 1 as $n \to \infty$:
\begin{itemize}
\item[] For all $a,b \in M - \cl_\mcM(\es)$, $\mcM \models \xi(a,b)$ if and only if
every $l$-colouring of $\mcM$ gives $a$ and $b$ the same colour.
\end{itemize}
(ii) $\lim_{n\to\infty} 
\delta_n^\mbS\big(\{\mcM \in \mbS_n : \text{ $\mcM$ has a unique strong $l$-colouring}\}\big) = 1$.\\
(iii) $\lim_{n\to\infty} 
\delta_n^\mbS\big(\{\mcM \in \mbS_n : \text{ $\mcM$ is not strongly $l'$-colourable if $l' < l$}\}\big) = 1$.\\
(iv) Suppose, moreover, that the formulas of $L_{pre}$ which,
according to Assumption~\ref{assumptions on languages}, define the pregeometry
$\mbG = \{\mcG_n : n \in \mbbN\}$ are quantifier-free.
Then the set $\{\varphi \in L_{rel} : \lim_{n\to\infty} \delta_n^\mbS(\varphi) = 1 \}$
forms a countably categorical theory which
can be explicitly axiomatised (as in Section~\ref{cc}) by $L_{rel}$-sentences of the
form $\forall \bar{x}\exists \bar{y} \psi(\bar{x}, \bar{y})$ where $\psi$ is quantifier-free, 
mainly in terms of what we call {\rm $l$-colour compatible extension axioms},
which involve the formula $\xi(x,y)$ from part~(i).
\end{thm}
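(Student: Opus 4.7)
The plan is to follow the strategy of \cite{Kop12}, using Theorem~\ref{zero-one law} as a black box. The key technical ingredient is a family of \emph{$l$-colour compatible extension axioms} of the form $\forall \bar{x}\,\exists \bar{y}\,\psi(\bar{x}, \bar{y})$ with $\psi$ quantifier-free, describing how a finite configuration can be extended in a way consistent with strong $l$-colourability. My first step is a combinatorial counting argument showing that each such axiom has $\delta_n^\mbS$-probability tending to $1$. Polynomial $k$-saturation guarantees enough candidate witnesses at every dimension, while the 2-dimensional hypothesis ensures that the local colour pattern required by the extension fits within the $l$ available colours, keeping the relevant conditional probabilities bounded below.

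Parts (ii) and (iii) will then follow essentially directly from these extension axioms. For (iii), given $l' < l$, the axioms force the existence of an $R$-tuple whose closure is a 2-dimensional subspace whose $\leq l$ 1-dimensional subspaces are all occupied, so that any strong colouring is forced to use the full palette of $l$ colours, ruling out strong $l'$-colourability. For (ii), the axioms force the colour-class partition of $M - \cl_\mcM(\es)$ to be uniquely determined by the $L_{rel}$-structure, so any two strong $l$-colourings of $\mcM$ differ only by a permutation of $\{1, \ldots, l\}$. For (i), I would construct $\xi(x,y)$ explicitly as a bounded first-order description of the ``local colour profile'' of a pair $(x,y)$, using a fixed finite number of existentially quantified witness elements; the extension axioms then imply that in any $\mcM$ realising them, $\xi(a,b)$ holds if and only if every strong $l$-colouring of $\mcM$ assigns $a$ and $b$ the same colour.

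Part (iv) is where the additional quantifier-free hypothesis on the pregeometry-defining formulas is used. Let $T^*$ be the $L_{rel}$-theory generated by the $L_{pre}$-theory of $\mbG$, the sentence asserting strong $l$-colourability, the extension axioms, and the characterisation of $\xi$ from~(i). By the first step every axiom in $T^*$ holds with $\delta_n^\mbS$-probability tending to $1$, so $T^*$ is contained in the almost-sure theory. Countable categoricity is then proved by a Fra\"iss\'e-style back-and-forth between any two countable models of $T^*$: the quantifier-free condition ensures that partial isomorphisms preserve pregeometric data on finite substructures without hidden obstructions, and the extension axioms supply the necessary one-point extensions at every stage. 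Completeness of $T^*$ follows, and combined with Theorem~\ref{zero-one law} this gives equality with the almost-sure theory, axiomatised in the required $\forall\bar{x}\exists\bar{y}\psi(\bar{x},\bar{y})$ form by construction.

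The main obstacle is the first step. The random structure $\mcM \in \mbS_n$ is produced in two stages --- first a uniform random colouring of the 1-dimensional subspaces of $\mcG_n$, then a uniform random choice of relations compatible with that colouring --- and the counting argument must simultaneously track which initial colourings admit a given extension and, conditional on those, which random choices of relations realise it. This is exactly where polynomial $k$-saturation (providing witness counts) and the 2-dimensional hypothesis (ensuring the required colour patterns stay within $l$ colours) both come in, and combining them demands care analogous to but more intricate than the proof of Theorem~\ref{zero-one law}.
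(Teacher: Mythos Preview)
Your overall architecture differs from the paper's in a way that makes your ``main obstacle'' unnecessarily hard. The paper does \emph{not} establish the $l$-colour compatible extension axioms by a direct counting argument on $\mbS_n$. Instead it works first with strongly $l$-\emph{coloured} structures: Lemma~\ref{acceptance of substitutions by l-colourable structures} shows that $\mb{SK}$ accepts $k$-substitutions, and then Theorem~\ref{ccv1} (imported from \cite{Kop12}) gives that the $k$-extension property for coloured structures holds with $\delta_n^{\mb{SK}}$-probability tending to $1$. The two-stage generation process you worry about is thus never analysed directly; the transfer from $\mb{SK}_n$ to $\mbS_n$ is a one-line consequence of the definition of $\delta_n^{\mbS}$.

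The order of dependencies is also reversed relative to your plan. In the paper the formula $\xi(x,y)$ is constructed \emph{first}, explicitly, in Section~\ref{sc}: it says that there exist $y_2,\ldots,y_l$ with each pair among $x,y_2,\ldots,y_l$ and among $y,y_2,\ldots,y_l$ independent and lying in an $R_1$-tuple, so pigeonhole forces $x$ and $y$ to share a colour (Lemma~\ref{strl1}). The converse (Lemma~\ref{strl2}) is where the 2-dimensional hypothesis enters: it guarantees $t\ge 2$, so one can build a strongly $l$-coloured witness structure $\mcB$ realising the pattern described by $\xi$, and then the $k_0$-extension property embeds $\mcB$ over $\cl(a,b)$. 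Only after $\xi$ is in hand does Section~\ref{cc} define the colour compatible extension axioms (which themselves use $\xi$ via the formulas $\zeta_\gamma$), and Lemma~\ref{ccl1} deduces them from the $k$-extension property for coloured structures rather than by counting. Part~(iii) is obtained not by exhibiting a specific $R$-tuple but as an immediate corollary of~(ii): an $l'$-colouring and a genuine $l$-colouring cannot be permutations of each other. Your back-and-forth for~(iv) matches the paper's Lemma~\ref{tclemma}; the point you should add is that the argument needs $\xi$ to be existential so that satisfaction of $\xi$ can be witnessed inside a finite closed substructure, which is why the quantifier-free hypothesis on the $\theta_n$ matters.
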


\noindent
It turns out that Theorem~\ref{zero-one law} follows rather straightforwardly from
Theorem~7.32 in \cite{Kop12} when we have proved
Lemma~\ref{acceptance of substitutions by l-colourable structures} below.
However, Theorem~\ref{zero-one law} in itself does not
give information about which sentences have asymptotic probability 1 (or 0), or
about properties of the theory consisting of those sentences which have asymptotic probability 1.
Neither does it tell us anything about typical properties of large (strongly) $l$-colourable structures.
In order to prove part~(i) of Theorems~\ref{definability of colourings}
and~\ref{definability of strong colourings}, which give information of this kind, 
we treat $l$-colourable structures and 
strongly $l$-colourable structures separately and need to add some assumption(s). 
The case of strong $l$-colourings is the easier one and is treated in Section~\ref{sc};
that is, most of the argument leading to part~(i) of Theorem~\ref{definability of strong colourings} is 
carried out in Section~\ref{sc}.
The main part of the proof of~(i) of Theorem~\ref{definability of colourings}, dealing with (not
necessarily strong) $l$-colourings, is carried out in Section~\ref{wc} where we
use a theorem from structural Ramsey theory by
Graham, Leeb and Rothschild \cite{GLR}.

Once we have established part~(i) of Theorems~\ref{definability of colourings}
and~\ref{definability of strong colourings}, which, as said above, is done separately,
parts~(ii)--(iv) (and (v) of Theorem~\ref{definability of colourings})
can be proved in a uniform way, that is, it is no longer necessary to distinguish 
between $l$-colourable structures and strongly $l$-colourable structures.
This is done in Section~\ref{cc}.
It is possible to read Section~\ref{cc} directly after Section~\ref{preliminaries}
and then consider the details of definability of colorings in Sections~\ref{sc} and~\ref{wc}, 
which are independent of each other.

The theorems above generalise the results of Section~9 of \cite{Kop12} to the situation
when a nontrivial pregeometry (subject to certain conditions) is present.
In other words, if the closure of a set $A$ is always $A$ (so every set is closed) and
we let $L_{pre}$ be the language whose vocabulary contains only the identity symbol `$=$',
and, for every $n \in \mbbN$, $\mcG_n$ is the unique (under these assumtions) $L_{pre}$-structure
with universe $\{1, \ldots, n+1\}$, then~(i)--(iv) of Theorems~\ref{definability of colourings}
and~\ref{definability of strong colourings} hold by results in Section~9 of \cite{Kop12}.
Theorem~\ref{zero-one law} includes this case, without reformulation.

\begin{rmk}\label{remark about symmetry of relations}
One may want to consider only $L_{rel}$-structures in which certain relation symbols from
the vocabulary of $L_{rel}$ are always interpreted as irreflexive and symmetric relations
(see beginning of Section~\ref{preliminaries}).
Theorems~\ref{zero-one law} and~\ref{definability of colourings} hold with exactly the same
proofs also in this situation. This claim uses that all results of \cite{Kop12} (see Remark~2.1
of that article) hold whether or not one assumes that certain relation symbols are always
interpreted as irreflexive and symmetric relations.
If a technical assumption is added, explained in Remark~\ref{remark about irreflexive symmetric relations},
then Theorem~\ref{definability of strong colourings} also holds in the context when some relation symbols are
always interpreted as irreflexive and symmetric relations.
\end{rmk}

\begin{rmk}
In \cite{Kop12}, results corresponding to Theorems~\ref{zero-one law}--\ref{definability of strong colourings},
in the case of trivial pregeometries (i.e. when every set is closed), where proved also for the
{\em uniform probability measure}. The proof used the fact, proved in Section~10 of \cite{Kop12},
that, when the pregeometries considered are trivial, then the probability, with the uniform
probability measure, that a random (strongly) $l$-colourable structure with $n$ elements has
an $l$-colouring with relatively even distribution of colours, approaches 1 as $n \to \infty$.
We believe that the same is true in the context of Theorems~\ref{definability of colourings}
and~\ref{definability of strong colourings} above, by proofs analogous to those
in Section~10 of \cite{Kop12}. But when the underlying pregeometries are no longer assumed
to be trivial, then this condition alone seems to be insufficient for proving analogoues of
Theorems~\ref{zero-one law}--\ref{definability of strong colourings} if $\delta_n^\mbC$ 
is replaced by the uniform probability measure on $\mbC_n$ and $\delta_n^\mbS$ is replaced
by the uniform probability measure on $\mbS_n$.
In other words, it appears to be a more difficult task to transfer the results of this
article to the uniform probability measure (if possible at all) than was the case in \cite{Kop12}.
\end{rmk}

\noindent
This article ends with a small errata to \cite{Kop12},
which makes explicit some assumptions, used implicity in Section~8 of \cite{Kop12},
but not stated explicitly in the places in Sections~7--8 of \cite{Kop12} where they are relevant.

\section{Pregeometries and (strongly) $l$-colourable structures}\label{preliminaries}

\noindent
The notation used here is more or less standard; see \cite{EF, Mar} for example.
The formal languages considered are always first-order and denoted $L$, often with a subscript.
Such $L$ denotes the set of first-order formulas over some vocabulary, also called signature, consisting
of constant-, function- and/or relation symbols.
First-order structures are denoted with calligraphic letters $\mcA, \mcB, \ldots, \mcM, \mcN, \ldots$,
and their universes with the corresponding non-calligraphic letters $A, B, \ldots, M, N, \ldots$.
If the vocabulary of a language $L$ has no constant or function symbols, then we allow an $L$-structure
to have an empty universe.
Finite sequences/tuples of objects, usually elements from structures or variables,
are denoted with $\bar{a}$, $\bar{x}$, etc. 
By $\bar{a} \in A$ we mean that every element of the sequence $\bar{a}$ belongs to the set $A$,
and $|A|$ denotes the cardinality of $A$.
A function $f : M \to N$ is called an {\bf \em embedding} of $\mcM$ into $\mcN$ if,
for every constant symbol $c$, $f(c^\mcM) = c^\mcN$, for every function symbol
$g$ and tuple $(a_1, \ldots, a_r) \in M^r$ where $r$ is the arity of $g$, 
$g^\mcN(f(a_1), \ldots, f(a_r)) = f(g^\mcM(a_1, \ldots, a_r))$, and for every relation symbol
$R$ and tuple $(a_1, \ldots, a_r) \in M^r$ where $r$ is the arity of $R$,
$\mcM \models R(a_1, \ldots, a_r)$ $\Longleftrightarrow$ $\mcN \models R(f(a_1), \ldots, f(a_r))$.
It follows that an {\bf \em isomorphism} from $\mcM$ to $\mcN$ is the same as a
surjective embedding from $\mcM$ to $\mcN$.
Suppose that $L'$ is a language whose vocabulary is included in the vocabulary of $L$.
For any $L$-structure $\mcM$, by $\mcM \uhrc L'$ we denote the reduct of $\mcM$ to $L'$.
If $\mcM$ is an $L$-structure and $A \subseteq M$, then $\mcM \uhrc A$ denotes the
substructure of $\mcM$ which is generated by the set $A$, that is, $\mcM \uhrc A$ is
the unique substructure $\mcN$ of $\mcM$
such that $A \subseteq N \subseteq M$ and if $\mcN' \subseteq \mcM$ and $A \subseteq N' \subseteq M$,
then $\mcN \subseteq \mcN'$. A third meaning of the symbol `$\upharpoonright$' with respect to structures is
given by Definition~\ref{definition of d-dimensional reduct}.
Suppose that $A$ is a set, $n \geq 2$ and $R \subseteq A^n$ an $n$-ary relation on $A$.
We call $R$ {\bf \em irreflexive} if $(a_1, \ldots, a_n) \in R$ implies that $a_i \neq a_j$ whenever
$i \neq j$. We call $R$ {\bf \em symmetric} if $(a_1, \ldots, a_n) \in R$ implies that
$(\pi(a_1), \ldots, \pi(a_r)) \in R$ for every permutation $\pi$ of $\{a_1, \ldots, a_n\}$.
For any set $A$, $\mcP(A)$ denotes the power set of $A$.
A usual, we call a formula {\bf \em existential} if it has the form 
\[\exists y_1, \ldots, y_m \varphi(x_1, \ldots, x_k, y_1, \ldots, y_m)\]
where $\varphi$ is quantifier free.

\begin{defi}\label{definition of pregeometry}
We say that $(A,\cl)$, with $\cl : \mathcal{P}(A)\rightarrow\mathcal{P}(A)$ is a {\bf \em pregeometry} 
(also called {\em matroid}) if it satisfies the following for all $X, Y \subseteq A$:
\begin{enumerate}
\item (Reflexivity)  $X\subseteq \cl(X)$.
\item (Monotonicity) $Y\subseteq \cl(X) \Rightarrow \cl(Y)\subseteq \cl(X)$.
\item (Exchange property) If $a,b\in A$ then $a\in \cl(X\cup\{b\}) - \cl(X)\Rightarrow b\in \cl(X\cup\{a\})$.
\item (Finite Character) $\cl(X) = \bigcup \{\cl(X_0) : X_0\subseteq X \text{ and } |X_0| \text{ is finite}\}$.
\end{enumerate}
\end{defi}

\noindent
If $X,Y \subseteq A$ then we say that $X$ is {\bf \em independent from $Y$} if 
$\cl(X) \cap \cl(Y) = \cl(\es)$.
From the exchange property it follows that $X$ is indepependent from $Y$ if and only
if $Y$ is independent from $X$ (symmetry of independence).
We will often write $\cl(a_1, \ldots, a_n)$ instead of $\cl(\{a_1, \ldots, a_n\})$ and
say `$a$ is independent from $b$' instead of `$\{a\}$ is independent from $\{b\}$ over $\es$'. 
We say that a set $X$ is {\bf \em independent} if for, each $a\in X$, we have that $\{a\}$ is independent from $X-\{a\}$.  
We say that a set $X\subseteq A$ is {\bf \em closed} (in $(A, \cl)$) if $\cl(X)=X$. 
For $X \subseteq A$, the dimension of $X$ is defined as 
$\dim(X) = \inf\{|Y| : Y \subseteq X \text{ and } X \subseteq \cl(Y)\}$.
For more about pregeometries the reader is refered to \cite{Mar, Oxl} for example.
We will use the following lemma, which has probably been proved somewhere, but for
the sake of completeness we give a proof of it here. 

\begin{lma}~\label{pregl1}
Let $\A=(A,\cl)$ be a pregeometry. 
If $\{a,v_1,...,v_m,w_1,...,w_n\}\subseteq A$ is an independent set then $\cl(a,v_1,...,v_m) \cap \cl(a,w_1,...,w_n) = \cl(a)$
\end{lma}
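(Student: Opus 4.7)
The inclusion $\cl(a) \subseteq \cl(a, v_1, \ldots, v_m) \cap \cl(a, w_1, \ldots, w_n)$ is immediate: monotonicity (applied to $\{a\} \subseteq \{a, v_1, \ldots, v_m\}$, resp.\ $\{a\} \subseteq \{a, w_1, \ldots, w_n\}$) gives $\cl(a)$ inside each of the two closures. The work is in the reverse inclusion, and my plan is to prove it by contradiction using the exchange property together with the hypothesis that the whole set is independent.

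So suppose for contradiction that there exists $x \in \cl(a, v_1, \ldots, v_m) \cap \cl(a, w_1, \ldots, w_n)$ with $x \notin \cl(a)$. Since $x \in \cl(a, v_1, \ldots, v_m)$ but $x \notin \cl(a)$, I can choose a subset $I \subseteq \{1, \ldots, m\}$ of minimal cardinality such that $x \in \cl(\{a\} \cup \{v_i : i \in I\})$; by the previous line $I$ is nonempty, so pick some $j \in I$ and set $I' = I \setminus \{j\}$. Minimality of $I$ gives $x \notin \cl(\{a\} \cup \{v_i : i \in I'\})$, and then the exchange property yields
\[ v_j \in \cl\bigl(\{a, x\} \cup \{v_i : i \in I'\}\bigr). \]

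Now I feed in the information from the other side. Since $x \in \cl(a, w_1, \ldots, w_n)$, monotonicity gives $\cl(\{a, x\} \cup \{v_i : i \in I'\}) \subseteq \cl(\{a, w_1, \ldots, w_n\} \cup \{v_i : i \in I'\})$, so
\[ v_j \in \cl\bigl(\{a, w_1, \ldots, w_n\} \cup \{v_i : i \in I'\}\bigr). \]
This contradicts the independence of $\{a, v_1, \ldots, v_m, w_1, \ldots, w_n\}$, because $j \notin I'$ means $v_j$ is not among the generators on the right-hand side, so it ought to lie outside their closure. Hence no such $x$ exists, giving the desired inclusion.

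The only real subtlety I anticipate is the step selecting a minimal $I$ together with an element $j \in I$ and then applying exchange correctly; one must be careful to note that $I$ is nonempty (which follows from $x \notin \cl(a)$) and that $v_j$ is genuinely absent from the closure on the right in the final line (which follows from the fact that any subset of an independent set is independent, so $v_j \notin \cl(\{a, w_1, \ldots, w_n\} \cup \{v_i : i \in I'\})$). Everything else is a routine use of monotonicity and reflexivity from Definition~\ref{definition of pregeometry}.
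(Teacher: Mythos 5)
Your proof is correct, but it takes a genuinely different route from the paper's. The paper proves the nontrivial inclusion by induction on $n$: in the induction step it applies the exchange property to conclude $w_{n+1}\in\cl(a,w_1,\ldots,w_n,x)$ and then reaches a contradiction via a dimension count, computing $\dim(a,v_1,\ldots,v_m,w_1,\ldots,w_{n+1})$ two ways as $1+m+n$ and $1+m+n+1$. You avoid both the induction and the dimension computation: choosing a minimal $I$ with $x\in\cl(\{a\}\cup\{v_i:i\in I\})$, applying exchange once to obtain $v_j\in\cl(\{a,x\}\cup\{v_i:i\in I'\})$, and then eliminating $x$ via $x\in\cl(a,w_1,\ldots,w_n)$ and monotonicity, you land $v_j$ in the closure of the other elements of the independent set, which is the contradiction. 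This is more direct and a bit more elementary, and it generalises just as well; the paper's induction buys nothing extra here. One small point worth noting: under the paper's literal definition of independence (each singleton is independent \emph{from} the rest, i.e.\ the two closures meet in $\cl(\es)$), the conclusion $v_j\in\cl(\{a,w_1,\ldots,w_n\}\cup\{v_i:i\in I'\})$ by itself only yields $v_j\in\cl(\es)$ rather than an outright contradiction. Your argument still closes, because minimality of $I$ gives $x\in\cl(\{a\}\cup\{v_i:i\in I\})$ but $x\notin\cl(\{a\}\cup\{v_i:i\in I'\})$, which forces $v_j\notin\cl(\{a\}\cup\{v_i:i\in I'\})\supseteq\cl(\es)$; this is the same implicit nondegeneracy that the paper itself uses when it equates the dimension of an independent set with its cardinality, so you are not assuming more than the original proof does.
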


\begin{proof}
Suppose that $\{a,v_1,...,v_m,w_1,...,w_n\}\subseteq A$ is an independent set.
By reflexivity $a\in \cl(a,v_1,...,v_m) \cap \cl(a,w_1,...,w_n)$ and so by monotonicity 
\[ \cl(a) \ \subseteq \ \cl(a,v_1,...,v_m)\cap \cl(a,w_1,...,w_n).\]
For the opposite direction we assume that $x\in \cl(a,v_1,...,v_m)\cap \cl(a,w_1,...,w_{n})$ and use induction over $n$ to prove that $x\in \cl(a)$. 

{\em Base case}: If $n=0$ then $\cl(a,w_1,...,w_n) = \cl(a)$ so, as $x \in \cl(a)$, we are done.

{\em Induction step}: Suppose that $x\in \cl(a,v_1,...,v_m)\cap \cl(a,w_1,...,w_{n+1})$, so we have two cases to consider:
\begin{align*}
&\text{either } \ &x \ \in \  \big(\cl(a,v_1,...,v_m) \ \cap \ \cl(a,w_1,...,w_{n+1})\big) \\ 
& &- \ 
\big(\cl(a,v_1,...,v_m) \ \cap \ \cl(a,w_1,...,w_{n})\big),\\
&\text{or } &x \ \in \  \cl(a,v_1,...,v_m) \ \cap \ \cl(a,w_1,...,w_{n}).
\end{align*}
In the first case we get the consequence that $x\in \cl(a,w_1,...,w_{n+1}) - \cl(a,w_1,...,w_n)$ and hence by the exchange property we get that $w_{n+1} \in \cl(a,w_1,...,w_n,x)$. We already know that $x\in \cl(a,v_1,...,v_m)$ and
by also using the assumption that $\{a,v_1,...,v_m,w_1,...,w_n\}$ is independent we get that
\[1+m+n = \dim(a,v_1,...,v_m,w_1,...,w_n) = \dim(a,v_1,...,v_m,w_1,...,w_n,x) =\]\[ \dim(a,v_1,...,v_m,w_1,...,w_n,w_{n+1},x) = \dim(a,v_1,...,v_m,w_1,...,w_{n+1}) = 1+m+n+1,\]
so $1 + m + n = 1 + m + n + 1$, a contradiction.
Hence, 
\[x \ \in \ \cl(a,v_1,...,v_m) \ \cap \ \cl(a,w_1,...,w_{n}),\]
so by the induction hypothesis we get that $x \in \cl(a)$.

By induction we conclude that $\cl(a,v_1,...,v_m) \ \cap \ \cl(a,w_1,...,w_{n}) \subseteq \cl(a)$ holds
for all $n$, which finishes the proof.
\end{proof}

\noindent 
We will consider first-order structures $\mcM$ for which there is
a closure operator $\cl$ on $M$ such that $(M, \cl)$ is a pregeometry and, for each $n$, the relation 
$x_{n+1} \in \cl(x_1, \ldots, x_n)$ is definable by a first-order formula without parameters.
More precisely, we have the following definition.

\begin{defi}~\label{spredef}
(i) We say that an $L$-structure $\A$ is a {\bf \em pregeometry} if there are $L$-formulas
$\theta_n(x_1, \ldots, x_{n+1})$, for all $n \in \mbbN$, such that if the operator 
$\cl_{\mcA} : \mcP(A) \to \mcP(A)$ is defined by~(a) and~(b) below, then $(A, \cl_{\mcA})$ is a pregeometry:
\begin{itemize}
\item[(a)] For every $n \in \mbbN$, every sequence $b_1, \ldots, b_n \in A$ and every $a \in A$, 
\[a \in \cl_{\mcA}(b_1, \ldots, b_n) \Longleftrightarrow
\mcA \models \theta_n(b_1, \ldots, b_n, a).\]
\item[(b)] For every $B \subseteq A$ and every $a \in A$, $a \in \cl_{\mcA}(B)$
if and only if $a \in \cl_{\mcA}(b_1, \ldots, b_n)$ for some $b_1, \ldots, b_n \in B$.
\end{itemize}
(ii) Suppose that $\A$ is a pregeometry in the sense of the above definition.
Then, for every $B \subseteq A$,
$\dim_\A(B)$ denotes the {\bf \em dimension} of $B$ with respect to the closure operator $\cl_{\mcA}$.
In other words, $\dim_{\mcA}(B) = \min\{|B'| : B' \subseteq B \text{ and } \cl_{\mcA}(B') \supseteq B\}$. 
We sometimes abbreviate $\dim_\mcB(B)$ with $\dim(\mcB)$.
A closed subset of $A$ is also called a {\bf \em subspace} of $\mcA$.
A substructure $\B \subseteq \A$ is called {\bf \em closed} if its universe $B$ is closed in $(A, \cl_\mcA)$.\\
(iii) Suppose that $\mbG$ is a set of $L$-structures. 
We say that $\mbG$ is a {\bf \em pregeometry} if 
there are $L$-formulas $\theta_n(x_1, \ldots, x_{n+1})$, for all $n \in \mbbN$,
such that for each $\A \in \mbG$, $(A, \cl_{\mcA})$ is a pregeometry if 
$\cl_{\mcA}$ is defined by~(a) and~(b).
\end{defi}

\noindent
It may happen that for an $L$-structure $\mcA$ there are $L$-formulas
$\theta_n$ and $\theta'_n$, for $n \in \mbbN$, such that the 
sequence $\theta_n$, $n \in \mbbN$, defines a different pregeometry on $A$
(according to Definition~\ref{spredef}~(i)) than does the sequence $\theta'_n$, $n \in \mbbN$.
When we use these notions it will, however, be clear that we fix a
sequence of formulas $\theta_n$, $n \in \mbbN$, and the pregeometry that they
define on each structure from a given set, which will be denoted $\mbG$.

\begin{exa}\label{generic example of first-order pregeometry}
({\bf Generic example})
Every pregeometry $(A, \cl)$ can be viewed as a first-order structure $\mcA$ in the following way.
For every $n \in \mbbN$, let $P_n$ be an $(n+1)$-ary relation symbol and let
the vocabulary of $L_{gen}$ be $\{P_n : n \in \mbbN\}$.
For every $n \in \mbbN$ and every $(a_1, \ldots, a_{n+1}) \in A^{n+1}$,
let $(a_1, \ldots, a_{n+1}) \in (P_n)^{\mcA}$ if and only if 
$a_{n+1} \in \cl(a_1, \ldots, a_n)$.
Then $\mcA$ is a pregeometry in the sense of Definition~\ref{spredef}~(i)
and $\cl = \cl_{\mcA}$.
It follows that every set of pregeometries $\mbG$, viewed as $L_{gen}$-structures
is a pregeometry in the sense of Definition~\ref{spredef}~(iii).
\end{exa}

\begin{exa}\label{example of trivial geometries}
({\bf Trivial pregeometries})
If $A$ is a set and $\cl(B) = B$ for every $B \subseteq A$,
then $(A, \cl)$ is a pregeometry, called a {\bf \em trivial preometry}.
Let $L_\es$ be the language with vocabulary $\es$, so $L_\es$ can only express
whether elements are identical or not. 
If, for $n > 0$, $\theta_n(x_1, \ldots, x_{n+1})$ denotes a formula
which expresses that ``$x_{n+1}$ is identical to one of $x_1, \ldots, x_n$'',
and $\theta_0(x_1)$ is some formula which can never be satisfied,
then every $L_\es$-structure is a pregeometry in the sense of
Definition~\ref{spredef}~(i).
Moreover, every set $\mbG$ of $L_\es$-structures is a pregeometry in the sense of 
Definition~\ref{spredef}~(iii).
\end{exa}

\begin{exa}\label{example of vector space over finite field}
({\bf Vector spaces over a finite field})
Let $F$ be a field. Let $L_F$ be the language with vocabulary
$\{0, +\} \cup \{f : f \in F\}$, where $0$ is a constant symbol, $+$ a binary
function symbol and each $f \in F$ represents a unary function symbol.
Every vector space over $F$ can be viewed as an $L_F$-structure by
interpreting $0$ as the zero vector, $+$ as vector addition and each
$f \in F$ as scalar multiplication by $f$.
Now add the assumption that $F$ is {\em finite}.
If, for every $n \in \mbbN$, $\theta_n(x_1, \ldots, x_{n+1})$ is an $L_F$-formula that
expresses that ``$x_{n+1}$ belongs to the linear span of $x_1, \ldots, x_n$'', then
every $F$-vector space $\mcV$, viewed as an $L_F$-structure,
is a pregeometry accordning to Definition~\ref{spredef}~(i).
In particular, every set $\mbG$ of vector spaces over a finite field $F$, viewed
as $L_F$-structures, is a pregeometry according to 
Definition~\ref{spredef}~(iii).
\end{exa}

\begin{defi}\label{definition of uniformly bounded}{\rm
We say that the pregeometry $\mbG = \{\mcG_n : n \in \mbbN\}$ is 
{\bf \em uniformly bounded} if there is a function $f : \mbbN \to \mbbN$
such that for every $n \in \mbbN$ and every $X \subseteq G_n$,
$\big|\cl_{\mcG_n}(X)\big| \leq f\big(\dim_{\mcG_n}(X)\big)$.
}\end{defi}

\begin{exa}\label{uniform boundedness of vector spaces etc}
({\bf Vector space pregeometries})
Let $\mbG = \{\mcG_n : n \in \mbbN\}$ is a pregeometry.
Suppose that, for every $n \in \mbbN$, 
$(G_n, \cl_{\mcG})$ is isomorphic (as a pregeometry) with $(V_n, \cl_{\mcV_n})$
where each $\mcV_n$ is a vector space of dimension $n$ over a (fixed) finite field $F$ and $\cl_{\mcV_n}$  
is linear span in $\mcV_n$. Then $\mbG = \{\mcG_n : n \in \mbbN\}$ is uniformly bounded.
We get the same conclusion if, instead, each $\mcV_n$ is a projective
space over $F$ with dimension $n$, or if each $\mcV_n$ is an affine space over $F$ 
with dimension $n$.
\end{exa}

\begin{exa}\label{example of linear closure in R-n}
({\bf Sub-pregeometries of $\mbbR^n$})
Let $\cl_n$ denote the linear closure operator in $\mbbR^n$.
It is straightforward to verify that whenever $X_n \subseteq \mbbR^n$ and
$\cl'_n$ is defined by $\cl'_n(A) = \cl_n(A) \cap X_n$ for every $A \subseteq X_n$,
then $(X_n, \cl'_n)$ is a pregeometry.
For every positive integer $n$ choose {\em finite} $X_n \subseteq \mbbR^n$
and, for all $n \in \mbbN$, let $\mcG_n = (X_{n+1}, \cl'_{n+1})$.
Let $L_{gen}$ be the language from Example~\ref{generic example of first-order pregeometry}.
Then each $\mcG_n$ can be viewed as a first-order structure in the way explained
in that example.
It follows that $\mbG = \{\mcG_n : n \in \mbbN\}$ is a pregeometry in the sense of 
Definition~\ref{spredef}~(iii).
Suppose that, in addition, the choice of each $X_n$ is made in such a way that for every $k > 0$
there is $m_k$ such that
if $n > 0$ and $a_1, \ldots, a_k \in X_n$, 
then $|\cl(a_1, \ldots, a_k) \cap X_n| \leq m_k$.
Then $\mbG$ is uniformly bounded.
\end{exa}

\begin{defi}\label{definition of polynomial k-saturation for classes}{\rm
Let $k \in \mbbN$.
We say that the pregeometry  $\mbG = \{\mcG_n : n \in \mbbN\}$ is {\bf \em polynomially $k$-saturated}
	if there are a sequence of natural numbers $(\lambda_n : n \in \mbbN)$ with
	$\lim_{n \to \infty}\lambda_n = \infty$ and a polynomial
	$P(x)$ such that for every $n \in \mbbN$:
	\begin{itemize}
		\item[(1)] $\lambda_n \leq |G_n| \leq P(\lambda_n)$, and
		\item[(2)] whenever $\mcA$ is a closed substructure of $\mcG_n$ and there are $\mcG$ and
		$\mcB \supset \mcA$ such that $\mcA$ and $\mcB$ are closed substructures of $\mcG$,
		$\mcG$ is isomorphic with some member of $\mbG$ and 
		$\dim_{\mcG}(A) + 1 = \dim_{\mcG}(B) \leq k$, 
		then there are closed substructures $\mcB_i \subseteq \mcM$, for $i = 1, \ldots, \lambda_n$,
		such that $B_i \cap B_j = A$ if $i \neq j$, and each $\mcB_i$ is isomorphic with $\mcB$
		via an isomorphism that fixes $A$ pointwise.
	\end{itemize}
}\end{defi}

\begin{exa}\label{examples of polynomially k-saturated pregeometries}
(i) Let $L_{\es}$ be the ``empty'' language from Example~\ref{example of trivial geometries}.
It is straightforward to verify that if for every $n \in \mbbN$, $\mcG_n$ is the unique $L_{\es}$-structure 
with universe $\{1, \ldots, n+1\}$, then $\mbG$ is polynomially $k$-saturated
for every $k \in \mbbN$.\\
(ii) Let $F$ be a finite field and let $L = L_{gen}$ as in 
Example~\ref{generic example of first-order pregeometry} 
or $L = L_F$ as in
Example~\ref{example of vector space over finite field}.
For $n \in \mbbN$ let $\mcV_n$ be a vector space over $F$ of dimension $n$.
Each $\mcV_n$ gives rise to a pregeometry $(V_n, \cl_n)$ where $\cl_n$ is linear span,
and each $\mcV_n$ can be viewed as an $L$-structure, call it $\mcG_n$, as in any one
of the mentioned examples (depending on whether we take $L = L_{gen}$ or $L = L_F$).
Then the pregeometry $\mbG = \{\mcG_n : n \in \mbbN\}$ is
polynomially $k$-saturated for every $k \in \mbbN$.
This is explained in some more detail in~\cite{Kop12} and the proofs in
Section~3.2 of~\cite{Djo06a} translate to the present context.\\
(iii) Let $F$ be a finite field.
If $\mcG_n$, for $n \in \mbbN$, is instead the pregeometry obtained from a
projective space over $F$ with dimension $n$,
viewed as an $L_{gen}$-structure as in Example~\ref{generic example of first-order pregeometry},
then $\mbG = \{\mcG_n : n \in \mbbN\}$ is polynomially $k$-saturated for every $k \in \mbbN$.
The same holds if `projective space' is replaced with `affine space'.
These facts are proved are proved in a slightly different context Section~3.2 of~\cite{Djo06a},
but the proofs there translate straightforwardly to the present context.
\end{exa}

\begin{ass}\label{assumptions on languages}{\rm
We now fix some notation and assumptions for the rest of the paper.
\begin{itemize}
\item[(1)] Let $l \geq 2$ be an integer, $P_1, \ldots, P_l$ unary relation symbols and let 
$V_{col} = \{P_1, \ldots, P_l\}$.
The symbols $P_i$ represent colours.
Let $V_{rel}$ be a finite nonempty set of relation symbols all of which have arity at least $2$.
Let $\rho$ be the maximal arity among the relation symbols in $V_{rel}$.

\item[(2)] Let $L_{pre}$ be a language with vocabulary $V_{pre}$, which is disjoint from both $V_{col}$ and $V_{rel}$.
Suppose that $\mbG = \{\mcG_n : n \in \mbbN\}$ is a set of finite $L_{pre}$-structures
where $G_n$ is the universe of $\mcG_n$ and $\mbG$ is a pregeometry in the sense of Definition~\ref{spredef}~(iii).
Also, assume that the $L_{pre}$-formulas $\theta_n(x_1, \ldots, x_{n+1})$, $n \in \mbbN$, define the
pregeometry according to Definition~\ref{spredef}.

\item[(3)] Let $L_{col}$ be the language with vocabulary $V_{pre} \cup V_{col}$,
let $L_{rel}$ be the language with vocuabulary $V_{pre} \cup V_{rel}$
and let $L$ be the language with vocabulary $V_{pre} \cup V_{col} \cup V_{rel}$.

\item[(4)] $\mbG$ is uniformly bounded and, for every $n \in \mbbN$, if $A \subseteq G_n$
is closed (with respect to $\cl_{\mcG_n}$) then $A$ is the universe of a substructure of $\mcG_n$
(or equivalently, $A$ contains all interpretations of constant symbols and is closed under interpretations
of function symbols, if such occur in the language).

\item[(5)] For every $n \in \mbbN$, if $\mcA$ is a closed substructure of $\mcG_n$
and $a_1, \ldots, a_{n+1} \in A$, then $a_{n+1} \in \cl_{\mcG_n}(a_1, \ldots, a_n)$
$\Longleftrightarrow$ $\mcA \models \theta_n(a_1, \ldots, a_{n+1})$.
In other words, the restriction of $\cl_{\mcG_n}$ to $A$ is definable in $\mcA$ by
the same formulas $\theta_n$.

\item[(6)] For every $n \in \mbbN$, if $\mcA$ is a closed substructure of $\mcG_n$,
then there is $m$ such that $\mcA \cong \mcG_m$.
Also assume that $\lim_{n\to\infty} \dim(\mcG_n) = \infty$ and,
for every $n \in \mbbN$, $\mcG_n \uhrc \cl_{\mcG_n}(\es) \cong \mcG_0$.

\item[(7)] For every $n \in \mbbN$, there is a ``characteristic'' quantifier-free $L_{pre}$-formula \\
$\chi_{\mcG_n}(x_1, \ldots,x_{m_n})$ of $\mcG_n$,
where $m_n = |G_n|$, such that if $\mcA$ is an $L_{pre}$-structure in which the formulas $\theta_n$
define a pregeometry (according to Definition~\ref{spredef}) and
$\mcA \models \chi_{\mcG_n}(a_1, \ldots, a_s)$ for some enumeration $a_1, \ldots, a_s$ of $A$,
then $\mcA \cong \mcG_n$.
\end{itemize}
}\end{ass}

\begin{rmk}\label{remark on assumptions on pregeometry}
(i) If $\theta_n$ is quantifier free for every $n \in \mbbN$,
then~(5) holds. Note that in all examples above, it is possible to
let $\theta_n$ be quantifier free for every $n \in \mbbN$,
either by using using the ``generic'' language $L_{gen}$ from 
Example~\ref{generic example of first-order pregeometry}, or by using some
of the other languages mentioned in the examples.\\
(ii) Observe that by~(5), if $\mcA$ is a closed substructure of $\mcG_n$ then
the formulas $\theta_n$ define a pregeometry $(A, \cl_\mcA)$, according to Definition~\ref{spredef}, and for all 
$X \subseteq A$, $\cl_\mcA(X) = \cl_{\mcG_n}(X)$. 
By~(5)--(6), for every $k \in \mbbN$, there are only finitely many $L_{pre}$-structures $\mcA$, 
up to isomorphism, such that
for some $n$, $\mcA \subseteq \mcG_n$ and $\dim_{\mcG_n}(A) \leq k$. \\
(iii) Condition~(7) obviously holds if the vocabulary $V_{pre}$ is finite.
But we want to be able to consider languages with infinite vocabularies, such as the
langauge $L_{gen}$ from Example~\ref{generic example of first-order pregeometry}.
If we take $L_{pre} = L_{gen}$ with the same interpretations as in 
Example~\ref{generic example of first-order pregeometry} and (1)--(6) hold, then also (7) holds.
\end{rmk}

\begin{defi}\label{coldef}
(i) We say that an $L$-structure $\mcN$ is {\bf \em $l$-coloured} if there is an $L$-structure $\mcM$ such
that $\mcM \cong \mcN$, $\mcM \uhrc L_{pre} = \mcG_n$ for some $n \in \mbbN$ and
$\mcM$ satisfies the following four conditions:
\begin{itemize}
\item[(1)] For all $a \in M$, $\mcM \models P_1(a)\vee...\vee P_l(a)$ if and only if $a \notin \cl_{\mcG_n}(\es)$,
in other words, an element has a colour if and only if it does not belong to the closure of $\es$.
\item[(2)] If $R \in \Vrel$ has arity $m \geq 2$ and $a_1, \ldots, a_m \in \cl_{\mcG_n}(\es)$, 
then $\mcM \models \neg R(a_1, \ldots, a_m)$.
\item[(3)] For all $i,j \in \{1,...,l\}$ such that $i\not = j$ and all $a,b \in M - \cl_{\mcG_n}(\emptyset)$ such that $a\in \cl_{\mcG_n}(b)$ we have that $\M\models \neg (P_i(a) \wedge P_j(b))$, i.e. dependent elements not belonging to
the closure of $\es$ have the same colour.
\item[(4)] If $R\in \Vrel$ has arity $m\geq 2$ and $\M\models R(a_1,...,a_m)$ then there are 
$b,c \in \cl_{\mcG_n}(a_1,...,a_m) - \cl_{\mcG_n}(\es)$ such that for every $k\in\{1,...,l\}$ we have 
$\M\models \neg (P_k(b) \wedge P_k(c))$.
\end{itemize}
(ii) We say that $\N$ is {\bf \em strongly $l$-coloured} if there is an $L$-structure $\mcM$ such
that $\mcM \cong \mcN$, $\mcM \uhrc L_{pre} = \mcG_n$ for some $n \in \mbbN$ and
$\mcM$ satisfies~(1)--(4) above and (5) below:
\begin{enumerate}
\item[(5)] If $R\in \Vrel$ has arity $m\geq 2$ and $\M\models R(a_1,...,a_m)$, then for all 
$b,c \in \cl_{\mcG_n}(a_1,...,a_m) - \cl(\es)$ that are linearly independent ($b\notin \cl_{\mcG_n}(c)$) and every $k\in\{1,...,l\}$,  $\M\models \neg (P_k(b) \wedge P_k(c))$. 
\end{enumerate}
(iii) An $L_{rel}$-structure is called {\bf \em (strongly) $l$-colourable} if it can be
expanded to an $L$-structure that is (strongly) $l$-coloured.\\
(iv) For $n \in \mbbN$, let $\mbK_n$ denote the set of all $l$-coloured structures $\mcM$ such that 
$\mcM \uhrc L_{pre} = \mcG_n$
and let $\mb{SK}_n$ denote the set of all strongly $l$-coloured structures $\mcM$ such that
$\mcM \uhrc L_{pre} = \mcG_n$. Similarly, let $\mbC_n$ and $\mbS_n$ denote the set
of $l$-colourable, respectively, strongly $l$-colourable structures $\mcM$ such that
$\mcM \uhrc L_{pre} = \mcG_n$.
Finally, let $\mbK = \bigcup_{n\in\mbbN} \mbK_n$, $\mb{SK} = \bigcup_{n\in\mbbN} \mb{SK}_n$
$\mbC = \bigcup_{n\in\mbbN} \mbC_n$ and $\mbS = \bigcup_{n\in\mbbN} \mbS_n$
\end{defi} 

\noindent
It follows that if $\mcM$ is (strongly) $l$-colourable (or $l$-coloured) and 
all $a_1, \ldots, a_r \in M$ belong to the same 0- or 1-dimensional subspace,
then $\mcM \not\models R(a_1, \ldots, a_r)$.

\begin{rmk}\label{remark about definition of l-coloured structures}
(i) If we say that $\mcM$ is (strongly) $l$-coloured then it is presupposed that $\mcN$ is an
is an $L$-structure. If we say that $\mcM$ is (strongly) $l$-colourable then it is presupposed 
that $\mcM$ is an $L_{rel}$-structure.\\
(ii) From Definition~\ref{coldef} it follows that if $\mcM$ is (strongly) $l$-coloured 
or (strongly) $l$-colourable, then the formulas $\theta_n(x_1, \ldots, x_{n+1})$ 
from Assumption~\ref{assumptions on languages} define a pregeometry on $M$
according to Definition~\ref{spredef}. 
We always have this pregeometry in mind when speaking of the pregeometry of
an (strongly) $l$-coloured or (strongly) $l$-colourable structure.\\
(iii) From the definition of (strongly) $l$-coloured and (strongly) $l$-colourable structures
and Assumption~\ref{assumptions on languages}
it follows that if $\mcM$ is a (strongly) $l$-coloured, or (strongly) $l$-colourable,
structure, and $\mcA$ is a closed substructure of $\mcM$, then $\cl_\mcA(X) = \cl_\mcM(X)$
for every $X \subseteq A$. For this reason we will usually omit the subscripts `$\mcA$' and `$\mcM$'
and just write `$\cl$'.
Also note that from Assumption~\ref{assumptions on languages} it follows that there is
a unique (strongly) $l$-coloured/colourable structure of dimension 0.
\end{rmk}

\begin{defi}\label{definition of d-dimensional reduct}
Suppose that $\M$ is an $L$-structure. 
Let $d\in\mathbb{N}$. 
The {\bf \em $d$-dimensional reduct} of $\M$, denoted $\M\uhrc d$, is the unique $L$-structure satisfying the following three conditions:
\begin{itemize}
\item[(1)] $\M\uhrc d$ has the same universe as $\M$.
\item[(2)] Every symbol in $V_{pre}$ has the same interpretation in $\M \uhrc d$ as in $\M$.
\item[(3)] For each relation symbol $R \in V_{col} \cup V_{rel}$ and tuple $\bar{a} \in M$ of the 
corresponding arity,
\[ \bar a \in R^{\M\reduct d} \Leftrightarrow \dim_\M(\bar a) \leq d \text{ and } \bar a \in R^\M.\]
\end{itemize}
Let $\K_n \uhrc d = \{\M\uhrc d : \M\in\K_n\}$ and 
$\mb{SK}_n \uhrc d = \{\M \uhrc d : \M \in \mb{SK}_n\}$.
\end{defi}

\noindent Notice that if $\mcM$ is a (strongly) $l$-colourable structure and
$d$ is an integer such that no relation symbol in $\Vrel$ has higher arity than $d$,
then $\mcM \uhrc d = \mcM$.
We also have $\mbK_n \uhrc 0 = \{\mcG_n\} = \mb{SK}_n \uhrc 0$ for every $n$.
By the {\bf \em uniform probability measure} on a finite set $X$ we mean the probability
measure which gives every member of $X$ the same probability $1/|X|$.
Recall from Assumption~\ref{assumptions on languages} that $\rho$
is the highest arity that occurs among the relation symbols of $V_{rel}$, so $\rho \geq 2$.

\begin{defi}\label{definition of probability measures}
(i) For every $n \in \mbbN$ and every integer $0 \leq r \leq \rho$ we define a
probability measure $\mbbP_{n,r}$ on $\K_n \uhrc r$ by induction on $r$ as follows.
$\mathbb{P}_{n,0}$ is the uniform probability measure on $\K_n\uhrc 0$. 
For each $1\leq r\leq \rho$ and $\M\in \K_n\uhrc r$ we define
\[ \mathbb{P}_{n,r}(\M) = \frac{1}{|\{\M'\in \K_n\uhrc r: \M'\reduct r-1 = \M\uhrc r-1\}|} \cdot \mathbb{P}_{n,r-1}(\M\uhrc r-1).\]
(ii) We then define $\delta^\K_n = \mathbb{P}_{n,\rho}$ which we call the 
{\bf \em dimension conditional measure on} $\K_n = \K_n \uhrc \rho$. \\
(iii) The {\bf \em dimension conditional measure on} $\mb{SK}_n$, denoted $\delta^\mb{SK}_n$,
is defined in the same way, by replacing $\K_n$ with $\mb{SK}_n$ in part~(i) and then letting 
$\delta^\mb{SK}_n = \mbbP_{n,\rho}$.
\end{defi}

\begin{exa}
Let $L_{pre} = L_F$ as in Example~\ref{example of vector space over finite field} and let $F = \mbbZ_2$.
Suppose that $l = 2$, so $\Vcol = \{P_1, P_2\}$, and suppose that $\Vrel = \{R\}$ where $R$ is binary.
Let $\G_2=\mbbZ_2 \times \mbbZ_2$, that is, $\G_2$ is a 2-dimensional vector space over $\mbbZ_2$. 
From the assumptions that have been made it follows that $\K_2$ is the set of all $2$-coloured structures 
$\M$ such that $\M \uhrc L_{pre} = \G_2 = \mbbZ_2 \times \mbbZ_2$.
We have $|\K_2| = 26$,
so if $\M\in\K_2$ is the structure in which all non-zero vectors have colour $P_1$ 
and consequently $R^\M = \emptyset$, then with the uniform probability measure the probability of $\M$ is $1/26$.

If we want to calculate $\delta^{\K}_2(\M)$, where $\M$ is still the same structure, we first need to calculate $\mathbb{P}_{2,0}(\M \uhrc 0)$ which equals $1$, because $\mbbP_{2,0}$ is the uniform probability
on $\K_2 \uhrc 0$ which contains exactly one structure, namely $\G_2 = \mcM \uhrc 0$.
When we consider $\mathbb{P}_{2,1}(\M \uhrc 1)$ we look at structures in $\K_2\uhrc 1$, that is,
$\G_2$ with colours added. 
Since $|\K_2 \uhrc 1| = 8$ and the 0-dimensional reduct of every member of $\K_2 \uhrc 1$ is $\G_2$
it follows that
\[\mathbb{P}_{2,1}(\M \uhrc 1) \ = \ \frac{1}{|\{\M'\in \K_2\reduct 1: \M'\reduct 0 = \M\reduct 0\}|} \cdot \mathbb{P}_{2,0}(\M \uhrc 0)
= \frac{1}{8} \cdot 1 = \frac{1}{8}.\]
The last step, to calculate $\delta^\K_2(\M) = \mathbb{P}_{2,2}(\M)$ is easy, since the only structure in $\K_2\uhrc 2 = \K_2$ which has the same colouring as $\M$ is $\M$ itself.
Hence 
\[\delta^\K_2(\M) \ = \ \mathbb{P}_{2,2}(\M) \ = \ 
\frac{1}{|\{\M'\in \K_2\reduct 2: \M'\reduct 1 = \M\reduct 1\}|} \cdot 
\mathbb{P}_{2,1}(\M \uhrc 1)
= \frac{1}{1} \cdot \frac{1}{8} = \frac{1}{8}.\]
\end{exa}

\begin{rmk}\label{remark about dimension conditional measure}
We defined $\delta_n^\mbK$ and $\delta_n^{\mb{SK}}$ as we did in 
Definition~\ref{definition of probability measures}
because we are going to use results from~\cite{Kop12}.
But in the present (more specialised) context, $\delta_n^\mbK$ can be more simply
characterised as follows. For every $\mcM \in \mbK_n$ we have
\[ \delta_n^\mbK(\mcM) \ = \ 
\frac{1}{\big|\mbK_n \uhrc 1\big| \cdot \big|\{\mcM' \in \mbK_n : \mcM' \uhrc 1 = \mcM \uhrc 1\}\big|},\]
and similarly for $\delta_n^{\mb{SK}}$.
This is not difficult to prove, by the use of the definitions of $l$-coloured, and strongly $l$-coloured,
structures. Note that any given colouring of an $l$-coloured structure $\mcM \in \mbK_n$
has probability $1 / |\mbK_n \uhrc 1|$ with this measure.
\end{rmk}

\begin{defi}\label{extax}\label{definition of extension property}
Let $\mcM$ be an (strongly) $l$-coloured structure.\\
(i) Suppose that $\mcB$ is an (strongly) $l$-coloured structure and that
$\mcA$ is a closed substructure of $\mcB$, so $\mcA$ is also (strongly) $l$-coloured.
We say that $\mcM$ has the {\bf \em $\mcB/\mcA$-extension property} if 
whenever $\mcA'$ is a closed substructure of $\mcM$ and
$\sigma_{\mcA} : \mcA' \to \mcA$ is an isomorphism, then there are a closed substructure
$\mcB'$ of $\mcM$ such that $\mcA' \subset \mcB'$
and an isomorphism $\sigma_{\mcB} : \mcB' \to \mcB$ which extends~$\sigma_{\mcA}$.\\
(ii) Let $k \in \mbbN$. We say $\mcM$ has the {\bf \em $k$-extension property} 
if it has the $\mcB/\mcA$-extension property whenever $\mcB$ is an (strongly) $l$-coloured
structure, $\mcA$ is a closed substructure of $\mcB$ and $\dim_\mcM(B) \leq k$.
\end{defi}

\noindent
When saying that two $l$-coloured structures $\mcA$ and $\mcA'$ 
{\bf \em agree on $L_{pre}$ and on closed proper substructures}
we mean that $\mcA \uhrc L_{pre} = \mcA' \uhrc L_{pre}$ 
(so in particular, $\cl_{\mcA} = \cl_{\mcA'}$)
and whenever $\mcU$ is a closed substructure of $\mcA$ and $\dim_{\mcA}(U) < \dim_{\mcA}(A)$,
then $\mcA \uhrc U = \mcA' \uhrc U$.

\begin{lma}\label{acceptance of substitutions by l-colourable structures}
Whenever $\mcM$ is (strongly) $l$-coloured, $\mcA$ is a closed substructure of $\mcM$
and $\mcA'$ is an (strongly) $l$-coloured structure which agrees with $\mcA$ on $L_{pre}$ 
and on closed proper substructures,
then there is an (strongly) $l$-coloured structure $\mcN$ such that
$\mcN \uhrc L_{pre} = \mcM \uhrc L_{pre}$,
$\mcN \uhrc A = \mcA'$ and 
if $\mcU$ is a closed substructure of $\mcN$,
$\dim_{\mcN}(U) \leq \dim_{\mcN}(A')$
and $\mcU \neq \mcA'$, then $\mcN \uhrc U = \mcM \uhrc U$.
\end{lma}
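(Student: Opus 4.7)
\emph{Plan.} Write $d := \dim(A)$. I construct $\mcN$ to have universe $M$ and reduct $\mcN \uhrc L_{pre} := \mcM \uhrc L_{pre}$, so that $\cl_\mcN = \cl_\mcM$. I then specify the interpretation of each symbol $R \in V_{col} \cup V_{rel}$ on every tuple $\bar a$ by a trichotomy on $C := \cl_\mcM(\bar a)$. If $C = A$, set $R^\mcN(\bar a) \Leftrightarrow R^{\mcA'}(\bar a)$. If $C \neq A$ and $\dim C \leq d$, set $R^\mcN(\bar a) \Leftrightarrow R^\mcM(\bar a)$. If $\dim C > d$, which forces $R \in V_{rel}$ of arity $\geq 2$, keep $R^\mcN(\bar a) \Leftrightarrow R^\mcM(\bar a)$ provided that the resulting colouring of $C - \cl(\es)$ still witnesses condition~(4) of Definition~\ref{coldef} for $\bar a$ (respectively~(5) in the strong case), and set $R^\mcN(\bar a)$ to false otherwise. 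The colours of all elements of $C - \cl(\es)$ are already determined by the first two clauses, so this is well-defined.

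The identities $\mcN \uhrc A = \mcA'$ and $\mcN \uhrc U = \mcM \uhrc U$, for any closed $\mcU$ with $U \neq A$ and $\dim U \leq d$, are then easy. For the first, any tuple $\bar a \in A$ either spans $A$ (and the first clause gives the interpretation from $\mcA'$) or lies in a closed proper substructure of $\mcA$, on which $\mcA$ and $\mcA'$ agree by hypothesis while $\mcN$ inherits from $\mcM$ and hence from $\mcA$. For the second, two distinct closed sets $A$ and $U$ with $\dim U \leq d$ cannot be nested, so $A \not\subseteq U$, and therefore no tuple in $U$ has closure equal to $A$; everything on $U$ inherits from $\mcM$. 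The parallel arguments on singletons handle the colours of individual elements.

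The heart of the proof is verifying that $\mcN$ is (strongly) $l$-coloured. Conditions~(1)--(3) of Definition~\ref{coldef} are routine, because each $\mcN$-colour is inherited from one of the two (strongly) $l$-coloured sources $\mcA'$ or $\mcM$, which agree on all proper closed substructures of $\mcA$, and dependent elements of $M - \cl(\es)$ share a $1$-dimensional subspace and so draw their colour from the same source. For~(4), suppose $\mcN \models R(\bar a)$. If $\cl(\bar a) = A$, condition~(4) for $\mcA'$ supplies witnesses $b, c$ whose $\mcN$-colours match their $\mcA'$-colours---when $d = 1$ directly by the first clause, and when $d \geq 2$ via the agreement of $\mcA$ and $\mcA'$ on $\cl(b), \cl(c) \subsetneq A$. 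If $\cl(\bar a) \neq A$ and $\dim \cl(\bar a) \leq d$, then $\mcN \models R(\bar a)$ forces $\dim \cl(\bar a) \geq 2$, hence $d \geq 2$, so no element of $\cl(\bar a)$ can have closure equal to $A$, every $\mcN$-colour on $\cl(\bar a) - \cl(\es)$ coincides with its $\mcM$-colour, and~(4) transfers from $\mcM$. If $\dim \cl(\bar a) > d$, the third clause guarantees~(4) by construction. The strong condition~(5) follows from the identical trichotomy.

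\emph{Main obstacle.} The only place where a naive substitution of $\mcA'$ for $\mcA$ inside $\mcM$ can fail is the case $d = 1$: there the unique $1$-dimensional subspace $A$ genuinely receives a new colour, which can invalidate~(4) or~(5) for an $\mcM$-relation $R(\bar a)$ with $A \subsetneq \cl(\bar a)$ in the corner case where every other $1$-dimensional subspace of $\cl(\bar a)$ already carries the new colour. The lemma imposes nothing on $\mcN \uhrc U$ for $\dim U > d$, and this is exactly the freedom that the third clause of the construction exercises to drop the offending relations without disturbing any other requirement.
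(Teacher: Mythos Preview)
Your argument is correct and follows the same underlying idea as the paper: build $\mcN$ on the $L_{pre}$-skeleton of $\mcM$, install $\mcA'$ on $A$, keep $\mcM$ elsewhere in dimension $\leq d$, and adjust relations on tuples of dimension $>d$ so that the colouring conditions survive. Where the paper splits into cases $d=0$, $d=1$, $d\geq 2$ and in the latter two simply \emph{erases} $V_{rel}$-relations wholesale (on every tuple when $d=1$, on tuples of dimension $>d$ when $d\geq 2$), you give a single uniform construction and, in the high-dimensional clause, drop only those relations whose witness for condition~(4) (respectively~(5)) has been destroyed by the recolouring. Your version is tidier and alters $\mcM$ less; the paper's cruder erasure makes the verification of~(4) immediate. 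Two minor points: your construction tacitly assumes $d\geq 1$ (both the claim that the first two clauses already fix all colours and the parenthetical ``forces $R\in V_{rel}$'' fail when $d=0$), but that case is trivial since conditions~(1)--(2) of Definition~\ref{coldef} force $\mcA=\mcA'$, so $\mcN=\mcM$ works; and ``cannot be nested'' overstates what you need and use, namely $A\not\subseteq U$, which does hold for closed $U\neq A$ with $\dim U\leq d$.
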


\begin{proof}
We only prove the lemma in the case of $l$-coloured structures.
The proof for {\em strongly} $l$-coloured structures is a straightforward modification.
Suppose that $\mcM$ is $l$-coloured, that $\mcA$ is a closed substructure of $\mcM$,
and therefore $l$-coloured. Also assume that $\mcA'$ is $l$-coloured and agrees
with $\mcA$ on $L_{pre}$ and on closed proper substructures.
Observe that by these assumptions and Assumption~\ref{assumptions on languages}, 
for every $X \subseteq A$ we have $\cl_{\mcM}(X) = \cl_{\mcA}(X) = \cl_{\mcA'}(X)$
and $\dim_{\mcM}(X) = \dim_{\mcA}(X) = \dim_{\mcA'}(X)$, so we can omit the subscripts.
The proof splits into three cases.

First suppose that $\dim(A) = 0$.
By parts~(1) and~(2) of the definition of $l$-coloured structure we have $\mcA = \mcA'$
so if $\mcN = \mcM$ then the conclusion of the lemma is satisfied.

Now suppose that $\dim(A) = 1$, so
$\mcA$ is a one dimensional structure and therefore all $a \in A - \cl(\es)$
have the same colour in $\mcA$, say $i$ (that is, $\mcA \models P_i(a)$).
Similarly, $\mcA'$ is a one dimensional structure so all $a \in A' - \cl(\es)$ have the same colour in 
$\mcA'$, say $j$.
Let $\mcN$ be the structure which satisfies the following conditions:
\begin{itemize}
\item[\textbullet] $\mcN \uhrc L_{pre} = \mcM \uhrc L_{pre}$, so in particular $N = M$.

\item[\textbullet] For every $R \in V_{rel}$, $R^{\mcN} = \es$.

\item[\textbullet] For every $a \in M - A$
and every $m \in \{1, \ldots, l\}$, \\
$\mcN \models P_m(a)$ $\Longleftrightarrow$ 
$\mcM \models P_m(a)$.

\item[\textbullet] For every $a \in A - \cl(\es)$, $\mcN \models P_j(a)$.
\end{itemize}
Then $\mcN$ is $l$-coloured, for trivial reasons, and has the required properties which is easily checked.

Finally suppose that $\dim(A) = k+1$ where $k \geq 1$.
Define $\mcN$ as follows:
\begin{itemize}
\item[\textbullet] $\mcN \uhrc k = \mcM \uhrc k$, so in particular $\mcN \uhrc L_{pre} = \mcM \uhrc L_{pre}$.

\item[\textbullet] Whenever $U$ is a closed subset of $M = N$,
$\dim(U) = k+1$ and $U \neq A$, then $\mcN \uhrc U = \mcM \uhrc U$.

\item[\textbullet] $\mcN \uhrc A = \mcA'$.

\item[\textbullet] Whenever $\bar{a} \in M$, $\dim(\bar{a}) > k+1$ and
$R \in V_{rel}$, then $\bar{a} \notin R^{\mcN}$.
\end{itemize}
It remains to prove that $\mcN$ is $l$-coloured.
Since $\mcN \uhrc k = \mcM \uhrc k$, where $k \geq 1$, it follows that $\mcN \uhrc L_{col} = \mcM \uhrc L_{col}$
and hence conditions~(1)--(3) in the definition of $l$-coloured structure are satisfied.

Now we consider condition~(4).
Suppose that $\mcN \models R(\bar{a})$ where $R \in V_{rel}$.
We need to show that there are $b, c \in \cl(\bar{a}) - \cl(\es)$ such
that $b$ and $c$ have different colours.
By the last part of the definition of $\mcN$ we may assume that $\dim(\bar{a}) \leq k+1$.
If $\dim(\bar{a}) \leq k$ then, by the first part of the definition of $\mcN$,
and the assumption that $\mcM$ is $l$-coloured it follows that $b, c \in \cl(\bar{a}) - \cl(\es)$
with different colours exist.
Now suppose that $\dim(\bar{a}) = k+1$.
If $\cl(\bar{a}) \neq A$, then, by the second part of the definition of $\mcN$
and the assumption that $\mcM$ is $l$-coloured,
there are $b, c \in \cl(\bar{a}) - \cl(\es)$
with different colours.
Finally suppose that $\cl(\bar{a}) = A$.
By the third part of the definition of $\mcN$, $\mcN \uhrc A = \mcA'$, so
$\mcA' \models R(\bar{a})$ and since $\mcA'$ is $l$-colored there are
$b, c \in \cl(\bar{a}) - \cl(\es)$ with different colours in $\mcA'$,
and hence (by the third part of the definition of $\mcN$ again) they have different
colours in $\mcN$. 
\end{proof}

\noindent
In the terminology of \cite{Kop12} (Definition~7.20), 
Lemma~\ref{acceptance of substitutions by l-colourable structures} 
says that, for every $k \in \mbbN$, $\mbK$ and $\mb{SK}$ {\em accept $k$-substitutions over $L_{pre}$}.
Therefore, Assumption~\ref{assumptions on languages} and
Theorems~7.31 and~7.32 in~\cite{Kop12} imply the following:

\begin{thm}\label{ccv1}
Suppose that, for every $k \in \mbbN$, $\mbG$ is polynomially $k$-saturated.\\
(i) For every $k \in \mbbN$, 
\begin{align*}
&\lim_{n\rightarrow\infty}
\delta^{\mbK}_n\big(\{\mcM \in \mbK_n : \mcM \text{ has the $k$-extension property}\}\big) 
\ = \ 1 \quad \text{ and}\\
&\lim_{n\rightarrow\infty}
\delta^{\mb{SK}}_n\big(\{\mcM \in \mb{SK}_n : \mcM \text{ has the $k$-extension property}\}\big)
\ = \ 1.
\end{align*}
(ii) For every $L$-sentence $\varphi$,
$\delta^{\mbK}_n\big(\{\mcM \in \mbK_n : \mcM \models \varphi\}\big)$ approaches either 0 or 1, and 
$\delta^{\mb{SK}}_n\big(\{\mcM \in \mb{SK}_n : \mcM \models \varphi\}\big)$
approaches either 0 or 1, as $n$ tends to infinity.
\end{thm}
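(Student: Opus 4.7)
The plan is straightforward: Theorem~\ref{ccv1} is set up to be an immediate application of Theorems~7.31 and~7.32 of~\cite{Kop12}, and the only real work is to verify that their hypotheses hold in the present setup. The key ingredient specific to (strongly) $l$-colourable structures has already been supplied by Lemma~\ref{acceptance of substitutions by l-colourable structures}, which, in the terminology of Definition~7.20 of~\cite{Kop12}, says precisely that both $\mbK$ and $\mb{SK}$ accept $k$-substitutions over $L_{pre}$ for every $k \in \mbbN$.

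First I would confirm that the background framework of~\cite{Kop12} is in force. Assumption~\ref{assumptions on languages} provides uniform boundedness of $\mbG$, the fact that every closed subset is the universe of a substructure, the existence of characteristic quantifier-free formulas $\chi_{\mcG_n}$, isomorphism of every closed substructure of $\mcG_n$ with some $\mcG_m$, and the divergence $\dim(\mcG_n) \to \infty$. Combined with the polynomial $k$-saturation hypothesis stated in the theorem itself, this is exactly the input required by Theorems~7.31 and~7.32 of~\cite{Kop12}.

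With Lemma~\ref{acceptance of substitutions by l-colourable structures} in hand, part~(i) follows by applying Theorem~7.31 of~\cite{Kop12} to each of the pairs $(\mbG, \mbK)$ and $(\mbG, \mb{SK})$ in turn: the conclusion of that theorem is exactly that the $k$-extension property has dimension conditional probability tending to~$1$. Part~(ii), the zero-one law, then follows from Theorem~7.32 of~\cite{Kop12} under the same hypotheses; alternatively, it can be recovered from~(i) by the standard Fagin-style back-and-forth argument, since the family of all $k$-extension axioms (for $k \in \mbbN$) axiomatises a countably categorical, hence complete, theory, so that every $L$-sentence is equivalent modulo that theory to a Boolean combination of finitely many extension axioms, each of which has asymptotic probability~$1$ by~(i). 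The one step that could have been an obstacle was establishing $k$-substitution acceptance for $\mbK$ and $\mb{SK}$, and that has already been handled by the preceding lemma; everything else is a bookkeeping check that the framework of~\cite{Kop12} applies.
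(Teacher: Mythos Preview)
Your proposal is correct and matches the paper's approach exactly: the paper derives Theorem~\ref{ccv1} in one sentence from Lemma~\ref{acceptance of substitutions by l-colourable structures} (which establishes that $\mbK$ and $\mb{SK}$ accept $k$-substitutions over $L_{pre}$) together with Assumption~\ref{assumptions on languages} and Theorems~7.31 and~7.32 of~\cite{Kop12}. Your additional remark about recovering~(ii) from~(i) via a Fagin-style argument is a valid alternative but is not needed, since Theorem~7.32 of~\cite{Kop12} already delivers the zero-one law directly.
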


\noindent
Now we have a zero-one law for
(strongly) $l$-colour{\em ed} structures, with the dimension conditional probability measure. 
Next, we look att (strongly) $l$-colour{\em able} structures, with a probability measure that
is derived from the dimension conditional measure

\begin{defi}\label{definition of probability measure on C-n and S-n}
For each $n$ and all $X\subseteq\C_n$ and $Y \subseteq \mbS_n$ let
\begin{align*}
\delta_n^\mbC(X) \ &= \ \delta_n^\K \big(\{\M\in\K_n :\M\uhrc\Lrel\in X\}\big), \text{ and} \\
\delta_n^\mbS(X) \ &= \ \delta_n^{\mb{SK}} \big(\{\M\in\mb{SK}_n :\M\uhrc\Lrel\in X\}\big).
\end{align*}
\end{defi}

\noindent
Intuitively, for $X \subseteq \mbC_n$, we can think of $\delta_n^\mbC(X)$ as the probability 
that $\mcM \in \mbC_n$ will belong to $X$ if $\mcM$ is generated by the following procedure: 
start with $\mcG_n$ and randomly add colours to the 1-dimensional subspaces of $\mcG_n$, then add $R$-relations for each $R \in V_{rel}$ in such
a way that the colouring conditions (1)--(4) of Definition~\ref{coldef} are respected but apart from this
in a random fashion, and finally, forget about the specific colouring, that is, consider the reduct to $L_{rel}$.
The probability measure $\delta_n^\mbS$ can be interpreted analogously.
The corollary below states tells that a zero-one law holds for (strongly) $l$-colourable structures
when probability measure $\delta_n^\mbC$ ($\delta_n^\mbS$) is used, in other words,
it states the same thing as Theorem~\ref{zero-one law}.

\begin{cor}\label{0-1 law for l-colourable structures}
Suppose that, for every $k \in \mbbN$, $\mbG$ is polynomially $k$-saturated.
For every $L_{rel}$-sentence $\varphi$, 
$\delta^{\mbC}_n\big(\{\mcM \in \mbC_n : \mcM \models \varphi\}\big)$ approaches either 0 or 1, and 
$\delta^{\mbS}_n\big(\{\mcM \in \mbS_n : \mcM \models \varphi\}\big)$
approaches either 0 or 1, as $n$ tends to infinity.
\end{cor}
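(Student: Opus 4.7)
The plan is to reduce the statement directly to Theorem~\ref{ccv1}(ii) by exploiting the definition of $\delta_n^{\mbC}$ (and $\delta_n^{\mbS}$) as a pushforward of the dimension conditional measure along the $L_{rel}$-reduct map.

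Fix an $L_{rel}$-sentence $\varphi$. Since $L_{rel} \subseteq L$ and $\varphi$ mentions no symbol from $V_{col}$, for any $l$-coloured structure $\mcN \in \mbK_n$ the coincidence lemma gives $\mcN \models \varphi$ iff $\mcN \uhrc L_{rel} \models \varphi$. Consequently
\[
\{\mcN \in \mbK_n : \mcN \uhrc L_{rel} \in \{\mcM \in \mbC_n : \mcM \models \varphi\}\}
\;=\; \{\mcN \in \mbK_n : \mcN \models \varphi\}.
\]
Applying Definition~\ref{definition of probability measure on C-n and S-n} to the left-hand set yields
\[
\delta_n^{\mbC}\big(\{\mcM \in \mbC_n : \mcM \models \varphi\}\big)
\;=\; \delta_n^{\mbK}\big(\{\mcN \in \mbK_n : \mcN \models \varphi\}\big).
\]
By Theorem~\ref{ccv1}(ii) (which applies because $\mbG$ is polynomially $k$-saturated for every $k$), the right-hand side converges to $0$ or $1$ as $n \to \infty$, which is exactly what is required for $\delta_n^{\mbC}$.

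The argument for $\delta_n^{\mbS}$ is verbatim the same, replacing $\mbK$ by $\mb{SK}$ and $\mbC$ by $\mbS$ throughout, and invoking the second half of Theorem~\ref{ccv1}(ii).

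There is essentially no obstacle here; the whole point of the detour through $l$-coloured structures and the dimension conditional measure was to have a zero-one law available in a setting where the colour predicates are part of the language, and Definition~\ref{definition of probability measure on C-n and S-n} was arranged precisely so that sentences in $L_{rel}$ have the same probability under $\delta_n^{\mbC}$ as under $\delta_n^{\mbK}$. The only thing one must check is the (immediate) observation that an $L_{rel}$-sentence cannot distinguish an $l$-coloured structure from its $L_{rel}$-reduct, and this is what makes the pushforward measure of the event $\{\mcM \models \varphi\}$ equal to the $\delta_n^{\mbK}$-measure of the same event.
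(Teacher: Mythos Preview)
Your proof is correct and follows exactly the same approach as the paper: unwind Definition~\ref{definition of probability measure on C-n and S-n}, use that an $L_{rel}$-sentence cannot distinguish an $L$-structure from its $L_{rel}$-reduct, and invoke Theorem~\ref{ccv1}(ii). There is nothing to add.
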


\noindent
{\em Proof.}
Let $\varphi$ be an $L_{rel}$-sentence, so in particular it is an $L$-sentence.
Then
\begin{align*}
&\delta_n^\mbC\big(\{\mcM \in \mbC_n : \mcM \models \varphi\}\big) \\
= \ &\delta_n^\mbK\big(\{\mcM \in \mbK_n : \mcM \uhrc L_{rel} \models \varphi \}\big) \quad
\text{ (by the definition of $\delta_n^\mbC$)} \\
= \ &\delta_n^\mbK\big(\{\mcM \in \mbK_n : \mcM \models \varphi\}\big) \quad
\text{ (since $\mcM \models \varphi \ \Leftrightarrow \mcM \uhrc L_{rel} \models \varphi$).} 
\end{align*}
Since $\varphi$ is also an $L$-sentence, Theorem~\ref{ccv1} implies
that $\delta_n^\mbC\big(\{\mcM \in \mbC_n : \mcM \models \varphi\}\big)$ approaches either 0 or 1 as $n \to \infty$.
The proof that $\delta_n^\mbS\big(\{\mcM \in \mbS_n : \mcM \models \varphi\}\big)$ approaches either 0 or 1 as 
$n \to \infty$ is exactly the same; just replace $\mbC_n$ by $\mbS_n$ and $\mbK_n$ by $\mb{SK}_n$.
\hfill $\square$
\\

\noindent
However, neither the theorem nor its proof gives information about for which $L_{rel}$-sentences
$\varphi$ we have $\lim_{n\to\infty} \delta_n^\mbC\big(\{\mcM \in \mbC_n : \mcM \models \varphi\}\big) = 1$,
nor do we get information about structural properties of (strongly) $l$-colourable structures.
The remaining sections deal with these issues.
In hindsight it seems silly that the second author of this article did not notice, in \cite{Kop12}, 
this easy way of proving the zero-one law of (strongly) $l$-colourable structures with trivial pregeometry,
when the measures $\delta_n^\mbC$ (or $\delta_n^\mbS$) are used.
But in \cite{Kop12} emphasis was put on extension axioms, which may explain why the above ``short cut'' to
Corollary~\ref{0-1 law for l-colourable structures} in the case when the underlying pregeomeries 
are trivial was not noticed.

It will sometimes be convenient to think of $l$-colourings as functions that 
assign colours to elements, as done in combinatorics, so we introduce the following terminology.

\begin{defi}\label{definition of l-colouring}
Let $\A$ be an $\Lrel$-structure and let $\gamma: A - \cl(\es) \rightarrow \{1,...,l\}$. 
Let $B$ be a closed subset of $A$.
We say that $B$ is $\gamma$-{\bf \em monochromatic}
if for all $a, b \in B - \cl(\es)$, $\gamma(a) = \gamma(b)$. 
If $B$ is not $\gamma$-monochromatic then it is called $\gamma$-{\bf \em multichromatic}.
If $\gamma(a) \neq \gamma(b)$ whenever $a \in B$ and $b \in B$ are independent, then we call
$B$ {\bf \em strongly $\gamma$-multichromatic}.
If there is no risk of confusion we may just say monochromatic, multichromatic or strongly multichromatic. 
We say that $\gamma$ is a {\bf \em (strong) $l$-colouring} of $\A$ if the following conditions hold:
\begin{enumerate}
\item For every $a \in A - \cl(\es)$, $\cl(a)$ is $\gamma$-monochromatic.
\item If $R\in\Vrel$ and $\A\models R(\bar a)$ then $\cl(\bar a)$ is (strongly) $\gamma$-multichromatic.
\end{enumerate}
\end{defi}

\noindent
Observe that an $L_{rel}$-structure $\mcA$ is (strongly) $l$-colourable, according to 
Definition~\ref{coldef}, if and only if there is an (strong) $l$-colouring 
$\gamma : A - \cl(\es) \to \{1, \ldots, l\}$ of $\mcA$.
We will often want to describe the isomorphism type of some particular structure with
a sentence, which motivates the following definition.

\begin{defi}\label{chardef}
Let $\mcA$ be an (strongly) $l$-colourable structure and let $A = \{a_1, \ldots, a_m\}$
By a {\bf \em characteristic formula of $\mcA$}, with respect to the given enumeration of $A$, 
we mean a quantifier-free $L_{rel}$-formula
$\chi_\mcA(x_1, \ldots, x_m)$ such that if $\mcM$ is an $L_{rel}$-structure such
that the formulas $\theta_n$ define a pregeometry $(M, \cl_\mcM)$ 
and $\mcM \models \chi_\mcA(b_1, \ldots, b_m)$, then the map $a_i \mapsto b_i$, for $i = 1, \ldots, m$,
is an embedding of $\mcA$ into $\mcM$.
Similarly we define a characteristic formula of an (strongly) $l$-coloured structure.
Note that such formulas exist because of the definition of (strongly) $l$-colourable (or $l$-coloured) structures
and Assumption~\ref{assumptions on languages}~(7) 
(see also Remark~\ref{remark on assumptions on pregeometry}~(iii)).
\end{defi}

\section{Definability of strong $l$-colourings}~\label{sc}

\noindent 
In this section we study strongly $l$-coloured structures, where $l \geq 2$ (as always). 
If $a$ and $b$ are elements of a strongly $l$-coloured structure and $\mcM \models P_i(a) \wedge P_i(b)$
for some $i \in \{1, \ldots, l\}$, then we say that $a$ and $b$ have the same colour.
The main result of this section, which is essential for the proof of 
Theorem~\ref{definability of strong colourings}, which is finished in Section~\ref{cc},
is the following: there are $k_0 \in \mbbN$ and an $L_{rel}$-sentence $\xi(x,y)$ such that
\begin{itemize}
\item if $\mcM$ is strongly $l$-coloured, $a, b \in M - \cl(\es)$
and $\mcM \models \xi(a,b)$, then $a$ and $b$ have the same colour, and
\item if $\mcM$ is strongly $l$-coloured and has the $k_0$-extension property and
$a, b \in M - \cl(\es)$, then
$\mcM \models \xi(a,b)$ if and only if $a$ and $b$ have the same colour.
\end{itemize}

\noindent
The definition of strongly $l$-colourable structures implies that if $\mcM$ is strongly $l$-colourable,
$R \in V_{rel}$ is an $r$-ary relation symbol (so $r \geq 2$), $\mcM \models R(a_1, \ldots, a_r)$ and
$b, c \in \cl(a_1, \ldots, a_r) - \cl(\es)$ are independent, then $a$ and $b$ must have different colours.
It follows that if $a_1, \ldots, a_r \in M$ and the number of 1-dimensional subspaces (i.e. closed subsets) of $\cl(a_1, \ldots, a_r)$ is larger than $l$, then $\mcM \not\models R(a_1, \ldots, a_r)$.

\begin{exa}\label{example of strongly l-coloured structure with no relations}
Suppose that $F = \mbbZ_2$ is the 2-element field and, for every $n \in \mbbN$, $\mcG_n$ is 
an $n$-dimensional vector space over $F$, as in Example~\ref{example of vector space over finite field}.
Let $l = 2$. For every 2-dimensional subspace $V$ of $\mcG_n$ ($n \geq 2$), the number
of 1-dimensional subspaces of $V$ is $2^2 -  1 = 3 > l$.
So, with these assumptions, if $\mcM$ is strongly 2-coloured then $R^\mcM = \es$ for every $R\in \Vrel$.
But if, instead, $l > 2$ then it is possible that $R^\mcM \neq \es$ 
for every $R \in V_{rel}$.
\end{exa}

\noindent
Since strongly $l$-coloured structures in which $R$ is interpreted as the empty set for
every $R \in V_{rel}$ are not so interesting, the above example motivates the
following definition and assumption. Observe that by Assumption~\ref{assumptions on languages}~(6),
if $n \in \mbbN$ and $\mcG'$ is a closed substructure of $\mcG_n$, then $\mcG' \cong \mcG_m$
for some $m \in \mbbN$.

\begin{defi}\label{definition of T(s) and t}
(i) If $A$ is a closed subset of $G_n$, for some $n$, then let $D(A)$ be the 
number of 1-dimensional subspaces of $A$.\\
(ii) For every $d \in \mbbN$, let $t(d)$ be the maximum of $D(A)$ where $A$ is a subspace
of $\mcG_n$ for some $n$ and $\dim_{\mcG_n}(A) \leq d$.\\
(iii) Let $t = \max\{ d \in \mbbN : t(d) \leq l\}$.
\end{defi}

\noindent
Note that if $\dim_{\mcG_n}(A) > l$ then $D(A) > l$, so $t \leq l$.
In Example~\ref{example of strongly l-coloured structure with no relations}
we have $t(0) = 0, t(1) = 1, t(2) = 3$ and $t(3) = 8$, so if $l = 2$ then $t = 1$.
If, in the same example, $l \in \{3, \ldots, 7\}$, then $t = 2$; if $l = 8$, then $t = 3$, and so on.
In order that the arguments that follow work out {\em we assume that $t \geq 2$}.
This is equivalent with the condition, in Theorem~\ref{definability of strong colourings},
that for every $n \in \mbbN$, every 2-dimensional subspace of $\mcG_n$ has at most~$l$ different
1-dimensional subspaces.

Let the relation symbols of $V_{rel}$ be $R_1,...,R_\tau$ with arities $r_1,...,r_\tau \geq 2$. 
Without loss of generality we assume that $r_1$ is the smallest among these arities. 
By Assumption~\ref{assumptions on languages} there are $L_{pre}$-formulas $\theta_0$ and $\theta_1$
such that if $\mcM$ is strongly $l$-coloured (or strongly $l$-colourable),
then $\mcM \models \theta_0(a)$ $\Longleftrightarrow$ $a \in \cl(\es)$, and
$\mcM \models \theta_1(a,b)$ $\Longleftrightarrow$ $b \in \cl(a)$.
Since $L_{pre} \subseteq L_{rel}$, this justifies the use of notation like `$x \in \cl(y)$' 
when specifying $L_{rel}$-formulas.

The idea of the formula $\xi(x,y)$ defined below is that whenever $a$ and $b$ do not belong to the closure of $\es$
and $\xi(a,b)$ holds, then $a$ and $b$ must have the same colour (and the converse implication
holds if the structure that $a$ and $b$ come from has the $k$-extension property for large enough $k$).
This is achieved by saying that if $a$ and $b$ are independent then there are
$c_2, \ldots c_l$ such that every pair of distinct elements from $\{a, c_2, \ldots, c_l\}$
is independent and belongs to an $R_1$-relationship, thus forcing them to have different
colours. The same is said about pairs of distinct elements from $\{b, c_2, \ldots, c_l\}$,
thus forcing the elements of every such pair to have different colours. As $c_2, \ldots, c_l$
use up $l-1$ colours and there are only $l$ colours, this forces $a$ and $b$ to have the same colour.
In the following definition we will use notation like $\os{l}{\us{i=1}{\exists}}\os{r_1}{\us{j=1}{\exists}}x_{i,j}$ which is the same as saying $\exists x_{1,1} \ldots x_{1,r} \exists x_{2,1} \ldots x_{2,r} \exists x_{l,1} \ldots x_{l,r_1}$. Moreover, we use triples $(\bullet, \bullet, \bullet)$
to index different variables $z_{(\bullet, \bullet, \bullet)}$.

\begin{defi}
Let $\xi(x,y)$ denote the following $\Lrel$-formula: 
\begin{align*}
&x \in \cl(y) \ \vee \ y \in \cl(x) \ \vee \ \exists y_2,...,y_l \os{l}{\us{i=2}{\exists}} \>\> \os{r_1-2}{\us{j=1}{\exists}}z_{(x,i,j)} \os{l}{\us{i=2}{\exists}} \>\> \os{r_1-2}{\us{j=1}{\exists}}z_{(y,i,j)}
\os{l}{\us{k=2}{\exists}}\>\>\os{k-1}{\us{i=2}{\exists}} \>\> \os{r_1-2}{\us{j=1}{\exists}}z_{(k,i,j)} \\
&\Bigg[\bigwedge_{i=2}^{l} \Big(R_1(x,y_i,z_{(x,i,1)},..., z_{(x,i,r_1-2)}) \ \wedge \ y_{i} \notin \cl(x) \ \wedge \  R_1(y,y_i,z_{(y,i,1)},..., z_{(y,i,r_1-2)}) \\ 
&\wedge \ y_{i} \notin \cl(y) \Big) \ \wedge \ 
\bigwedge_{j=2}^{i-1}\Big(R_1(y_i,y_j,z_{(i,j,1)},..., z_{(i,j,r_1-2)} ) \ \wedge \ y_i \notin \cl(y_j)\Big) \Bigg]. 
\end{align*}
\end{defi}

\noindent 
The variables $z_{(k,i,j)},z_{(x,i,j)}$ and $z_{(y,i,j)}$ have the function of ``fillers'' to get the
the right length, $r_1$, of the tuples.
In the case $r_1=2$ they are not needed and $\xi$ will look like this:
\begin{align*} 
x \in  \cl(y) \ &\vee \ y \in \cl(x) \ \vee \\
\exists y_2,...,y_l \Bigg[
&\bigwedge_{i=2}^{l} 
\Big(R_1(x,y_i) \ \wedge \ y_{i} \notin \cl(x) \ \wedge \ R_1(y,y_i) \ \wedge y_{i} \notin \cl(y)\Big)
\ \wedge \\ 
&\bigwedge_{j=2}^{i-1}\Big( R_1(y_i,y_j) \ \wedge \ y_i \notin \cl(y_j)\Big)\Bigg]. 
\end{align*}

\begin{lma}~\label{strl1}
If $\M$ is strongly $l$-coloured, $a,b \in M-\cl(\emptyset)$  and  $\M \models \xi(a,b)$ then $a$ and $b$ have the same colour in $\M$, i.e.  for some $i =1,...,l$ we have  $\M \models P_i(a) \wedge P_i(b)$.
\end{lma}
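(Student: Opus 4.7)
The plan is to proceed by cases on the three disjuncts of $\xi(a,b)$. In the first two cases, $a \in \cl(b)$ or $b \in \cl(a)$, condition~(3) of Definition~\ref{coldef} applies directly: since $a,b \in M - \cl(\es)$ and one lies in the closure of the other, they must carry the same colour in $\mcM$.

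For the main case, assume the existential clause of $\xi(a,b)$ is witnessed by elements $y_2, \ldots, y_l \in M$ (together with fillers $z_{(\cdot,\cdot,\cdot)}$). First I would observe that for each $i \in \{2, \ldots, l\}$, the clauses $y_i \notin \cl(a)$ and $y_i \notin \cl(b)$ combined with $a,b \notin \cl(\es)$ imply $y_i \notin \cl(\es)$ (because $\cl(\es) \subseteq \cl(a)$ by monotonicity). Hence by condition~(1) of Definition~\ref{coldef}, each $y_i$ is assigned exactly one colour in $\mcM$.

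Next, I would apply condition~(5) of Definition~\ref{coldef} (the strong colouring condition) to each $R_1$-tuple supplied by $\xi$. From $\mcM \models R_1(a, y_i, z_{(a,i,1)}, \ldots, z_{(a,i,r_1-2)})$, together with the facts that $a, y_i \in \cl(a, y_i, z_{(a,i,1)}, \ldots) - \cl(\es)$ and that $a, y_i$ are independent (since $y_i \notin \cl(a)$, hence also $a \notin \cl(y_i)$ by the exchange property applied after noting $y_i \notin \cl(\es)$), condition~(5) forces $a$ and $y_i$ to have different colours. The identical argument with the $R_1$-tuple based at $b$ shows $b$ and $y_i$ have different colours. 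Finally, for each $2 \leq j < i \leq l$, the tuple $R_1(y_i, y_j, z_{(i,j,1)}, \ldots)$ together with $y_i \notin \cl(y_j)$ forces $y_i$ and $y_j$ to have different colours.

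Thus the $l-1$ elements $y_2, \ldots, y_l$ are assigned $l-1$ pairwise distinct colours from $\{1,\ldots,l\}$, and both $a$ and $b$ are assigned a colour avoided by all of $y_2, \ldots, y_l$. Since only one colour is left, $a$ and $b$ share that colour, which is the desired conclusion. The argument contains no essential obstacle; it is a pigeonhole dressed in the strong colouring axiom, and the only care needed is the bookkeeping to verify the requisite independence relations so that condition~(5) applies to each of the $\binom{l}{2} + 2(l-1)$ relevant pairs.
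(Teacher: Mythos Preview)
Your proof is correct and follows essentially the same approach as the paper's: split on the disjuncts of $\xi$, and in the existential case use the strong colouring condition~(5) on each $R_1$-tuple to force $a, y_2, \ldots, y_l$ (and likewise $b, y_2, \ldots, y_l$) to carry pairwise distinct colours, then apply pigeonhole. You are in fact more careful than the paper in explicitly verifying $y_i \notin \cl(\es)$ and the independence needed to invoke condition~(5); the only slip is the parenthetical count of pairs, which should be $2(l-1) + \binom{l-1}{2}$ rather than $2(l-1) + \binom{l}{2}$, but this is irrelevant to the argument.
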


\begin{proof}
Let $\M$ be strongly $l$-coloured.
We assume that $\M \models \xi(a,b)$ and $a,b\notin \cl(\emptyset)$. If $a\in \cl(b)$ then we obviously are done by the definition of a colouring, hence assume that $a$ and $b$ are independent.
Each $y_i$ that witness the truth of $\xi(a,b)$ must have a different colour from $a$ since they are independent and included in a tuple $(a,y_i,z_{(a,i,1)},...,z_{(a,i,r_1-2)})\in R_1^\mcM$. In the same way each $y_i$ must have different colour from $b$. 
In the same way as for $a$ and $b$, looking at the definition of $\xi$, we get that $y_i$ and $y_{j}$ must have different colour in $\M$ if $i \neq j$. 
Hence we can conclude that all the elements $a, y_{2},..., y_{l}$ have different colours and all the elements $b,y_{2},...,y_{l}$ have different colours. But since $\M$ is coloured by only $l$ different colours this implies, by the pigeon hole principle, that $a$ and $b$ must have the same colour.
\end{proof}

\noindent 
For the rest of this section, let 
\[k_0  = t(l+1)l.\]
We will now prove that if $\mcM$ is strongly $l$-colourable with the $k_0$-extension property
and $a, b \in M - \cl(\es)$ have the same colour in $\mcM$, then
$\mcM \models \xi(a,b)$.
This will be done by defining a structure $\B$ which has the same relations as described by $\xi$, and showing that this structure is strongly $l$-colourable.
Then we show that if $a$ and $b$ have the same colour in a structure with the $k_0$-extension property, then they are included in a copy of $\B$ in such a way that, by construction of $\B$, $\xi(a,b)$ holds.

\begin{lma} ~\label{strl2}
Let $\M$ be strongly $l$-coloured and assume that $\M$ has the $k_0$-extension property.
If $a,b\in M-\cl(\emptyset)$ and $\M\models P_i(a) \wedge P_i(b)$ for some $i\in\{1,...,l\}$,
then $\M \models \xi(a,b)$.
\end{lma}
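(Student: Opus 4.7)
The plan is to mirror the authors' own roadmap: build a concrete strongly $l$-coloured structure $\mathcal{B}$ that already contains two elements witnessing the existential pattern appearing inside $\xi$, and then transplant a copy of $\mathcal{B}$ into $\mathcal{M}$ using the $k_0$-extension property. First I would dispose of the trivial case $a \in \cl(b)$ or $b \in \cl(a)$, where the first two disjuncts of $\xi$ hold immediately; so assume from now on that $\{a,b\}$ is independent in $\mathcal{M}$.

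To construct $\mathcal{B}$, I would pick a pregeometry structure $\mathcal{H}$ isomorphic to some $\mathcal{G}_m$ (which exists by Assumption~\ref{assumptions on languages}(6)) in which there are $a, b, y_2, \ldots, y_l$ forming an independent set of size $l+1$, together with filler elements $z_{(x,i,j)} \in \cl(a,y_i)$, $z_{(y,i,j)} \in \cl(b,y_i)$, and $z_{(i,j,k)} \in \cl(y_i, y_j)$; let $\mathcal{B}$ be the closed substructure generated by all these elements, so $\dim(\mathcal{B}) = l+1$. Interpret $R_1^{\mathcal{B}}$ as exactly the tuples of the form $(a, y_i, z_{(x,i,1)}, \ldots)$, $(b, y_i, z_{(y,i,1)}, \ldots)$, $(y_i, y_j, z_{(i,j,1)}, \ldots)$, and every other $R \in V_{rel}$ as empty. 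Now colour $\mathcal{B}$: on $\cl(a,b)$ copy the colouring of $\mathcal{M}\uhrc\cl(a,b)$, so in particular $\cl(a)$ and $\cl(b)$ both receive the common colour of $a$ and $b$ in $\mathcal{M}$, call it $1$; then colour $\cl(y_i)$ with colour $i$ for $i=2,\ldots,l$.

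The crucial geometric input is that by Lemma~\ref{pregl1} together with the standard dimension formula in a pregeometry, the 2-dimensional subspaces $\cl(a,y_i), \cl(b,y_i), \cl(y_i,y_j)$ pairwise meet only in one of $\cl(a), \cl(b), \cl(y_i)$ or in $\cl(\es)$, and each of them meets $\cl(a,b)$ only in $\cl(a)$, $\cl(b)$, or $\cl(\es)$. Hence the colours already assigned never clash between these subspaces. Each such 2-dim subspace contains at most $t(2) \leq l$ one-dimensional subspaces (this uses the hypothesis $t \geq 2$); two of them are already coloured, and the remaining at most $l-2$ can be coloured injectively from the $l-2$ remaining colours, rendering each of these 2-dim subspaces strongly multichromatic. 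The 1-dim subspaces of $\mathcal{B}$ not lying in any of these 2-dim subspaces may be coloured arbitrarily. Since every relation tuple in $\mathcal{B}$ has its closure among these 2-dim subspaces, $\mathcal{B}$ is strongly $l$-coloured.

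Finally, I would apply the $k_0$-extension property of $\mathcal{M}$ to the pair $\mathcal{B}\uhrc\cl(a,b) \subseteq \mathcal{B}$: since $\dim(\mathcal{B}) = l+1 \leq t(l+1)\cdot l = k_0$, and since the identification on $\{a,b\}$ is by construction an isomorphism of $l$-coloured structures $\sigma: \mathcal{M}\uhrc\cl(a,b) \to \mathcal{B}\uhrc\cl(a,b)$, one obtains a closed $\mathcal{B}' \subseteq \mathcal{M}$ and an isomorphism $\mathcal{B}' \to \mathcal{B}$ extending $\sigma$. The preimages in $\mathcal{B}'$ of the $y_i$'s and the fillers then serve as the existential witnesses needed to verify $\mathcal{M} \models \xi(a,b)$. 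The main obstacle is the construction and coherent strong $l$-colouring of $\mathcal{B}$; everything else is routine, and the whole argument rests on the pairwise-intersection calculation for the 2-dim subspaces together with the hypothesis $t \geq 2$, which is exactly what guarantees enough colours to render each relevant 2-dim subspace strongly multichromatic.
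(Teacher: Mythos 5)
Your overall strategy --- build a small strongly $l$-coloured structure $\mcB$ realising the pattern of $\xi$ and transplant it into $\mcM$ via the $k_0$-extension property --- is the same as the paper's, but there is a genuine gap at the point where you invoke the extension property. You construct $\mcB$ abstractly, inside some $\mcH\cong\mcG_m$ chosen only so that it contains an independent set of size $l+1$, and then assert that ``the identification on $\{a,b\}$ is by construction an isomorphism of $l$-coloured structures $\sigma:\mcM\uhrc\cl(a,b)\to\mcB\uhrc\cl(a,b)$''. Nothing in your construction delivers such a $\sigma$: already at the $L_{pre}$-level you would need the $2$-dimensional subspace of $\mcH$ spanned by your chosen independent pair to be isomorphic to $\cl_\mcM(a,b)$ \emph{via a map matching the two pairs}, and under the hypotheses in force for this lemma (Assumption~\ref{assumptions on languages}, polynomial $k$-saturation, $t\geq 2$) no such homogeneity is available: distinct $2$-dimensional closed subspaces need not be isomorphic at all, and even when they are, no isomorphism carrying one prescribed independent pair to another is guaranteed. (A statement of that kind, Lemma~\ref{indendent elements map isomorphically to independent elements}, holds only under the much stronger hypotheses of Section~\ref{wc}; Theorem~\ref{definability of strong colourings} is precisely meant to work without them.) Without $\sigma$ the $k_0$-extension property cannot be applied to the pair $\big(\mcB,\,\mcB\uhrc\cl(a,b)\big)$ over $\mcM\uhrc\cl(a,b)$, and if one settles for an arbitrary isomorphism instead, the existential witnesses attach to the images of the wrong pair, so $\mcM\models\xi(a,b)$ does not follow.

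The repair is exactly the paper's move: build $\mcB$ \emph{on a closed subspace of $\mcM$ itself}. Since $\mcM$ has the $k_0$-extension property, $\dim(\mcM)\geq k_0$, so by the exchange property $\{a,b\}$ extends inside $M$ to an independent set $S$ containing the required $y_i$'s and fillers; take the $L_{pre}$-structure $(\mcM\uhrc\cl(S))\uhrc L_{pre}$ and define relations and colours on top of it, copying $\mcM\uhrc\cl(a,b)$ on $\cl(a,b)$, so that the needed $\sigma$ is literally the identity. With this change the rest of your argument goes through: the intersection analysis (Lemma~\ref{pregl1} for pairs of subspaces sharing a generator, submodularity of the rank function for pairs with disjoint generating sets) shows the colouring is well defined, and $t\geq 2$, i.e.\ $t(2)\leq l$, provides enough colours to make each relevant $2$-dimensional subspace strongly multichromatic. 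Two smaller remarks: you should also observe that every $V_{rel}$-relation of $\mcM$ restricted to $\cl(a,b)$ is empty (immediate, since $a$ and $b$ are independent with the same colour in a strongly $l$-coloured structure), so that copying only the colouring really yields an $L$-isomorphism on $\cl(a,b)$; and your use of $2$-dimensional subspaces with possibly repeated fillers is a legitimate simplification of the paper's $t$-dimensional construction, at the cost that it would not adapt to the irreflexive and symmetric variant discussed in Remark~\ref{remark about irreflexive symmetric relations}, which is the reason the paper carries the extra $u$-fillers.
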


\begin{proof}
If we would not be interested in being able to easily adapt the follwing argument
to the context where $R_1$ is always interpreted as an irreflexive and symmetric relation,
then some parts of the argument could be simplified 
(see Remark~\ref{remark about irreflexive symmetric relations}).
Without loss of generality we may assume that $\M\models P_1(a)\wedge P_1(b)$ where $a,b \in M - \cl(\es)$. 
If $a\in \cl(b)$ then $\M \models \xi(a,b)$ by definition, hence assume that $a\notin \cl(b)$. 
Let $\A = \M \reduct \cl(a,b)$ and choose elements $v_2,...,v_l\in M$ and elements $u_{(a,i,j)},u_{(b,i,j)},u_{(k,i,j)} \in M$ for each $2\leq i \leq l, 1\leq j \leq t-2$ and 
$2\leq k\leq l-1, k\not =i$ such that the set $S$
containing exactly the elements $a$, $b$, $v_2, \ldots, v_l$ and $u_{(a,i,j)},u_{(b,i,j)},u_{(k,i,j)}$,
for $i,j,k$ as indicated above, is an independent set.
Such a choice of elements from $M$ is possible because we assume that $\mcM$ has 
the $k_0$-extension property where $k_0 = t(l+1)l$.\footnote{
By Assumption~\ref{assumptions on languages} there is $\mcG_n$ with dimension $k_0$
and hence there is a strongly $l$-coloured structure $\mcB$ with dimension $k_0$.
By Assumption~\ref{assumptions on languages} and the definition of strongly $l$-coloured structures
it follows that $\mcB \uhrc \cl_\mcB(\es) \cong \mcM \uhrc \cl_\mcM(\es)$ and since, 
letting $\mcA = \mcB \uhrc \cl_\mcB(\es)$, 
$\mcM$ has the $\mcB/\mcA$-extension property it follows that $\mcM$ contains an isomorphic copy $\mcB'$
of $\mcB$ and $B'$ contains an independent set of cardinality $k_0$.}
Let $\B_0$ be the substructure of $\mcM \uhrc L_{pre}$ with universe
$\cl(S)$, or equivalently, $\mcB_0 = \big(\mcM \uhrc \cl(S)\big) \uhrc L_{pre}$. 
Note that $\A\uhrc L_{pre}\subseteq \B_0$.
Define $\B$ to be the $L$-structure which is created by expanding $\B_0$ to an $L$-structure in the following way.
We know already that $\A\reduct L_{pre}\subseteq \B\reduct L_{pre}$, so for each $i\in\{1,...,\rho\}$, every $R_i\in V-V_{pre}$, and every $\bar{a}\in A^{r_i}$, we let $\bar{a} \in R_i^\B \Leftrightarrow \bar{a} \in R_i^\A$, and for each $j\in\{1,...,l\}$ and $a \in A$ we let $a \in P_j^\B\Leftrightarrow a \in P_j^\A$. 
In this way we obviously get that $\A\subseteq \B$ as $L$-structures, no matter how we define, in $\B$, 
interpretations on tuples whose range are not included in $A$.
For every relation symbol $R_i\in \Vrel - \{R_1\}$ and $\bar c\in B^{r_i}-A^{r_i}$ let $\B\not \models R(\bar c)$. 
For each $i\in \{2,...,l\}$ and $i<j\leq l$ fix arbitrary elements 
\begin{align*}
&w_{(a,i,1)},...,w_{(a,i,r_1-2)} \ \in \ \cl(a,v_i,u_{(a,i,1)},...,u_{(a,i,t-2)}),\\ 
&w_{(b,i,1)},...,w_{(b,i,r_1-2)} \ \in \ \cl(b,v_{i},u_{(b,i,1)},...,u_{(b,i,t-2)}) \quad \text{ and}\\  &w_{(j,i,1)},...,w_{(j,i,r_1-2)} \ \in \ \cl(v_{j},v_{i},u_{(j,i,1)},...,u_{(j,i,t-2)}).
\end{align*}
Define $R_1^\B$ on $B^{r_1} - A^{r_1}$ in such a way that, for each $2\leq i\leq l$,
\[\B \models \ R_1 (a,v_i,w_{(a,i,1)},...,w_{(a,i,r_1-2)}) \ \wedge \ R_1(b,v_i,w_{(b,i,1)},...,w_{(b,i,r_1-2)}) \]
\[\bigwedge_{k=i+1}^l R_1(v_k,v_i,w_{(k,i,1)},...,w_{(k,i,r_1-2)}), \]
and such that $R_1^\B$ holds for no other tuples than those indicated in the argument above.

In order to complete the definition of $\mcB$ as an $L$-structure we need to
define the interpretations $P_1^\mcB, \ldots, P_l^\mcB$ on elements in $B - A$.
When saying that a 1-dimensional subspace (closed subset) $Q$ gets the colour $i$
we mean that for all $a \in Q - \cl(\es)$, $a \in P_i^\mcB$.
Now we define an $l$-colouring, in $\mcB$, on $B-A$ according to the following five steps, where
we recall that $\cl(a)$ and $\cl(b)$ have colour 1 since, by assumption, $\mcM \models P_1(a) \wedge P_1(b)$:
\begin{itemize}
\item[(1)] For $i = 2, \ldots, l$, $\cl(v_i)$ get the colour $i$.
\item[(2)] By the definition of $t$ and the assumption that $t \geq 2$
it is, for every $i = 2, \ldots, l$, possible to colour all 1-dimensional subspaces of
\( \cl(a,v_i,u_{(a,i,1)},...,u_{(a,i,t-2)}) \)
which have not yet been assigned colours
with the colours $1, \ldots, l$ in such a way that~(1) and~(2) hold and any two different
1-dimensional subspaces of this space get different colours.
\item[(3)] As in~(3) it is possible, for every $i = 1, \ldots, l$, to colour all
1-dimensional subspaces of
\( \cl(b,v_{i},u_{(b,i,1)},...,u_{(b,i,t-2)}) \)
which have not yet been assigned colours
with the colours $1, \ldots, l$ in such a way that~(1) and~(2) hold and any two different
1-dimensional subspaces of this space get different colours.
\item[(4)] As in~(3) and~(4) it is possible, for every $i = 1, \ldots, l$, to colour all
1-dimensional subspaces of
\( \cl(v_{j},v_{i},u_{(j,i,1)},...,u_{(j,i,t-2)}) \)
with the colours $1, \ldots, l$ in such a way that~(1) and~(2) hold and any two different 1-dimensional
subspaces of this space get different colours.
\item[(5)] For every 1-dimensional subspace $Q \subseteq B$ that has not yet been assigned a colour, 
give $Q$ the colour $1$.
\end{itemize}

\begin{claim} The $L$-structure $\B$ is a strongly $l$-coloured structure.
\end{claim}
\begin{proof}[Proof of claim.] By the last part of the definition of the colouring of $\mcB$
and since $\mcA \subseteq \mcB$ where $\mcA$ is a substructure of $\mcM$, we know that each element has attained at least one colour, so colouring condition~(1) of Definition~\ref{coldef} is satisfied. 
The second colouring condition is also satisfied because $\mcA \subseteq \mcB$ and $\mcA \subseteq \mcM$.
If we apply Lemma~\ref{pregl1} we get the following, 
for all $i, j , k$ under consideration,
\begin{align*}
\cl(v_k,v_i,u_{(k,i,1)},...,u_{(k,i,t-2)}) \ &\cap \ \cl(b,v_i,u_{(b,i,1)},...,u_{(b,i,t-2)}) \ = \ \cl(v_{i})
\text{ if } k \neq i,\\
\cl(b,v_{i},u_{(b,i,1)},...,u_{(b,i,t-2)}) \ &\cap \ \cl(a,v_i,u_{(a,i,1)},...,u_{(a,i,t-2)}) \ = \ \cl(v_{i}),\\
\cl(a,v_{i},u_{(a,i,1)},...,u_{(a,i,t-2)}) \ &\cap \ \cl(a,v_{j},u_{(a,j,1)},...,u_{(a,j,t-2)}) \ = \ \cl(a)
\text{ if } i \neq j,\\
\cl(b,v_{i},u_{(b,i,1)},...,u_{(b,i,t-2)}) \ &\cap \ \cl(b,v_{j},u_{(b,j,1)},...,u_{(b,j,t-2)}) \ = \ \cl(b)
\text{ if } i \neq j,\\
\cl(a,v_{i},u_{(a,i,1)},...,u_{(a,i,t-2)}) \ &\cap \ \cl(v_{k},v_{i},u_{(k,i,1)},...,u_{(k,i,t-2)}) \ = \ \cl(v_{i})
\text{ if } i \neq k \text{ and}\\
\cl(v_{k},v_{i},u_{(k,i,1)},...,u_{(k,i,t-2)}) \ &\cap \ \cl(v_{j},v_{i},u_{(j,i,1)},...,u_{(j,i,t-2)})
\ = \ \cl(v_{i}) \text{ if } k \neq j.
\end{align*}
This shows that the steps~(1)--(6) did not give more than one colour to any element of $B$,
and, from the construction it is also clear that dependent elements that do not belong to the 
closure of $\es$ have obtained the same colour.
The colouring restricted to $\A\subseteq \B$ does, since $\A \subseteq \M$ and $\mcM$ is an $l$-coloured structure, satisfy all the colouring conditions.
Hence the third colouring condition is satisfied for $\mcB$.
If $\mcB \models R_p(\bar{a})$ for some $R_p \in V_{rel}$, then either $\bar{a} \subset A^{r_p}$
in which case the colouring conditions~(4) and~(5) are satisfied since $\mcA \subseteq \mcM$
is $l$-coloured, or $R_p = R_1$ and $\bar{a}$ is identical to one of the following tuples
\begin{align*}
&(a,v_i,w_{(a,i,1)},...,w_{(a,i,r_1-2)}), \\
&(b,v_i,w_{(b,i,1)},...,w_{(b,i,r_1-2)}), \ \text{ or } \\
&(v_k,v_i,w_{(k,i,1)},...,w_{(k,i,r_1-2)}),
\end{align*}
for some $i, k$.
By the choice of these tuples and the steps~(1)--(6) above, it follows that
whenever $a, b \in \cl(\bar{a}) - \cl(\es)$ and $a$ is independent from $b$,
then $a$ and $b$ have different colours. Hence colour conditions~(4) and~(5)
are satisfied and we have proved that $\mcB$ is strongly $l$-coloured.
\end{proof}

\noindent \textit{Continuing the proof of Lemma \ref{strl2}.} By the claim, $\B$ is a strongly $l$-coloured $L$-structure and, by the definition of $\mcB$, $\A$ is a closed substructure of $\B$.
Since $B = \cl(S)$ we know that $\dim(B) \leq t(l+1)l = k_0$. 
As $\M$ has the $k_0$-extension property and $\mcA$ is a closed substructure of $\mcM$, there are a closed substructure $\B'\subseteq \M$ and an isomorphism $f : \B' \rightarrow \B$ with which extends the identity function on $\A$,
so $\mcA \subseteq \mcB'$. From the definition of $\B$ we get that $\M\models\xi(a,b)$.
\end{proof}

\noindent Using Lemmas \ref{strl1} and \ref{strl2} we directly get the following:

\begin{cor}~\label{strxi}
If $\M$ is a strongly $l$-coloured structure with the $k_0$-extension property and $a,b\in M-\cl(\emptyset)$ then 
\[\M\models\xi(a,b)\hspace{1cm}\Longleftrightarrow\hspace{1cm}\M\models P_i(a) \wedge P_i(b) \text{ for some }i\in\{1,...,l\}. \]
\end{cor}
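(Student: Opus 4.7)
The corollary is a direct biconditional that falls out by packaging the two preceding lemmas, so my plan is simply to handle each direction by quoting the appropriate lemma, being careful about which hypotheses are actually needed.

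For the $(\Rightarrow)$ direction, I fix $\M$ strongly $l$-coloured with the $k_0$-extension property and $a,b \in M - \cl(\es)$, and assume $\M \models \xi(a,b)$. Since $\M$ is strongly $l$-coloured, Lemma~\ref{strl1} applies directly to $\M$, $a$, $b$ and yields some $i \in \{1,\ldots,l\}$ with $\M \models P_i(a) \wedge P_i(b)$. Observe that the $k_0$-extension property plays no role in this direction; it is needed only to build the witnesses required in the converse.

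For the $(\Leftarrow)$ direction, assume $\M \models P_i(a) \wedge P_i(b)$ for some $i \in \{1,\ldots,l\}$. This is exactly the hypothesis of Lemma~\ref{strl2}, and the $k_0$-extension property of $\M$ (which is part of our standing assumption) is precisely what Lemma~\ref{strl2} requires. Applying it gives $\M \models \xi(a,b)$. Combining the two implications yields the claimed equivalence.

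The plan involves no new ideas and no real obstacle; all the substantive combinatorics, in particular the explicit construction of the strongly $l$-coloured structure $\B$ containing an isomorphic copy of the $\xi$-witnessing configuration and its transport back into $\M$ through the $k_0$-extension property, has already been carried out inside Lemma~\ref{strl2}. The corollary is therefore purely bookkeeping and can be disposed of in a couple of sentences.
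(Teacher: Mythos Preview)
Your proposal is correct and matches the paper's own approach exactly: the paper simply states that the corollary follows directly from Lemmas~\ref{strl1} and~\ref{strl2}, which is precisely the two-direction argument you give. Your additional remark that the $k_0$-extension property is used only in the $(\Leftarrow)$ direction is accurate and worth noting.
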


\begin{rmk}\label{remark about irreflexive symmetric relations}
Suppose that we only consider $L$-structures in which $R_1$ 
(a symbol of $V_{rel}$ with minimal arity) is interpreted as an 
irreflexive and symmetric relation.
Then, for every $i = 2, \ldots, l$, the elements $a,v_i,w_{(a,i,1)},...,w_{(a,i,r_1-2)}$ from the proof of
Lemma~\ref{strl2} must be different from each other, where $w_{(a,i,1)},...,w_{(a,i,r_1-2)} \in \cl_\mcM(a,v_i,u_{(a,i,1)},...,u_{(a,i,t-2)})$,
and similarly for the sequences 
$b,v_i,w_{(b,i,1)},...,w_{(b,i,r_1-2)}$ and $v_k,v_i,w_{(k,i,1)},...,w_{(k,i,r_1-2)}$.
Since $a, b, v_2, \ldots, v_l$ are different, by construction, this can be achieved if, for
every $n$, every closed $t$-dimensional subset of $G_n$ has cardinality at least $r_1$,
where $r_1$ is the arity of $R_1$.
Moreover, in the construction of $\mcB$ we must enlarge $R_1^\mcB$ so that 
whenever $\mcB \models R_1(c_1, \ldots, c_r)$ then $\mcB \models R_1(c_{\pi(c_1)}, \ldots, c_{\pi(c_r)})$
for every permutation $\pi$ of $\{1, \ldots, r\}$. 
These changes do not affect the way in which $\mcB$ is coloured in steps~(1)--(6).
\end{rmk}

\section{Definability of $l$-colourings}~\label{wc}

\noindent
Recall Assumptions~\ref{assumptions on languages}.
In this section we assume throughout that for some finite field $F$ one of the following three cases
hold for every $n \in \mbbN$: (a) $G_n$ is an $n$-dimensional vector space over $F$ and $\cl_{\mcG_n}$
is the linear closure operator, or (b) $G_n$ is an $n$-dimensional affine space over $F$ and
$\cl_{\mcG_n}$ is the affine closure operator, or (c) $G_n$ is an $n$-dimensional projective space
over $F$ and $\cl_{\mcG_n}$ is the projective closure operator.
Moreover, we assume that the language $L_{pre}$ with which $\cl_{\mcG_n}$ is defined, according
to Definition~\ref{spredef}, is either $L_{gen}$ from Example~\ref{generic example of first-order pregeometry}
with the same interpretations of symbols as explained in that example,
or, provided we are in case (a) above, we have $L_{pre} = L_F$ where $L_F$ is like in
Example~\ref{example of vector space over finite field} with the same interpretations of symbols
as explained there.

The assumption about the language $L_{pre}$ guarantees that there is no other structure on $\mcG_n$ than that which is
needed for defining the pregeometry. Therefore the following result, essentially of basic linear algebra,
applies in the present context.

\begin{lma}\label{indendent elements map isomorphically to independent elements}
Let $n, m \in \mbbN$. If $\{a_1, \ldots, a_k\} \subseteq G_n$ and $\{b_1, \ldots, b_k\} \subseteq G_m$
are independent sets, then there is an $L_{pre}$-isomorphism from 
$\cl_{\mcG_n}(a_1, \ldots, a_k)$ to $\cl_{\mcG_m}(b_1, \ldots, b_k)$ which maps $a_i$ to $b_i$
for all $i = 1, \ldots, k$.
\end{lma}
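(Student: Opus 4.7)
The plan is to reduce everything to the classical fact that, over a fixed field $F$, a $k$-dimensional subspace together with a chosen basis is uniquely determined up to isomorphism, and to observe that the two admissible languages $L_{gen}$ and $L_F$ only encode the pregeometry (respectively vector-space) structure. I treat the three cases (a), (b), (c) separately, but the common idea is always: build the isomorphism from a linear bijection between bases.

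First consider case (a) (vector spaces). If $L_{pre} = L_F$, then since $\{a_1, \ldots, a_k\}$ is independent, $\cl_{\mcG_n}(a_1, \ldots, a_k)$ is the $k$-dimensional $F$-subspace of $G_n$ with basis $\{a_1, \ldots, a_k\}$, and every element has a unique representation $\sum_{i=1}^k \lambda_i a_i$ with $\lambda_i \in F$. Set $f\bigl(\sum \lambda_i a_i\bigr) = \sum \lambda_i b_i$. Because $\{b_1, \ldots, b_k\}$ is also an independent set and hence a basis of $\cl_{\mcG_m}(b_1, \ldots, b_k)$ (which is therefore $k$-dimensional as well), $f$ is a bijection, and it is $F$-linear by construction, so it preserves $0$, $+$, and each scalar-multiplication symbol of $L_F$. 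For $L_{pre} = L_{gen}$, use the very same $f$: the only nontrivial symbols are the $P_n$, and $P_n(c_1, \ldots, c_{n+1})$ holds in $\mcG_n$ iff $c_{n+1}$ belongs to the linear span of $c_1, \ldots, c_n$; since a linear bijection takes linear spans to linear spans, $f$ preserves all the $P_n$.

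For case (b) (affine spaces), pick any point of $\cl_{\mcG_n}(a_1, \ldots, a_k)$ as an origin to identify this affine subspace with a $k$-dimensional $F$-vector space (and do the same on the codomain side). This reduces the problem to case (a), and the resulting affine bijection is easily arranged to send $a_i$ to $b_i$; preservation of the $L_{gen}$-predicates follows as before because affine bijections preserve affine spans. For case (c) (projective spaces), write $G_n$ as the set of $1$-dimensional subspaces of an $(n+1)$-dimensional vector space $V$; a projectively independent $k$-tuple of lines lifts to a linearly independent $(k+1)$-tuple of vectors by choosing nonzero representatives together with an auxiliary vector, and applying the vector-space isomorphism of case (a) to these lifts induces, after projectivisation, an $L_{gen}$-isomorphism of the projective closures sending $a_i$ to $b_i$.

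The main point to be checked in every case is that the map just described is well-defined, surjective, and preserves the interpretations of the symbols in $V_{pre}$. Well-definedness and surjectivity are immediate from independence and the dimension count (both sides are $k$-dimensional subspaces of the appropriate kind). Preservation of the $L_{pre}$-symbols is the only mild subtlety: for $L_F$ it is linearity; for $L_{gen}$ it reduces to the fact, used in each of the three geometric contexts, that linear (respectively affine, projective) bijections commute with the corresponding closure operator. There is no real obstacle, only a little bookkeeping across the three geometric settings.
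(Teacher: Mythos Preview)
The paper does not actually prove this lemma; it merely remarks that the chosen language $L_{pre}$ encodes nothing beyond the pregeometry, so that the statement is ``essentially of basic linear algebra,'' and then states the lemma without proof. Your write-up therefore supplies what the paper omits, and your strategy---build a linear/affine/projective bijection from the given independent tuples and observe that the symbols of $L_{pre}$ are preserved because such maps commute with the relevant closure operator---is exactly the intended content of that remark.

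Two small points. In case~(b) you should be explicit about the choice of origins: taking $a_1$ as origin on the domain side and $b_1$ on the codomain side, the linear map determined by $a_i - a_1 \mapsto b_i - b_1$ for $i = 2, \ldots, k$ yields the affine isomorphism $x \mapsto b_1 + f(x - a_1)$, which visibly sends each $a_i$ to $b_i$. In case~(c) your description of the lift is slightly off: $k$ projectively independent points lift to $k$ linearly independent nonzero representatives, not $k+1$, and no auxiliary vector is required. The linear isomorphism between the $k$-dimensional spans of these representatives then projectivises to the desired $L_{gen}$-isomorphism. With these corrections the argument is complete.
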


\noindent
In this section we will prove the same kind of result for $l$-colourable structures with underlying pregeometry $\mcG_n$ for some $n$ as we did for strongly $l$-colourable structures in Section~\ref{sc} (where the assumptions
on $\mcG_n$ made here were not needed).
More precisely, we will show that there are $k_0 \in \mbbN$ and an $L_{rel}$-sentence $\xi(x,y)$ such that
\begin{itemize}
\item if $\mcM$ is $l$-coloured, $a, b \in M - \cl(\es)$
and $\mcM \models \xi(a,b)$, then $a$ and $b$ have the same colour, and
\item if $\mcM$ is $l$-coloured and has the $k_0$-extension property and
$a, b \in M - \cl(\es)$, then
$\mcM \models \xi(a,b)$ if and only if $a$ and $b$ have the same colour.
\end{itemize}
We will define a certain $l$-colourable $L_{rel}$-structure $\mcB$
which will be used to define the sought after formula $\xi(x,y)$.
In order to define such $\mcB$ we will use a theorem from structural Ramsey theory about
colourings of vector spaces, projective spaces and affine spaces over a finite field.

\begin{defi}\label{definition of colourings of vector spaces}
Suppose that $(V, \cl)$ is a pregeometry.\\
(i) We call a function $c : V - \cl(\es) \to \{1, \ldots, l\}$ an {\bf \em $l$-colouring} of $(V, \cl)$
if whenever $a, b \in V - \cl(\es)$ and $a \in \cl(b)$, then $c(a) = c(b)$.\\
(ii) Suppose that $c : V - \cl(\es) \to \{1, \ldots, l\}$ is an $l$-colouring of $(V, \cl)$ and
that $W$ is a subspace (i.e. a closed subset) of $V$.
If all $a \in W - \cl(\es)$ are assigned the same colour by $c$, then we call 
$W$ $c$-{\bf \em monochromatic}.
If, in addition, there is no closed $U \subseteq V$ such that $W$ is a proper subset of $U$,
then we call $W$ {\bf \em maximal $c$-monochromatic}.
\end{defi}

\noindent
The following theorem was proved by Graham, Leeb and Rothschild \cite{GLR} and can
also be found (in perhaps more accessible form) in \cite{GRS} (Theorem~9 and Corollary~10
of Section~2.4). Recall that we have fixed a finite vector space $F$.

\begin{thm}~\label{ramseythm} \cite{GLR}
For all  $d,l\in\mathbb{N}$ there is a number $N(d,l)\in\mathbb{N}$ such that if $n\geq N(d,l)$, and
the pregeometry $(V, \cl)$ is isomorphic with an $n$-dimensional vector space, projective space, or affine space over $F$ and $c$ is an $l$-colouring of $(V, \cl)$, then there exists at least one $c$-monochromatic subspace of $(V, \cl)$ with dimension at least $d$.
\end{thm}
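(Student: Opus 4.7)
The plan is to prove Theorem~\ref{ramseythm} by induction on $d$. The base case $d=1$ is trivial: by the definition of an $l$-coloring, every $1$-dimensional subspace $L$ has $L - \cl(\es)$ monochromatic for $c$, so $N(1,l) = 1$ works in all three cases.

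For the inductive step, I would handle the vector space case first and then reduce the projective and affine cases to it. Assume $N(d, l')$ exists for every $l'$. The central combinatorial tool is a Hales--Jewett-style theorem for parameter words over the finite alphabet $F$: for any finite coloring of $F^n$ (viewed as $n$-letter words over $F$), if $n$ is large enough one can find a $(d+1)$-parameter word all of whose $|F|^{d+1}$ instances receive the same color. Fixing a basis of $V$ identifies $V$ with $F^n$, and a $(d+1)$-parameter word cuts out a $(d+1)$-dimensional affine subspace whose translate through the origin is a $(d+1)$-dimensional linear subspace. Applying this Ramsey result to the coloring of $V - \cl(\es)$ induced by $c$, after a preliminary pigeonhole step that replaces $c$ by a coloring compatible with scalar multiplication within each parameter instance, yields a monochromatic $(d+1)$-dimensional linear subspace once $n \geq N(d+1, l)$ for a suitably chosen $N(d+1, l)$.

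For the projective case, I would realize the $n$-dimensional projective space $P$ over $F$ as the set of $1$-dimensional subspaces of an $(n+1)$-dimensional vector space $\tilde V$. An $l$-coloring of $P$ is then exactly an $l$-coloring of the $1$-dim subspaces of $\tilde V$; monochromatic $d$-dim projective subspaces of $P$ correspond to monochromatic $(d+1)$-dim linear subspaces of $\tilde V$, so one may take $N_{\mathrm{proj}}(d,l) = N_{\mathrm{vec}}(d+1, l) - 1$. The affine case reduces similarly by embedding the affine space as an affine hyperplane not through the origin in an ambient vector space of one dimension higher and transferring any $l$-coloring to a coloring of the $1$-dim subspaces meeting that hyperplane.

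The main obstacle is ensuring that the Hales--Jewett-style Ramsey statement applies cleanly, since the ``combinatorial'' subspaces produced by parameter-word arguments have a restricted form (block-constant vectors with equal entries on each active block), while the coloring sees \emph{all} $1$-dim subspaces of the ambient $V$, not only those spanned by a single parameter instance. This gap is bridged by a preliminary pigeonhole pass that groups $1$-dim subspaces according to a refined equivalence relation before invoking the Ramsey tool; making the bookkeeping uniform across the vector, projective, and affine geometries, and verifying that the parameter-word subspaces are sufficiently rich to recover arbitrary $F$-linear subspaces after iteration, is the most delicate point of the argument.
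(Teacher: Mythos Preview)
The paper does not give a proof of this theorem: it is quoted from \cite{GLR} (Graham, Leeb, Rothschild) and also referenced via \cite{GRS}, and is used as a black box in Section~\ref{wc}. So there is no ``paper's own proof'' to compare against.

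As for your sketch on its own merits: the overall strategy (induction on $d$, Hales--Jewett/parameter-word Ramsey theory, and reducing projective and affine spaces to the vector-space case) is indeed the spirit of the Graham--Leeb--Rothschild argument. However, the step you flag as ``the most delicate point'' is not just delicate bookkeeping; it is the heart of the theorem and your sketch does not actually carry it out. A Hales--Jewett combinatorial subspace of $F^n$ is an affine coset of a very special kind of linear subspace (coordinates partitioned into blocks, constant on each block), and a single application of Hales--Jewett gives you a monochromatic combinatorial subspace, not an arbitrary linear subspace. Your proposed fix---``a preliminary pigeonhole pass that groups 1-dimensional subspaces according to a refined equivalence relation''---is too vague to be a proof: one has to specify what is being coloured (it is not the vectors of $F^n$ but rather $d$-tuples of parameter words, i.e.\ combinatorial $d$-subspaces, to which one applies the Graham--Rothschild parameter-set theorem rather than plain Hales--Jewett), and one must verify that monochromatic combinatorial subspaces of the right dimension actually yield monochromatic $F$-linear subspaces. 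That passage is precisely the content of \cite{GLR}, and as written your proposal asserts it rather than proves it.

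In short: since the paper treats this as a cited result, no proof is expected here; if you do wish to supply one, you would need to either invoke the Graham--Rothschild parameter-set theorem explicitly and carry out the translation to linear subspaces in detail, or simply cite \cite{GLR, GRS} as the paper does.
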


\noindent Let $n = N(2,l)$ for $N(d,l)$ in the above theorem, let $\V = \G_n$ and let $c$ be an $l$-colouring of $\V$. 
By our choice of $n$ and Theorem \ref{ramseythm} there exists at least one $c$-monochromatic subspace of $\V$ of dimension at least two and hence there also exists at least one maximal c-monochromatic subspace
of $\V$ of dimension two. 
Let $W^c_1,...,W^c_{t(c)}$ enumerate all the maximal $c$-monochromatic subspaces of $\V$ of dimension at least two, where $t(c)$ depends on the $l$-colouring $c$. 
(This `$t(c)$' has nothing to do with the `$t(d)$' used in the previous section.)
Let $C$ be the set of all $l$-colourings of $\V$.
For each $c \in C$, choose a basis $\{d_1,...,d_{e_c}\} \subseteq \bigcup_{i=1}^{t(c)}W_i$ for the closure of $\bigcup^{t(c)}_{i=1}W_i$,
so in particular, $\bigcup^{t(c)}_{i=1}W_i$ has dimension $e_c$. 
Then let $e=\min \{e_c : c \in C\}$. 
Choose $c_0 \in C$ such that $e_{c_0} = e$ and for every other $l$-colouring $c\in C$ with $e_c = e$ we have that $t(c) \leq t(c_0)$.
For this colouring $c_0$, let $m=t(c_0)$ and let $W_1 = W^{c_0}_1,...,W_m = W^{c_0}_m$.

Assume that the relation symbol $R\in\Vrel$ has minimal arity $r$ among the relation symbols in $\Vrel$, so $r\geq 2$. 
Let $\B$ be the expansion of $\V = \G_n$ to the language $\Lrel$ defined by, for each relation symbol $Q\in\Vrel - \{R\}$, letting $Q^\B = \emptyset$ and defining $R^\B$ in the following way:
\begin{itemize}
\item If $v_1,v_2,...,v_r \in W_i$ for some $i \in \{1,...,m\}$ or if $\cl(v_1,...,v_r) = \cl(v_j)$ 
for some $j \in \{1, \ldots, r\}$, then $\B\models \neg R(v_1,...,v_r)$.

\item If $\{v_1,...,v_r\}\not\subseteq W_i$ for all $i=1,...,m$ and 
$\cl_\mcV \mcV(v_1,...,v_r) \neq \cl(v_j)$ for all $j = 1, \ldots, r$, then $\B\models R(v_1,...,v_r)$.
\end{itemize}
Notice that the second case holds if and only if the first case does not hold, so $\B$ is unambiguously defined. 
Let $b_1, b_2 \in W_1$ be independent (notice that they exist because of the choice of $W_1$) and let $\A = \B\reduct \cl(\{b_1, b_2\})$. Observe that since $A \subseteq W_1$ it follows that for every $Q \in V_{rel}$, $Q^\A = \es$
Let $B = \{b_1, b_2, \ldots, b_\beta\}$ and let $\chi_\mcB(x_1, \ldots, x_\beta)$ be the
characteristic formula of $\mcB$ with respect to the ordering $b_1, b_2, \ldots, b_\beta$ of $B$.
So for every $L_{rel}$-structure $\mcM$ we have $\mcM \models \chi_\mcB(a_1, \ldots, a_\beta)$
if and only if the map $b_i \mapsto a_i$ is an embedding of $\mcB$ into $\mcM$.

\begin{defi}\label{definition of xi-0}
Let $\xi_0(x,y)$ denote the $L_{rel}$-formula  
\[\exists z_3,...,z_\beta \chi_\B(x,y,z_3,...,z_\beta).\]
\end{defi}

\noindent
We will use $\xi_0(x,y)$ to define the formula $\xi(x,y)$ with the properties that we are looking for,
explained in the beginning of this section. Before defining $\xi(x,y)$ we need to assure that
$\xi_0(x,y)$ has certain properties which are given by Lemmas~\ref{bcol}--\ref{wl4}.
Notice that, by construction, $\mcB \uhrc L_{pre} = \mcV$ so $\mcB$ and $\mcV$
have the same universe $B = V$ and $\cl_\mcB$ is the same as $\cl_\mcV$ 
(which is why we skip the subscripts of `$\cl$').

\begin{lma}~\label{bcol}
The function $c_0 : B - \cl(\es) \to \{1, \ldots, l\}$ is an $l$-colouring of $\mcB$
(according to Definition~\ref{definition of l-colouring}). 
Consequently there exists an $l$-coloured structure $\B_0$ such that $\B_0\uhrc\Lrel = \B$ and 
for every $b \in B - \cl(\es)$ and every $i \in \{1, \ldots, l\}$, $\mcB_0 \models P_i(b)$
if and only if $c_0(b) = i$.
\end{lma}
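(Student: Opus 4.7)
The plan is to verify directly the two conditions in Definition~\ref{definition of l-colouring} for the function $c_0$ on the $L_{rel}$-structure $\mcB$, and then translate back via Definition~\ref{coldef} to obtain $\mcB_0$. Condition~(1), that $\cl(a)$ is $c_0$-monochromatic for every $a \in B - \cl(\es)$, is immediate from the fact that $c_0$ was chosen as an $l$-colouring of the pregeometry $\mcV$ in the sense of Definition~\ref{definition of colourings of vector spaces}, since $\mcB$ and $\mcV$ share the same universe and same closure operator.

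For condition~(2), only the symbol $R$ has any tuples in its interpretation (all other $Q \in V_{rel}$ have $Q^\mcB = \es$), so I only need to handle the case $\mcB \models R(v_1,\dots,v_r)$. By the definition of $R^\mcB$, this forces both $\{v_1,\dots,v_r\} \not\subseteq W_i$ for every $i = 1,\dots,m$, and $\cl(v_1,\dots,v_r) \neq \cl(v_j)$ for every $j = 1,\dots,r$. The second clause should first be converted into a dimension bound: if $\dim(\cl(v_1,\dots,v_r)) \leq 1$ then $\cl(v_1,\dots,v_r)$ is either $\cl(\es)$ or equals $\cl(v_j)$ for some $j$ (picking $v_j \notin \cl(\es)$ when the dimension is~$1$), contradicting that clause. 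Hence $\dim(\cl(v_1,\dots,v_r)) \geq 2$.

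Now I would argue by contradiction: assume $\cl(v_1,\dots,v_r)$ is $c_0$-monochromatic. Because $\mcV = \mcG_n$ is finite-dimensional, any chain of $c_0$-monochromatic subspaces containing $\cl(v_1,\dots,v_r)$ terminates, so $\cl(v_1,\dots,v_r)$ is contained in some maximal $c_0$-monochromatic subspace $U$, and since $\dim(U) \geq \dim(\cl(v_1,\dots,v_r)) \geq 2$, $U$ appears in the enumeration, i.e.\ $U = W_i$ for some $i$. But then $\{v_1,\dots,v_r\} \subseteq \cl(v_1,\dots,v_r) \subseteq W_i$, contradicting the first clause of the definition of $R^\mcB$. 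Thus $\cl(v_1,\dots,v_r)$ is $c_0$-multichromatic, establishing condition~(2).

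Having shown $c_0$ is an $l$-colouring of $\mcB$ in the sense of Definition~\ref{definition of l-colouring}, the final step is the observation, flagged in the remark following that definition, that an $l$-colouring of an $L_{rel}$-structure extends canonically to an $l$-coloured $L$-structure: define $\mcB_0$ by $\mcB_0 \uhrc L_{rel} = \mcB$ and $P_i^{\mcB_0} = c_0^{-1}(i)$ for $i = 1,\dots,l$, and check conditions~(1)--(4) of Definition~\ref{coldef} one by one. Conditions~(1) and~(3) follow from $c_0$ being a function defined exactly on $B - \cl(\es)$ that is constant on each $\cl(b)$; condition~(2) holds because $Q^\mcB = \es$ for $Q \neq R$ and the only way $R(\bar a)$ can hold forces $\dim(\cl(\bar a)) \geq 2$, so in particular $\bar a \not\subseteq \cl(\es)$; condition~(4) is a restatement of the $c_0$-multichromaticity established above. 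The only step requiring any thought is the maximality argument in paragraph three, and that is a one-line appeal to finite-dimensionality of $\mcG_n$, so no real obstacle arises.
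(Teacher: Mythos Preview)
Your proof is correct and follows essentially the same approach as the paper's own proof: both define $\mcB_0$ via $P_i^{\mcB_0} = c_0^{-1}(i)$ and verify the colouring conditions by arguing that if $\mcB \models R(v_1,\dots,v_r)$ then $\cl(v_1,\dots,v_r)$ has dimension at least~$2$ and cannot be $c_0$-monochromatic, since otherwise it would lie in some maximal $c_0$-monochromatic subspace $W_i$, contradicting the definition of $R^\mcB$. The only organisational difference is that you first verify Definition~\ref{definition of l-colouring} and then translate to Definition~\ref{coldef}, whereas the paper works directly with Definition~\ref{coldef}; the mathematical content is identical.
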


\begin{proof}
We define a $L$-structure $\B_0$ by putting colour on $\B$ through the $l$-colouring $c_0$. 
In other words, we let $\mcB_0$ be the expansion of $\mcB$ to $L$ such that
for every $b\in B - \cl(\es)$, $\B_0\models P_{c_0(b)} \wedge \bigwedge_{j\not = c_0(b)} \neg P_j(b)$.
Then $\B_0\reduct \Lrel \cong \B$ so we just need to prove the following:

\begin{claim} $\B_0$ is $l$-coloured.
\end{claim}

\noindent 
We need to check that~(1)--(4) of Definition \ref{coldef} are satisfied.
Conditions~(1) and~(3) are satisfied since $c_0$ is an $l$-colouring of the underlying
pregeometry $\mcV$ of $\mcB$.
Let $R \in V_{rel}$ be as in the definition of $\mcB$.
Let $Q \in V_{rel}$.
If $Q \neq R$ then, by definition of $\mcB$ and $\mcB_0$, $Q^{\mcB_0} = \es$ so~(2) and~(4) 
are satisfied for such $Q$.
Now we consider the case $Q = R$.
Suppose that $\mcB_0 \models R(a_1, \ldots, a_r)$.
By the definition of $\mcB$ and $\mcB_0$ we have 
\begin{itemize}
\item $\{a_1, \ldots, a_r\} \not\subseteq W_i$ for all $i = 1, \ldots, m$, and
\item $\cl(v_1,...,v_r) \neq \cl(v_j)$ for all $j = 1, \ldots, r$.
\end{itemize}
In particular, $\{a_1, \ldots, a_r\} \not\subseteq \cl(\es)$ so~(2) is satisfied.
As  $\cl(v_1,...,v_r) \neq \cl(v_j)$ for all $j = 1, \ldots, r$,
it follows that $\cl_\mcV(a_1, \ldots, a_r)$ has dimension at least 2. 
If $\cl(a_1, \ldots, a_r)$ would be $c_0$-monochromatic then it would
be included in a maximal $c_0$-monochromatic subspace and, by the first point above, 
this would contradict the assumption (in the construction of $\mcB$) 
that $W_1, \ldots, W_m$ enumerate all maximal $c_0$-monochromatic subspaces of $\mcV$ of dimension at least 2.
Hence $\cl(a_1, \ldots, a_r)$ is not monochromatic,
so~(4) is satisfied. 
Now the claim, and hence the lemma, is proved.
\end{proof}

\noindent The structure $\B_0$ from the previous lemma will be used further on.
Recall the definition of the $L_{rel}$-formula $\xi_0(x,y)$ (Definition~\ref{definition of xi-0}).

\begin{lma}~\label{wl1}
If $\mcM$ is an $l$-coloured structure, $v,w\in M-\cl(\emptyset)$ and $\M \models \xi_0 (v,w)$ then $v$ and $w$ have the same colour, i.e. $\M \models P_i(v) \wedge P_i(w)$ for some $i \in \{1,..., l\}$.
\end{lma}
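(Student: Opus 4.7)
The plan is to unpack $\xi_0(v,w)$ as providing an $L_{rel}$-embedding of $\mcB$ into $\mcM$, pull back the colouring of $\mcM$ along this embedding to produce an $l$-colouring of $\mcB$, and finally prove the purely combinatorial claim that every $l$-colouring of $\mcB$ assigns $b_1$ and $b_2$ the same colour.

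For the first two steps, the assumption $\mcM \models \xi_0(v,w)$ yields witnesses $a_3, \ldots, a_\beta \in M$ with $\mcM \models \chi_\mcB(v, w, a_3, \ldots, a_\beta)$, so by Definition~\ref{chardef} the map $f : B \to M$ sending $b_1 \mapsto v$, $b_2 \mapsto w$ and $b_i \mapsto a_i$ for $i \geq 3$ is an $L_{rel}$-embedding of $\mcB$ into $\mcM$. Under the running assumption of Section~\ref{wc}, each $\mcG_n$ is a vector, projective, or affine space over the fixed finite field $F$; Lemma~\ref{indendent elements map isomorphically to independent elements} combined with the fact that over $F$ subspaces of equal dimension have equal cardinality gives $f(\cl_\mcV(X)) = \cl_\mcM(f(X))$ for every $X \subseteq B$. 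Defining $c(b)$ to be the unique $i$ with $\mcM \models P_i(f(b))$, closure preservation together with Definition~\ref{coldef} lets one check that $c$ is an $l$-colouring of $\mcB$: each $1$-dimensional subspace $\cl_\mcV(b)$ maps bijectively onto the monochromatic subspace $\cl_\mcM(f(b))$, so $c$ is constant on it; and whenever $\mcB \models R(\bar b)$ we have $\mcM \models R(f(\bar b))$ with multichromatic closure $\cl_\mcM(f(\bar b)) = f(\cl_\mcV(\bar b))$, which pulls back to $c$-multichromaticity of $\cl_\mcV(\bar b)$.

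The third step, where all the work lies, is to show that for every $l$-colouring $c$ of $\mcB$, $c(b_1) = c(b_2)$. The definition of $R^\mcB$ together with the $l$-colouring condition forces every $c$-monochromatic closed subspace $U$ of $\mcV$ of dimension $\geq 2$ to be contained in $\bigcup_{i=1}^m W_i$: if some $u \in U$ were outside every $W_i$, choosing $u_1 \in U$ independent from $u$ and padding to the $r$-tuple $\bar v = (u_1, u, u_1, \ldots, u_1)$ would give $\mcB \models R(\bar v)$ with $c$-monochromatic closure $\cl(u_1, u) \subseteq U$, contradicting that $c$ is an $l$-colouring. Applied to the maximal $c$-monochromatic subspaces $W^c_1, \ldots, W^c_{t(c)}$ of dimension $\geq 2$, this yields $\cl(\bigcup_j W^c_j) \subseteq \cl(\bigcup_i W_i)$, hence $e_c \leq e$; with the minimality of $e$ this gives $e_c = e$, and then $t(c) \leq m$ by the maximality clause defining $c_0$. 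I would then deduce by a swapping argument that $W_1$ must itself be $c$-monochromatic, so that $b_1, b_2 \in W_1$ forces $c(b_1) = c(b_2)$, and hence $v = f(b_1)$ and $w = f(b_2)$ have the same colour in $\mcM$.

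The main obstacle is the swapping argument at the end: extracting from the extremal data $e_c = e$ and $t(c) \leq m$ the stronger conclusion that the specific subspace $W_1$ is $c$-monochromatic. I expect this to go through as follows: if $W_1$ were not $c$-monochromatic, a hybrid colouring $\tilde c$ agreeing with $c$ off $W_1$ and with $c_0$ on $W_1$ would still be an $l$-colouring of $\mcB$, and its maximal-monochromatic-subspace data would either strictly shrink $e$ or preserve $e$ while strictly increasing $t$ relative to $c_0$, contradicting the minimality of $e$ or the maximality of $t(c_0)$.
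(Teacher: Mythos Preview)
Your first two steps—realizing $\xi_0(v,w)$ as an $L_{rel}$-embedding $f:\mcB \to \mcM$ and pulling back the colouring of $\mcM$ to an $l$-colouring $c$ of $\mcB$—are correct and coincide with the paper's setup. Your argument that every $c$-monochromatic subspace of dimension $\geq 2$ lies in $\bigcup_i W_i$ (whence $e_c = e$ and $t(c) \leq m$) is also fine.

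The gap is the swapping argument. The hybrid $\tilde c$ need not be an $l$-colouring of $\mcB$. Take independent $a \in W_1$, $b \notin W_1$ with $\{a,b\}$ contained in no $W_i$, so that $\mcB \models R(a,b,a,\ldots,a)$ and $\cl(a,b) \cap W_1 = \cl(a)$. Nothing prevents $c$ from assigning colour $2$ to $\cl(a)$ and colour $1$ to all of $\cl(a,b)\setminus\cl(a)$ (this still makes $\cl(a,b)$ $c$-multichromatic), while $c_0$ assigns $\cl(a)$ colour $1$. Then $\tilde c$ is constantly $1$ on $\cl(a,b)$, violating the multichromaticity condition at $R(a,b,\ldots)$. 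Even granting that $\tilde c$ were a valid $l$-colouring, you give no reason why its invariants improve on those of $c_0$: altering $c$ on $W_1$ can destroy $c$-monochromatic subspaces that meet $W_1$ in a line and create new ones elsewhere, so neither $e_{\tilde c} < e$ nor $t(\tilde c) > m$ is evident.

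The paper avoids swapping entirely. It strengthens your containment step: applying the same $\neg R$ argument to \emph{pairs} of independent elements inside a maximal $c$-monochromatic subspace $W^c_j$ shows that $W^c_j$ is contained in a \emph{single} $W_{\pi(j)}$, not merely in the union. Then the dimension equality $e_c = e$ forces the map $\pi$ to be surjective, and $t(c) \leq m$ makes it a bijection. From this bijection the paper reads off that $f(W_1)$ is itself $c$-monochromatic, so $v = f(b_1)$ and $w = f(b_2)$ share a colour in $\mcM$.
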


\begin{proof}
Suppose that $\mcM$ is an $l$-coloured structure.
Observe that if $\mcB'$ is a substructure of $\mcM \uhrc L_{rel}$, then $\mcM$ induces 
a function $c : B' - \cl(\es) \to \{1, \ldots, l\}$
by letting, for every $b' \in B' - \cl(\es)$, $c(b') = i$ if and only if 
$\mcM \models P_i(b')$. 
We call such a function $c$ an $l$-colouring of $\mcB'$ although, strictly speaking,
we can only be sure that it is a colouring of $\mcB'$ (in the sense of 
Definition~\ref{definition of l-colouring}) if $\mcB'$ is a closed substructure of $\mcM$.
Recall the definition of 
$\B$ before Definition~\ref{definition of xi-0} and that $c_0$ is, by Lemma~\ref{bcol},
an $l$-colouring of $\B$. 
The lemma will be proved with the help of the following claim.

\begin{claim}\it Any isomorphism $f$ (if such exists) from the $\Lrel$-structure $\B$ to a substructure $\B' \subseteq \M \uhrc \Lrel$ induces a bijection between the maximal $c_0$-monochromatic subspaces of $\B$ and 
the maximal $c$-monochromatic subspaces of $\B'$, where $c$ is the $l$-colouring of $\B'$ induced by $\M\uhrc \Lrel$.
\end{claim}

\begin{proof}[Proof of the claim.]
Suppose that $f$ is an isomorphism from $\B$ to a substructure $\B'$ of $\mcM \uhrc L_{rel}$.
Assume that $c$ is the $l$-colouring of $\B'$ induced by $\mcM \uhrc L_{rel}$, that is,
for all $b \in B' - \cl(\es)$ and $i \in \{1, \ldots, l\}$, $\M\models P_i(b)$ if and only if $c(b)=i$.
Let $W_1,...,W_m$ enumerate, without repetition, the maximal $c_0$-monochromatic subspaces with dimension
at least 2 of $\V$, and hence of $\B$, 
which where chosen when $\B$ was defined and let $b_1, b_2 \in W_1$ be the two independent elements which 
where chosen in the paragraph before Definition~\ref{definition of xi-0}.
Let $W'_1,...,W'_p$ enumerate, without repetition, the maximal $c$-monochromatic subspaces of $\B' \uhrc L_{pre}$ 
of dimension at least $2$. By Theorem~\ref{ramseythm} this sequence is non-empty.  
We must show that $p = m$ and that there is a permutation $\pi$ of $\{1, \ldots, m\}$
such that $W'_i = f(W_{\pi(i)})$ for all $i = 1, \ldots, m$.

Let $i \in \{1, \ldots, p\}$.
Let $v'_1 \in W'_i$ be arbitrary and, as the dimension of $W'_i$ is at least 2, we can choose 
$v'_2, \ldots, v'_r \in W'_i$ such that $\cl(v'_1, \ldots, v'_r) \neq \cl(v'_j)$ 
for all $j = 1, \ldots, r$. 
We know that $W'_i$ is monochromatic, hence we must have that $\B'\models \neg R(v'_1,...,v'_r)$. 
Choose $v_1, \ldots, v_r \in B$ such that $f(v_j) = v'_j$ for $j = 1, \ldots, r$.
Since $f$ is an isomorphism we have that $\B\models \neg R(v_1,...,v_r)$ and 
$\cl(v_1, \ldots, v_r) \neq \cl(v_j)$ for all $j = 1, \ldots, r$.
By the definition of $\B$, this implies that $v_1,...,v_r \in W_{\pi(i)}$ for some $\pi(i)\in\{1,...,m\}$
(as otherwise we would have $\B \models R(v_1, \ldots, v_r)$, contradicting what we have concluded so far). 

We have already proved that for each $i \in\{1,...,p\}$ there is $\pi(i) \in \{1,...,m\}$ so that 
$W'_i \subseteq f(\W_{\pi(i)})$. 
As $f$ is an isomorphism, and therefore preserves dimension of sets, it follows that 
\[\dim\Big(\bigcup_{i=1}^p W'_i\Big) \leq \dim\Big(\bigcup_{i=1}^m W_i\Big).\]
Observe that the $l$-colouring $c$ of $\B'$ induces an $l$-colouring $c_f$ of $\B$ by letting
$c_f(b) = i$ if and only if $c(f(b)) = i$, for every $b \in B - \cl(\es)$ and every $i \in \{1, \ldots, l\}$.
Therefore, $f^{-1}(W'_1), \ldots, f^{-1}(W'_p)$ is an enumeration of maximal $c_f$-monochromatic
subspaces of $\V$.
It follows that if the above inequality would be strict, then 
$\dim\Big(\bigcup_{i=1}^m W_i\Big)$ would not be minimal among all possible choices of
$l$-colourings of $\V$ and corresponding enumeration of maximal monochromatic subspaces,
and this would contradict the choice of $c_0$.
Hence we conclude that
\[\dim \Big(\bigcup^p_{i=1} W'_i\Big) = \dim \Big( \bigcup^m_{i=1} W_i\Big).\]
Recall that we have showed that for every $i \in \{1, \ldots, p\}$ there is $\pi(i) \in \{1, \ldots, m\}$
such that $W'_i \subseteq f(W_{\pi(i)})$.
Suppose, for a contradiction, that this map $\pi : \{1, \ldots, p\} \to \{1, \ldots, m\}$
is {\em not} surjective.
Then, as $f$ preserves the dimension of sets,
$\bigcup_{i=1}^p f(W_{\pi(i)})$ has strictly smaller dimension than 
$\bigcup_{i=1}^m W_i$. Since $\bigcup_{i=1}^p W'_i \subseteq \bigcup_{i=1}^p f(W_{\pi(i)})$ it
follows that $\bigcup_{i=1}^p W'_i$ has strictly smaller dimension than $\bigcup_{i=1}^m W_i$
which contradicts what we have already proved.
Therefore we conclude that $\pi : \{1, \ldots, p\} \to \{1, \ldots, m\}$ is surjective,
from which it follows that $p \geq m$.

Recall the notation `$t(c)$' used in the definition of $\B$.
By the choice of the colouring $c_0$ of $\V$ (and of $\B$) we have $p=t(c_f)\leq t(c_0)=m$.
As also $p \geq m$ we get $p = m$ and since $\pi$ is surjective (and $p$ finite) it must be bijective. 
\end{proof}

\noindent
Now we continue with the proof of Lemma~\ref{wl1}.
Assume that $\M \models \xi_0(v,w)$. Then there is a $\B' \subseteq \M$ with $B'=\{v,w,b'_3,...,b'_\beta\}$ and $\M\models\chi_\B(v,w,b'_3,...,b_\beta')$. 
Recall the choice of maximal $c_0$-monochromatic subspaces $W_1, \ldots, W_m \subseteq V = B$ and independent 
$b_1, b_2 \in W_1$ in the construction of $\mcB$ before Definition~\ref{definition of xi-0}.
As $\chi_\B$ is the characteristic formula of $\B$ with respect to
an enumeration of $B$ starting with $b_1, b_2, \ldots$, there is an isomorphism $f:\B \rightarrow \B'$ such that $f(b_1) = v$ and $f(b_2)=w$, where $b_1, b_2 \in W_1$. By the claim we have that $f(W_1)$ is a monochromatic subset of $\B'$, with respect to the $l$-colouring $c$ induced by $\mcM$, and since $v,w \in f(W_1)$ it follows that $v$ and $w$ must have the same colour in $\mcM$. 
\end{proof}

\noindent Remember, from before Definition~\ref{definition of xi-0},
that $\A = \B \uhrc \cl(b_1,b_2)$, where $b_1$ and $b_2$ are independent elements of $W_1$. 
Let 
\[k_0 = \max\big(\dim(\mcB), 3\big)\]

\begin{lma}~\label{wl2}
Let $\M$ be an $l$-coloured structure with the $k_0$-extension property,
suppose that $v,w \in M$ and that $\A'$ is a substructure of $\M$ with universe $cl(v,w)$. 
If all elements in $A' - \cl(\es)$ have the same colour and there is an isomorphism $f_0 : \A' \uhrc L_{rel} \rightarrow \A $ such that $f_0(v) = b_1$ and $f_0(w) = b_2$ then $\M \models \xi_0 (v,w)$.
\end{lma}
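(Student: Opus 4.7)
The plan is to exhibit an embedding of the structure $\mcB$ (as an $L_{rel}$-structure) into $\mcM \uhrc L_{rel}$ sending $b_1 \mapsto v$ and $b_2 \mapsto w$, as this directly produces witnesses $b'_3,\ldots,b'_\beta$ with $\mcM \models \chi_\mcB(v,w,b'_3,\ldots,b'_\beta)$, hence $\mcM \models \xi_0(v,w)$. Since $\mcM$ is given to have the $k_0$-extension property (with respect to $l$-coloured structures in $L$), the natural strategy is to produce an $l$-coloured $L$-structure $\mcB^\dagger$ whose reduct to $L_{rel}$ is $\mcB$, containing $\mcA$ as a closed substructure with the same colouring as $\mcA'$ inherits from $\mcM$, and then invoke the $\mcB^\dagger/\mcA$-extension property.

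First I would use the already constructed $l$-coloured structure $\mcB_0$ from Lemma~\ref{bcol}, which satisfies $\mcB_0 \uhrc L_{rel} = \mcB$ and whose colouring $c_0$ is constant on $W_1 - \cl(\es)$, in particular on $A - \cl(\es)$. Let $j = c_0(b_1) = c_0(b_2)$ be this common colour, and let $i$ be the common colour of the elements of $A' - \cl(\es)$ in $\mcM$ (which exists by hypothesis and is well defined since $\dim(A') = 2$ so $A' \not\subseteq \cl(\es)$). If $i \neq j$, apply the transposition $\pi = (i\,j)$ of colours to $\mcB_0$ to obtain $\mcB_0^\pi$; otherwise let $\mcB_0^\pi = \mcB_0$. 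A colour permutation sends $l$-coloured structures to $l$-coloured structures and does not affect $L_{rel}$, so $\mcB_0^\pi$ is still $l$-coloured with $\mcB_0^\pi \uhrc L_{rel} = \mcB$, and in $\mcB_0^\pi$ every element of $A - \cl(\es)$ now has colour $i$.

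Next I would verify that the given $L_{rel}$-isomorphism $f_0 : \mcA' \uhrc L_{rel} \to \mcA$ is actually an isomorphism of $L$-structures when $\mcA$ is regarded as a closed substructure of the $l$-coloured $\mcB_0^\pi$ and $\mcA'$ as a closed substructure of the $l$-coloured $\mcM$. This is immediate: $f_0$ preserves $L_{pre}$ (since $L_{pre} \subseteq L_{rel}$), and on the colour predicates $P_1,\ldots,P_l$ both sides are constantly $P_i$ on non-$\cl(\es)$ elements, while by Assumption~\ref{assumptions on languages}(6) and Remark~\ref{remark about definition of l-coloured structures}(iii) the unique 0-dimensional $l$-coloured reduct forces agreement on $\cl(\es)$ as well.

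Finally, since $\dim(B_0^\pi) = \dim(\mcB) \leq k_0$ and $\mcA$ is a closed substructure of $\mcB_0^\pi$, the $k_0$-extension property of $\mcM$ yields a closed substructure $\mcB^* \subseteq \mcM$ and an $L$-isomorphism $\sigma : \mcB^* \to \mcB_0^\pi$ extending $f_0 : \mcA' \to \mcA$. Reducing to $L_{rel}$ gives an $L_{rel}$-isomorphism $\mcB^* \uhrc L_{rel} \to \mcB$ extending $f_0$, and its inverse is an embedding of $\mcB$ into $\mcM \uhrc L_{rel}$ sending $b_1 \mapsto v$ and $b_2 \mapsto w$, so $\mcM \models \xi_0(v,w)$. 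The only subtle point in the argument is the colour matching step in the second paragraph — without the freedom to permute colours, the $\mcB^\dagger/\mcA$-extension property would not apply, since the $k_0$-extension property is for the full language $L$ and demands that the attached colouring on $\mcA$ agree with what $\mcM$ already assigns.
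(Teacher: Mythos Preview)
Your proof is correct and follows essentially the same approach as the paper: take the $l$-coloured expansion $\mcB_0$ of $\mcB$ from Lemma~\ref{bcol}, permute its colours so that the colour on $A - \cl(\es)$ matches the colour $\mcM$ assigns to $A' - \cl(\es)$, verify that $f_0$ is then an $L$-isomorphism onto the closed substructure $\mcA$, and invoke the $k_0$-extension property to extend $f_0^{-1}$ to an $L$-embedding of the permuted $\mcB_0$ into $\mcM$. Your write-up is in fact slightly more explicit than the paper's in justifying why $f_0$ is an $L$-isomorphism on $\cl(\es)$ and in noting that $\dim(\mcB) \leq k_0$ (the paper writes ``$=$'' where ``$\leq$'' is meant).
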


\begin{proof}
Let $\M$, $v, w \in M - \cl(\es)$ and $\A'$ satisfy the assumptions of the lemma, from which it follows in particular 
that $\A'$ is a closed substructure of $\M$.
Suppose that $f_0 : \A' \uhrc L_{rel} \to \A$ is an isomorphism such that $f_0(v) = b_1$
and $f_0(w) = b_2$. 
Let $\mcB_0$ be the $L$-expansion of $\mcB$ from Lemma~\ref{bcol} and let $\mcA_0 = \mcB_0 \uhrc A$,
so $\mcA_0$ is a closed substructure of $\mcB_0$.
Since, by assumption, all elements of $A' - \cl(\es)$ have the same colour we get $Q^{\mcA'} = \es$
for all $Q \in V_{rel}$.
By the definition of $\mcB_0$, all elements of $A_0 - \cl(\es) = cl(b_1, b_2) - \cl(\es)$ have the
same colour, so $Q^{\mcA_0} = \es$ for all $Q \in V_{rel}$. Let $i$ be the colour of all elements
in $A' - \cl(\es)$. By permuting the colours if necessary we get an $l$-coloured structure $\mcB'_0$
such that if $\mcA'_0 = \mcB'_0 \uhrc A$, then $\mcA'_0 \uhrc L_{rel} = \mcA_0 \uhrc L_{rel}$ and
$f_0$ is an $L$-isomorphism from $\mcA'$ to $\mcA'_0$.
Since $\M$ satisfies the $k_0$-extension property and $\dim(\mcB'_0) = k_0$,
there is an embedding $f:\B'_0 \rightarrow \M$ which extends $f^{-1}_0$. 
Let $\B'$ be the $L_{rel}$-reduct of $\M \uhrc \text{im}(f)$, so $\B\cong\B'$. 
Since $f$ is an $L$-isomorphism (where $L_{rel} \subseteq L$) which extends $f^{-1}_0$ we have that $v,w\in B'$ and
$B'$ can be enumerated in such a way $v,w,b'_3,...,b'_\beta$ that $\M\models \chi_\B (v,w,b'_3,...,b'_\beta)$.
Hence $\M\models\xi_0(v,w)$.
\end{proof}

\begin{lma}~\label{wl3}
Assume that $\M$ is an $l$-coloured structure with the $k_0$-extension property. If $v,w \in \M$ are independent, and have the same colour, then there exists $u \in M - \cl(v,w)$ such that the following holds:
\begin{itemize}
\item[] Let $\A_{v,u} = \M\reduct \cl(v,u)$ and let $\A_{w,u}=\M\reduct \cl(w,u)$. Then $\A_{v,u}$
and $\A_{w,u}$ are monochromatic and there exist isomorphisms $f_{v,u}:A_{v,u}\reduct L_{rel}\rightarrow \A$ and $f_{w,u}: \A_{w,u}\reduct L_{rel} \rightarrow \A$ such that $f_{v,u}(v) = b_1$, $f_{v,u}(u)=b_2$, $f_{w,u}(w)=b_1$ and $f_{w,u}(u)=b_2$. 
\end{itemize}
\end{lma}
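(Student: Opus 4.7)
The plan is to construct an auxiliary $l$-coloured structure $\mcE$ of dimension $3$ that extends a copy of $\mcM\uhrc \cl(v,w)$, in such a way that the two ``new'' $2$-dimensional subspaces of $\mcE$ are monochromatic with empty $V_{rel}$-interpretations. The $k_0$-extension property (recall $k_0 \geq 3$) will then transport $\mcE$ back into $\mcM$, and the required element $u$ will be the preimage of the new independent direction of $\mcE$.

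Concretely, set $\mcD' = \mcM\uhrc \cl_\mcM(v,w)$, a closed $l$-coloured substructure of dimension $2$, and let $i$ be the common colour of $v$ and $w$. By Assumption~\ref{assumptions on languages}, fix $\mcG_n$ of dimension at least $3$ and choose independent $\tilde v, \tilde w, \tilde u \in G_n$; set $\mcE\uhrc L_{pre} = \mcG_n \uhrc \cl_{\mcG_n}(\tilde v, \tilde w, \tilde u)$. Lemma~\ref{indendent elements map isomorphically to independent elements} yields an $L_{pre}$-isomorphism $\psi : \cl_\mcM(v,w) \to \cl_{\mcG_n}(\tilde v, \tilde w)$ with $\psi(v)=\tilde v$ and $\psi(w)=\tilde w$. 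I would complete $\mcE$ to an $L$-structure by transporting the colours and $V_{rel}$-relations of $\mcD'$ onto $\mcD := \mcE\uhrc \cl(\tilde v, \tilde w)$ via $\psi$; assigning the single colour $i$ to every $1$-dimensional subspace of $\mcE$ not contained in $\cl(\tilde v, \tilde w)$; and setting $Q^\mcE = \es$ for every $Q \in V_{rel}$ on tuples not entirely inside $\cl(\tilde v, \tilde w)$. The key verification is that $\mcE$ is $l$-coloured: if $\cl(a)$ is a $1$-dimensional subspace of $\mcE$ not contained in $\cl(\tilde v, \tilde w)$, then $\cl(a)\cap\cl(\tilde v, \tilde w) \subseteq \cl(\es)$, so the two colour rules cannot clash; the boundary subspaces $\cl(\tilde v)$ and $\cl(\tilde w)$ already carry colour $i$ via $\psi$; and condition~(4) of Definition~\ref{coldef} reduces to the corresponding condition for $\mcD'$ because all $V_{rel}$-tuples of $\mcE$, as well as their closures, stay inside $\cl(\tilde v, \tilde w)$.

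Once $\mcE$ is known to be $l$-coloured, the $\mcE/\mcD$-extension property (available since $\dim(\mcE)=3 \leq k_0$) applied to $\mcD' \subseteq \mcM$ with isomorphism $\psi:\mcD'\to\mcD$ produces a closed $\mcE'\subseteq \mcM$ and an $L$-isomorphism $\sigma:\mcE'\to\mcE$ extending $\psi$. Setting $u = \sigma^{-1}(\tilde u)$ gives $u\notin\cl(v,w)$, and $\sigma$ restricts to $L$-isomorphisms $\mcA_{v,u} \to \mcE\uhrc \cl(\tilde v, \tilde u)$ and $\mcA_{w,u} \to \mcE\uhrc \cl(\tilde w, \tilde u)$, forcing $\mcA_{v,u}$ and $\mcA_{w,u}$ to be monochromatic with empty $V_{rel}$-interpretations. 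The required $f_{v,u}$ and $f_{w,u}$ are then obtained by composing these restrictions with $L_{pre}$-isomorphisms from $\cl(\tilde v,\tilde u)$ and $\cl(\tilde w,\tilde u)$ to $\cl(b_1,b_2)$ (sending $\tilde v \mapsto b_1$, $\tilde u \mapsto b_2$, and $\tilde w \mapsto b_1$, $\tilde u \mapsto b_2$, respectively), which exist by Lemma~\ref{indendent elements map isomorphically to independent elements}; since both sides of these maps have empty $V_{rel}$-interpretations, they are automatically $L_{rel}$-isomorphisms. The principal obstacle I expect is the consistency check that makes $\mcE$ into an $l$-coloured structure, particularly ensuring condition~(3) of Definition~\ref{coldef} at the overlap $\cl(\tilde v)\cup\cl(\tilde w)$ between the ``old'' $2$-dimensional subspace $\cl(\tilde v,\tilde w)$ and the ``new'' subspaces $\cl(\tilde v,\tilde u)$, $\cl(\tilde w,\tilde u)$.
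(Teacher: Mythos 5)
Your proposal is correct and follows essentially the same route as the paper: build a $3$-dimensional $l$-coloured structure extending (a copy of) $\mcM\uhrc\cl(v,w)$ in which the two new $2$-dimensional subspaces are monochromatic and $V_{rel}$-free, pull it into $\mcM$ via the $k_0$-extension property, and use Lemma~\ref{indendent elements map isomorphically to independent elements} together with the emptiness of $Q^\mcA$ to get $f_{v,u}$ and $f_{w,u}$. The only (cosmetic) difference is that the paper constructs the auxiliary structure in place, on $\cl_\mcM(v,w,u_0)$ for an element $u_0\in M$ independent from $v,w$, and extends over the identity on $\cl(v,w)$, whereas you build it abstractly inside some $\mcG_n$ and extend over the transport isomorphism $\psi$; the verification that no relations survive on $\cl(\tilde v,\tilde u)$ and $\cl(\tilde w,\tilde u)$ (via Lemma~\ref{pregl1} and the fact that relations never hold within a $1$-dimensional subspace) is the same small check the paper carries out explicitly.
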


\begin{proof}
Suppose that $\mcM$ is $l$-coloured with the $k_0$-extension property 
and assume that $v, w \in M$ are independent from each other
and have the same colour, say 1, without loss of generality.
Let $\mcS$ be any $l$-coloured structure with dimension 3 and let $\mcS' = \cl_\mcS(\es)$.
By the definition of $l$-coloured structures and Assumption~\ref{assumptions on languages},
$\mcS'$ is isomorphic with $\mcM \uhrc \cl_\mcM(\es)$.
Since $k_0 \geq 3$ and $\mcM$ has the $k_0$-extension property it follows that
$\mcM$ has a closed substructure which is isomorphic with $\mcS$.
Therefore $\dim(\mcM) \geq 3$ and hence 
there is $u_0 \in M$ such that $\{v, w, u_0\}$ is an independent set.
Let $C = \cl(v,w)$ and $\mcC = \mcM \uhrc C$.
We will now construct an $l$-coloured structure $\mcD$ and show that $\mcC$ is contained within
an isomorphic copy $\mcD'$ of $\mcD$. The conclusions of the lemma will then
follow easily because of the definition of $\mcD$.

Let $\mcD$ have universe $D = \cl(v,w,u_0)$.
Interpret the symbols in $V_{pre}$ so that $\mcD \uhrc L_{pre}$ is the substructure 
of $\mcM \uhrc L_{pre}$ with universe $D$.
Interpret the symbols of $V_{col} \cup V_{rel}$ so that 
$\mcD \uhrc C = \mcM \uhrc C$.
For all $d \in D - C$ let $\mcD \models P_1(d)$ and $\mcD \not\models P_i(d)$ if $i \neq 1$.
For every $Q \in V_{rel}$, of arity $q$ say, and every $\bar{d} \in D^q - C^q$,
let $\mcD \not\models Q(\bar{d})$.
From the definition it is clear that $\mcD$ is $l$-coloured.

Now we show that if $Q \in V_{rel}$ and $\bar{d} \in \cl(v,u_0)$, then $\mcD \not\models Q(\bar{d})$.
Suppose for a contradiction that $Q \in V_{rel}$, $\bar{d} \in \cl(v,u_0)$ and $\mcD \models Q(\bar{d})$.
By definition of $\mcD$ we have $\bar{d} \in C$ and, by Lemma~\ref{pregl1}, 
$C \cap \cl(v,u_0) = \cl(v)$, so $\bar{d} \in \cl(v) \subseteq C$. By the definition of $\mcD$
we get $\mcM \models Q(\bar{d})$,
which contradicts that $\mcM$ is $l$-coloured (as all members of $\bar{d}$ belong to the same
1-dimensional subspace).
By a similar proof (replace $v$ by $w$) it follows that if
$Q \in V_{rel}$ and $\bar{d} \in \cl(w,u_0)$, then $\mcD \not\models Q(\bar{d})$.

Now let $\mcA_{v, u_0}$ be the substructure of $\mcD$ with universe $A_{v, u_0} = \cl(v, u_0)$.
From the definition of $\mcD$ it follows that all elements of $\mcA_{v, u_0}$ have colour 1.
Recall the definition of the $L_{rel}$-structure $\mcA$ with universe $A = \cl(b_1, b_2)$ before 
Definition~\ref{definition of xi-0}.
As $v$ is independent from $u_0$, it follows from 
Lemma~\ref{indendent elements map isomorphically to independent elements}
that there is an $L_{pre}$-isomorphism $f_{v, u_0} : \mcA_{v, u_0} \uhrc L_{pre} \to \mcA \uhrc L_{pre}$
such that $f_{v, u_0}(v) = b_1$ and $f_{w, u_0}(u_0) = b_2$.
From the definition of $\mcA$ we have $Q^\mcA = \es$ for every $Q \in V_{rel}$.
Since $\mcA_{v, u_0}$ has universe $\cl(v, u_0)$ it follows from what we proved above
and the definition of $\mcA_{v, u_0}$ that $Q^{\mcA_{v, u_0}} = \es$ for every $Q \in V_{rel}$
Therefore, $f$ is also an $L_{rel}$-isomorphism from $\mcA_{v, u_0} \uhrc L_{rel}$
to $\mcA$.
In the same way we can show that if $\mcA_{w, u_0}$ be the substructure of $\mcD$ with universe 
$A_{w, u_0} = \cl(w, u_0)$, then all elements of $\mcA_{w, u_0}$ have colour 1 and there is an isomorphism 
$f_{w, u_0} : \mcA_{w, u_0} \uhrc L_{rel} \to \mcA$ such that
$f_{w, u_0}(w) = b_1$ and $f_{w, u_0}(u_0) = b_2$.

Because of what has been proved above it now suffices to show that
there are a substructure $\mcD' \subseteq \mcM$ such that $\mcC \subseteq \mcD'$
and an isomorphism $f : \mcD \to \mcD'$ such that $f$ is the identity on $C$.
Then $u = f(u_0)$ has the desired property.
Since $\dim(\mcD) = 3 \leq k_0$ and $\mcM$ has the $k_0$-extension property it follows
that, in particular, $\mcM$ has the $\mcD/\mcC$-extension property.
Therefore such $\mcD'$ and $f$ exist.
\end{proof}

\noindent
Now we put together the previous two lemmas to get the following. 

\begin{lma}~\label{wl4}
Assume that $\M$ is $l$-coloured with the $k_0$-extension property. If $v,w \in \M$ are independent and have the same colour then there exists $u\in M- \cl(v,w)$ such that $\M \models \xi_0 (v,u) \wedge \xi_0 (w,u)$.
\end{lma}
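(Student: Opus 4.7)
The plan is to obtain the conclusion as an immediate combination of Lemmas~\ref{wl3} and~\ref{wl2}, which were set up precisely so that this corollary-style result falls out with essentially no new work.

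First I would invoke Lemma~\ref{wl3} on the pair $v,w$. Since $\mathcal{M}$ is $l$-coloured with the $k_0$-extension property and $v,w$ are independent with the same colour, the lemma produces an element $u \in M - \cl(v,w)$ together with the substructures $\mathcal{A}_{v,u} = \mathcal{M} \uhrc \cl(v,u)$ and $\mathcal{A}_{w,u} = \mathcal{M} \uhrc \cl(w,u)$, both monochromatic, and isomorphisms $f_{v,u}: \mathcal{A}_{v,u} \uhrc L_{rel} \to \mathcal{A}$ and $f_{w,u}: \mathcal{A}_{w,u} \uhrc L_{rel} \to \mathcal{A}$ with $f_{v,u}(v) = b_1$, $f_{v,u}(u) = b_2$, $f_{w,u}(w) = b_1$ and $f_{w,u}(u) = b_2$. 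This $u$ is the witness we will use.

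Next I would apply Lemma~\ref{wl2} twice. Applied to the pair $(v,u)$ together with the substructure $\mathcal{A}_{v,u}$ and the isomorphism $f_{v,u}$, its hypotheses are exactly what Lemma~\ref{wl3} has just supplied: all elements of $\cl(v,u) - \cl(\es)$ carry the common colour of $v$, and $f_{v,u}$ sends $v,u$ to $b_1,b_2$. Hence $\mathcal{M} \models \xi_0(v,u)$. Applied analogously to $(w,u)$ with $\mathcal{A}_{w,u}$ and $f_{w,u}$, we get $\mathcal{M} \models \xi_0(w,u)$. Conjoining the two gives the lemma.

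There is no real obstacle here beyond verifying that the data produced by Lemma~\ref{wl3} matches the input required by Lemma~\ref{wl2} in both applications; the possible pitfall is just to check that the common colour of the monochromatic $\mathcal{A}_{v,u}$ and $\mathcal{A}_{w,u}$ can be taken to match through a permutation of colours (which is handled inside Lemma~\ref{wl2}), and that $u \notin \cl(v,w)$ is preserved so that $u$ genuinely witnesses both $\xi_0$-statements. Both points are already built into the statements of the previous lemmas, so the proof will be essentially one paragraph long.
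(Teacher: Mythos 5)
Your proposal is correct and is essentially identical to the paper's own proof: Lemma~\ref{wl3} supplies the witness $u$ together with the monochromatic structures $\mcA_{v,u}$, $\mcA_{w,u}$ and the isomorphisms onto $\mcA$ sending $v,u$ (respectively $w,u$) to $b_1,b_2$, and then Lemma~\ref{wl2} is applied twice to conclude $\mcM \models \xi_0(v,u) \wedge \xi_0(w,u)$. Nothing further is needed.
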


\begin{proof}
By Lemma \ref{wl3}, there is $u \in M- \cl(v,w)$ and monochromatic structures $\A_{v,u}, \A_{w,u}\subseteq \M$ with $A_{v,u}=\cl(v,u)$ and $A_{w,u}=\cl(w,u)$, isomorphisms $f_{v,u}:\A_{v,u}\reduct \Lrel \rightarrow \A$ and $f_{w,u}:\A_{w,u}\reduct \Lrel \rightarrow \A$  with $f_{v,u}(v)=f_{w,u}(w)=a$ and $f_{v,u}(u)=f_{w,u}(u) = b$. So by Lemma \ref{wl2} and $f_{v,u}$ we get $\M\models\xi_0(v,u)$ and then, using $f_{w,u}$ and
by Lemma~\ref{wl2}, we get $\M\models\xi_0(w,u)$.
Hence $\M\models\xi_0(v,u)\wedge\xi_0(w,u)$.
\end{proof}

\noindent We can finally define the desired $L_{rel}$-formula $\xi(x,y)$ and prove,
in Corollary~\ref{wcor}, that it has the property of telling whether elements have the same colour or not.

\begin{defi}
Let $\xi(x,y)$ be the $L_{rel}$-formula
\[x \in \cl(y) \ \vee \ \exists z (\xi_0 (x,z) \wedge \xi_0(y,z)).\]
\end{defi}

\noindent
Observe that since $\xi_0(x,y)$ is an existential formula, that is, $\xi_0(x,y)$ 
has the form $\exists \bar{z} \psi(x,y,\bar{z})$ where $\psi$ is quantifier free,
it follows, from the assumptions in the beginning of this section,
that $\xi(x,y)$ is logically equivalent to an existential formula.
This will be used in Section~\ref{cc}.

\begin{cor}~\label{xi(x,y) implies x and y have the same colour}
If $\M$ is $l$-coloured, $v,w\in M-\cl(\emptyset)$, $v \notin \cl(w)$ and $\M \models \xi(v,w)$, 
then $v$ and $w$ have the same colour, i.e. $\M \models P_i(v) \wedge P_i(w)$ for some $i \in \{1,..., l\}$.
\end{cor}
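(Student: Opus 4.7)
The plan is to unpack the definition of $\xi$ and then invoke Lemma~\ref{wl1} twice, using $z$ as a bridge between $v$ and $w$. Suppose $\mcM$ is $l$-coloured, $v,w \in M - \cl(\es)$, $v \notin \cl(w)$, and $\mcM \models \xi(v,w)$. Since $v \notin \cl(w)$, the first disjunct of $\xi(v,w)$ fails, so there exists $z \in M$ with $\mcM \models \xi_0(v,z) \wedge \xi_0(w,z)$.

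The small point to verify is that $z \notin \cl(\es)$, so that Lemma~\ref{wl1} is applicable. By definition of $\xi_0(v,z)$, there are $b_3', \ldots, b_\beta' \in M$ such that $\mcM \models \chi_\mcB(v, z, b_3', \ldots, b_\beta')$; since $\chi_\mcB$ is the characteristic formula of $\mcB$, the map $b_1 \mapsto v$, $b_2 \mapsto z$, $b_i \mapsto b_i'$ is an embedding of $\mcB$ into $\mcM \uhrc L_{rel}$. The $L_{pre}$-formulas $\theta_n$ defining closure are part of $L_{rel}$, and by Assumption~\ref{assumptions on languages}~(5) they describe closure relations correctly in both $\mcB$ and $\mcM$. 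Since $b_1, b_2 \in W_1$ were chosen independent in $\mcV = \mcB \uhrc L_{pre}$, the images $v, z$ are independent in $\mcM$; in particular, as $v \notin \cl(\es)$, the element $z$ is also not in $\cl(\es)$.

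Now Lemma~\ref{wl1} applied to $\mcM \models \xi_0(v,z)$ yields $i \in \{1,\ldots,l\}$ with $\mcM \models P_i(v) \wedge P_i(z)$, and applied to $\mcM \models \xi_0(w,z)$ yields $j \in \{1,\ldots,l\}$ with $\mcM \models P_j(w) \wedge P_j(z)$. Since each element of $M - \cl(\es)$ has a unique colour by condition~(1) in Definition~\ref{coldef}, the two colours assigned to $z$ must coincide, so $i = j$ and therefore $\mcM \models P_i(v) \wedge P_i(w)$, as required.

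There is no real obstacle here; the only slightly subtle step is confirming $z \notin \cl(\es)$ so that Lemma~\ref{wl1} applies, and this is immediate from the fact that characteristic formulas preserve (in)dependence relations between the distinguished elements $b_1, b_2$.
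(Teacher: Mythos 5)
Your proof is correct and follows essentially the same route as the paper: extract the witness $z$ from the second disjunct of $\xi(v,w)$ and apply Lemma~\ref{wl1} twice, transferring the colour through $z$. The only difference is that you explicitly verify $z \notin \cl(\es)$ (via the embedding of $\mcB$ given by $\chi_\mcB$), a hypothesis of Lemma~\ref{wl1} that the paper's proof leaves implicit.
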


\begin{proof}
If $\M$ is $l$-coloured, $v,w\in M-\cl(\emptyset)$, $v \notin \cl(w)$ and $\M \models \xi(v,w)$, 
then $\M \models \xi_0(v,u) \wedge \xi_0(w,u)$ for some $u \in M$.
By Lemma~\ref{wl1}, $v$ has the same colour as $u$ and $u$ has the same colour as $w$.
Hence, $v$ and $w$ have the same colour.
\end{proof}

\begin{cor}~\label{wcor}
Let $\M$ be $l$-coloured with the $k_0$-extension property.
If $v,w\in M - \cl(\emptyset)$ then
\[ \M \models \xi(v,w) \hspace{10pt} \Longleftrightarrow \hspace{10pt} \text{v and w have the same colour}.\]
\end{cor}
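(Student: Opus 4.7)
The plan is to combine Corollary~\ref{xi(x,y) implies x and y have the same colour}, Lemma~\ref{wl4}, and the exchange property to handle the two implications separately. Note first that for $v, w \in M - \cl(\es)$, the exchange property gives $v \in \cl(w) \Leftrightarrow w \in \cl(v)$; in this dependent case, colouring condition~(3) of Definition~\ref{coldef} already forces $v$ and $w$ to have the same colour and makes the first disjunct of $\xi(v,w)$ true.

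For the forward direction, assume $\M \models \xi(v,w)$ with $v, w \in M - \cl(\es)$. If $v \in \cl(w)$ there is nothing to prove, so suppose instead that there is $z \in M$ with $\M \models \xi_0(v,z) \wedge \xi_0(w,z)$. The witness $z$ cannot lie in $\cl(\es)$: the definition of $\xi_0$ requires $z$ to realise the role of $b_2$ inside an embedded copy of $\mcB$, and $b_1, b_2 \in W_1$ were chosen to be independent, so in particular $b_2 \notin \cl(\es)$. Hence Lemma~\ref{wl1} applies to both $\xi_0(v,z)$ and $\xi_0(w,z)$, giving that $v$ has the same colour as $z$ and $z$ has the same colour as $w$, so $v$ and $w$ agree on colour. (This is essentially Corollary~\ref{xi(x,y) implies x and y have the same colour}.)

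For the backward direction, suppose $v$ and $w$ have the same colour and $v, w \in M - \cl(\es)$. If $v \in \cl(w)$ then the first disjunct of $\xi(v,w)$ is satisfied. Otherwise $v$ and $w$ are independent; since $\M$ has the $k_0$-extension property, Lemma~\ref{wl4} produces $u \in M - \cl(v,w)$ with $\M \models \xi_0(v,u) \wedge \xi_0(w,u)$, which witnesses the existential disjunct of $\xi(v,w)$.

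No step is really hard: the two directions are short consequences of the lemmas already established. The only minor subtlety is justifying that the intermediate witness $z$ in the forward direction lies outside $\cl(\es)$ so that Lemma~\ref{wl1} may be applied, and this is handled by unpacking the definition of $\xi_0$ via the characteristic formula of $\mcB$.
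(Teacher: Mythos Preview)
Your proposal is correct and follows essentially the same route as the paper's own proof: one direction via Lemma~\ref{wl4} (using the $k_0$-extension property to produce the intermediate witness $u$), the other direction via Lemma~\ref{wl1} through an intermediate $z$, with the dependent case handled trivially by the first disjunct of $\xi$. Your extra care in verifying that the witness $z$ lies outside $\cl(\es)$ before invoking Lemma~\ref{wl1} is a point the paper's proof of Corollary~\ref{xi(x,y) implies x and y have the same colour} passes over silently, so this is a welcome refinement rather than a deviation.
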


\begin{proof}
Suppose that $v, w \in M - \cl(\es)$ have the same colour.
If $v$ and $w$ are dependent then $v \in \cl(w)$ so $\M \models \xi(v,w)$.
Assume that $v\notin \cl(w)$. By Lemma \ref{wl4}, there is $u \in M$ such that $\M\models\xi_0(v,u)\wedge\xi_0(w,u)$, so by the definition of $\xi$ we get that $\M \models\xi(v,w)$. \\
The opposite direction is proved like Corollary~\ref{xi(x,y) implies x and y have the same colour}
\end{proof}

\section{Almost sure properties and an axiomatisation of the limit theory}~\label{cc}

\noindent
In this section we show that if an $L_{rel}$-formula $\xi(x,y)$ exists which defines the $l$-colouring
of an (strongly) $l$-coloured structure in the sense of~(1) and~(2) of Theorem~\ref{main theorem, general form}
below, then we can draw some conclusions about the asymptotic structure of (strongly) $l$-colurable 
structures and, if $\xi(x,y)$ is existential then we get an explicit axiomatisation of the set of 
sentences with limit probability 1. Theorem~\ref{main theorem, general form} together with
the results in Sections~\ref{preliminaries}--\ref{wc} imply the main results stated in Section~\ref{introduction}.

We recall the notation from Definitions~\ref{coldef},~\ref{definition of probability measures}
and~\ref{definition of probability measure on C-n and S-n}.
So in particular, $\mbK_n$ denotes the set of $l$-coloured structures $\mcM$ such that $\mcM \uhrc L_{pre} = \mcG_n$ and $\delta_n^\mbK$ denotes the dimension conditional measure on $\mbK_n$.
$\mbC_n$ denotes the set of $l$-colourable structures $\mcM$ such that $\mcM \uhrc L_{pre} = \mcG_n$
and $\delta_n^\mbC$ is the probability measure on $\mbC_n$ derived from $\delta_n^\mbK$.
Similarly, $\mb{SK}_n$ denotes the set of strongly $l$-coloured structures $\mcM$ such that
$\mcM \uhrc L_{pre} = \mcG_n$ and $\delta_n^{\mb{SK}}$ denotes the dimension conditional measure 
on $\mb{SK}_n$. $\mbS_n$ denotes the set of strongly $l$-colourable structures $\mcM$
such that $\mcM \uhrc L_{pre} = \mcG_n$ and $\delta_n^\mbS$ is the probability measure on $\mbS_n$
derived from $\delta_n^{\mb{SK}_n}$.
For any $L_{rel}$-sentence $\varphi$, let
\[\delta_n^\mbC(\varphi) \ = \ \delta_n^\mbC\big( \{\mcM \in \mbC_n : \mcM \models \varphi\} \big),\]
and similarly for $\delta_n^\mbS(\varphi)$.
In this section we will prove the following result.

\begin{thm}\label{main theorem, general form}
Suppose that the conditions of Assumption~\ref{assumptions on languages} hold,
that $\mbG = \{\mcG_n : n \in \mbbN\}$ is polynomially $k$-saturated for every $k \in \mbbN$ and that
there exists an $L_{rel}$-formula $\xi(x,y)$ and natural number $k_0$ with the following properties.
\begin{itemize}
\item[(1)] If $\mcM$ is an $l$-coloured structure, $a, b \in M - \cl_\mcM(\es)$
and $\mcM \models \xi(a,b)$,
then $a$ and $b$ have the same colour (i.e. $\mcM \models P_i(a) \wedge P_i(b)$ for some $i \in \{1, \ldots, l\}$).
\item[(2)] If $\mcM$ is an $l$-coloured structure
that has the $k_0$-extension property and $a, b \in M - \cl_\mcM(\es)$,
then $\mcM \models \xi(a,b)$ if and only if $a$ and $b$ have the same colour.
\end{itemize}
Then the following hold:
\begin{itemize}
\item[(i)] The $\delta_n^\mbC$-probability that the following holds for $\mcM \in \mbC_n$
approaches 1 as $n \to \infty$:
\begin{itemize}
\item[] For all $a,b \in M - \cl_\mcM(\es)$, $\mcM \models \xi(a,b)$ if and only if
{\rm every} $l$-colouring of $\mcM$ gives $a$ and $b$ the same colour.
\end{itemize}

\item[(ii)] $\lim_{n\to\infty} 
\delta_n^\mbC\big(\{\mcM \in \mbC_n : \text{ $\mcM$ has a unique $l$-colouring}\}\big) = 1$.

\item[(iii)] $\lim_{n\to\infty} 
\delta_n^\mbC\big(\{\mcM \in \mbC_n : \text{ $\mcM$ is not $l'$-colourable if $l' < l$}\}\big) = 1$.

\item[(iv)] Suppose, in addition, that $\xi(x,y)$ is an existential formula.
Then the set of $L_{rel}$-sentences $\varphi$ such that
$\lim_{n\to\infty} \delta_n^\mbC(\varphi) = 1$ forms a countably categorical theory which can
be given an explicit axiomatization where every axiom is logically equivalent to a sentence
of the form $\forall \bar{x} \exists \bar{y} \psi(\bar{x}, \bar{y})$ where $\psi$ is quantifier-free. 
\end{itemize}
If the assumptions hold for {\rm strongly} $l$-coloured structures, then~(i)--(iv)
hold if every occurence of $\mbC$ is replaced by $\mbS$.
\end{thm}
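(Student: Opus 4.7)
The plan is to deduce all four parts from Theorem~\ref{ccv1} together with the translation between $\delta_n^\mbK$ and $\delta_n^\mbC$ given by Definition~\ref{definition of probability measure on C-n and S-n}. For (i), I would apply Theorem~\ref{ccv1}(i) to obtain that the set $\mcE_n \subseteq \mbK_n$ of $l$-coloured structures with the $k_0$-extension property has $\delta_n^\mbK$-measure tending to $1$; by the definition of $\delta_n^\mbC$, the image $\mcE_n^\mbC = \{\mcM^+ \uhrc L_{rel} : \mcM^+ \in \mcE_n\}$ has $\delta_n^\mbC$-measure tending to $1$. For $\mcM \in \mcE_n^\mbC$ and $a,b \in M - \cl(\es)$: if $\mcM \models \xi(a,b)$ then, since $\xi$ is an $L_{rel}$-formula, every $l$-coloured expansion $\mcN$ of $\mcM$ satisfies $\mcN \models \xi(a,b)$, so by hypothesis~(1) each $l$-colouring of $\mcM$ assigns $a$ and $b$ the same colour; conversely, if $\mcM \not\models \xi(a,b)$, fix any expansion $\mcM^+ \in \mcE_n$, for which $\mcM^+ \not\models \xi(a,b)$, and the contrapositive of~(2) then produces different colours in $\mcM^+$, giving an $l$-colouring of $\mcM$ that witnesses inequivalence.

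For (ii) and (iii), on $\mcE_n^\mbC$ the formula $\xi$ defines an equivalence relation on $M - \cl(\es)$, being the intersection of the ``same colour'' relations over all $l$-colourings. Assuming without loss of generality that $k_0 \geq 1$, the $k_0$-extension property applied to any chosen expansion $\mcM^+ \in \mcE_n$ realizes every colour: for each $i \in \{1, \ldots, l\}$ a $1$-dimensional $l$-coloured extension of $\mcM^+ \uhrc \cl(\es)$ in which the new subspace has colour $i$ embeds into $\mcM^+$. By hypothesis~(2), $\xi$ therefore has exactly $l$ equivalence classes on $M - \cl(\es)$. Any $l$-colouring of $\mcM$ induces a partition with at most $l$ classes which, by~(1), coarsens the $\xi$-partition, and so must equal it up to relabeling; this yields (ii). For (iii), a supposed $l'$-colouring of $\mcM$ with $l' < l$ may be viewed as an $l$-colouring using at most $l' < l$ colours, whose class partition has $< l$ classes, contradicting that it must coincide with the $l$-class $\xi$-partition.

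For (iv), Corollary~\ref{0-1 law for l-colourable structures} makes $T^\mbC = \{\varphi \in L_{rel} : \lim_n \delta_n^\mbC(\varphi) = 1\}$ complete, and Theorem~\ref{ccv1}(i)--(ii) together with the standard argument via extension axioms yield that $T^\mbK = \{\varphi \in L : \lim_n \delta_n^\mbK(\varphi) = 1\}$ is countably categorical with countable model $\mathcal{M}^*$. I would prove countable categoricity of $T^\mbC$ by showing that any countable $\mcN \models T^\mbC$ admits an $L$-expansion $\mcN^+ \models T^\mbK$, obtained by labeling the $l$ $\xi$-classes of $\mcN$ (guaranteed by (ii)--(iii)) as the $l$ colours, whence $\mcN^+ \cong \mathcal{M}^*$ and so $\mcN \cong \mathcal{M}^* \uhrc L_{rel}$. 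The explicit axiomatization of $T^\mbC$ would consist of: (a) the pregeometry axioms for $\mbG$ (standard $\forall \exists$ form with quantifier-free matrix via the $\theta_n$'s of Assumption~\ref{assumptions on languages}); (b) axioms stating that $\xi$ is an equivalence relation on $M - \cl(\es)$ with at most $l$ classes and that no $V_{rel}$-tuple is $\xi$-monochromatic, all of which take $\forall \exists$ form with quantifier-free matrix after absorbing each positive $\xi$-atom into its existential witnesses (using that $\xi$ is existential); and (c) one \emph{$l$-colour compatible extension axiom} for each closed extension $\mcA \subseteq \mcB$ of $l$-coloured structures with $\dim(B) = \dim(A)+1$, asserting that every realization of the characteristic formula of $\mcA$ together with its positive $\xi$-type extends to a realization of the characteristic formula of $\mcB$ together with its positive $\xi$-type. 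Because $\xi$ is existential, the positive $\xi$-atoms in each such axiom can be absorbed into the existential quantifier on the extension variables, so every axiom is logically equivalent to a sentence of the form $\forall \bar{x}\exists \bar{y}\,\psi(\bar{x},\bar{y})$ with $\psi$ quantifier-free.

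The main obstacle is that negative $\xi$-information is $\Pi_1$ in $L_{rel}$ and so cannot appear inside the quantifier-free matrix of a $\forall \exists$ sentence, yet the full $\xi$-type of an extension must be pinned down for the back-and-forth to succeed. The resolution is that negative $\xi$-facts are forced globally, not axiom by axiom: once~(b) caps the number of $\xi$-classes at $l$ and sufficiently many positive extension axioms from~(c) have produced $l$ pairwise $\xi$-inequivalent representatives, any countable model of the axioms has exactly $l$ classes, and the back-and-forth between $\mcN$ and $\mathcal{M}^* \uhrc L_{rel}$ that matches $L_{rel}$- and positive-$\xi$-types at each stage automatically preserves the negative $\xi$-facts. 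The strong case, with $\mbS_n$ and $\mb{SK}_n$ in place of $\mbC_n$ and $\mbK_n$, is handled by the same argument verbatim, invoking the strong parts of Theorem~\ref{ccv1} and the same extension-axiom machinery for strongly $l$-coloured structures.
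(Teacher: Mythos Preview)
Your treatment of parts~(i)--(iii) is correct and essentially coincides with the paper's: both argue via Theorem~\ref{ccv1}(i), pass to the $L_{rel}$-reduct through Definition~\ref{definition of probability measure on C-n and S-n}, and exploit that on structures with the $k_0$-extension property the $\xi$-partition has exactly $l$ classes refining (hence equalling) the colour partition.

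For part~(iv) your route diverges from the paper's. The paper builds an explicit theory $T_\mbC = T_{pre}\cup T_\xi\cup T_{ext}\cup T_{iso}$ and proves countable categoricity by a direct back-and-forth on $L_{rel}$-structures, using the existentiality of $\xi$ to witness positive $\xi$-facts inside a finite closed substructure (Lemma~\ref{tclemma}). You instead reduce to the countable categoricity of $T^\mbK$ by expanding a countable $\mcN\models T^\mbC$ via its $\xi$-classes. This is a legitimate and conceptually clean alternative, but the step ``$\mcN^+\models T^\mbK$'' is not free: you must show that the $L_{rel}$-translates of the $L$-extension axioms (replace colour predicates by $\xi$-class membership) lie in $T^\mbC$. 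The paper establishes exactly this in Lemma~\ref{ccl1} and Corollary~\ref{colto1}, so your reduction tacitly relies on the same machinery; you should make that dependence explicit.

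There is, however, a genuine gap in your explicit axiomatisation. Your list (a)+(b)+(c) omits anything playing the role of the paper's $T_{iso}$, namely the scheme asserting that every finite closed substructure is isomorphic to some member of $\mbC_m$. Without this, a model of your axioms may contain a closed finite-dimensional subspace whose $L_{pre}$-reduct is not isomorphic to any $\mcG_m$ (your ``pregeometry axioms for $\mbG$'', if read as the general pregeometry scheme $T_{pre}$, do not pin down the isomorphism types of closed subspaces), and the back-and-forth---whether done directly or via your reduction to $T^\mbK$---breaks at the first such subspace, since no extension axiom from~(c) has it as hypothesis. A second, smaller issue: your axiom~(b) ``no $V_{rel}$-tuple is $\xi$-monochromatic'' is not quite the right condition in the presence of a nontrivial pregeometry; Definition~\ref{coldef}(4) requires that the \emph{closure} of an $R$-tuple be multichromatic, and expressing this with a negated $\xi$ inside an existential quantifier over elements of the closure yields a $\forall\exists\forall$-sentence, not $\forall\exists$. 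The paper sidesteps both problems by replacing~(b) with $T_{iso}$ together with the sentence $\varphi_1\wedge\varphi_2$ of Definition~\ref{definition of U}, which forces exactly $l$ $\xi$-classes by embedding a fixed structure $\mcU$ that is not $(l-1)$-colourable.
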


\noindent
Observe that in Sections~\ref{sc} and~\ref{wc} we have proved, under the
assumptions of Theorems~\ref{definability of strong colourings} and~\ref{definability of colourings},
respectively, that there are a number $k_0$ and an $L_{rel}$-formula $\xi(x,y)$ such that~(1)
and~(2) of Theorem~\ref{main theorem, general form} are satisfied.
Moreover, if all $L_{pre}$-formulas $\theta_n$ which define the pregeometry 
are quantifier free, then the formula $\xi(x,y)$ obtained in Section~\ref{sc} and in Section~\ref{wc}
is logically equivalent to an existential formula.
Therefore Theorems~\ref{definability of strong colourings} and~\ref{definability of colourings}
follow from Theorem~\ref{main theorem, general form} and the results in
Sections~\ref{sc} and~\ref{wc}. 
So it remains to prove Theorem~\ref{main theorem, general form}.

\subsection*{Proof of Theorem~\ref{main theorem, general form}}

\noindent
The proof is exactly the same in the case of $l$-colourable structures as in the
case of strongly $l$-colourable structures.
Therefore we will speak only of `$l$-colourable (or coloured) structures' and use
the notations $\mbK_n$, $\mbC_n$, $\delta_n^\mbK$ and $\delta_n^\mbC$.
(If we replace the mentioned terminology and notation with 
`strongly $l$-colourable (or coloured) structures', $\mb{SK}_n$, $\mbS_n$, $\delta_n^{\mb{SK}}$
and $\delta_n^\mbS$, then we have a proof for strongly $l$-colourable structures.)
The general idea of the proof is to first define an $L_{rel}$-theory $T_\mbC$ such that for every 
$\varphi \in T_\mbC$, $\lim_{n\to\infty} \delta_n^\mbC(\varphi) = 1$.
Then it will follow from compactness that $T_\mbC$ is consistent.
The next step is to prove that $T_\mbC$ is complete, which will be done by proving that
it is countably categorical and applying Vaught's theorem. When these steps have been carried out it follows easily,
since (by compactness) $T_\mbC \models \varphi$ implies $\Delta \models \varphi$ for some finite 
$\Delta \subseteq T_\mbC$,
that for every $L_{rel}$-sentence $\varphi$, either $\lim_{n\to\infty} \delta_n^\mbC(\varphi) = 0$
or $\lim_{n\to\infty} \delta_n^\mbC(\varphi) = 1$.

We assume that the conditions of Assumption~\ref{assumptions on languages} hold
and that $\mbG = \{\mcG_n : n \in \mbbN\}$ is polynomially $k$-saturated for every $k \in \mbbN$.
Let $k_0$ be a natural number and $\xi(x,y)$ an
$L_{rel}$-formula such that 
\begin{itemize}
\item[(1)] If $\mcM$ is an $l$-coloured structure, $a, b \in M - \cl(\es)$ and $\mcM \models \xi(a,b)$,
then $a$ and $b$ have the same colour (i.e. $\mcM \models P_i(a) \wedge P_i(b)$ for some $i \in \{1, \ldots, l\}$).
\item[(2)] If $\mcM$ is an $l$-coloured structure
that has the $k_0$-extension property and $a, b \in M - \cl(\es)$,
then $\mcM \models \xi(a,b)$ if and only if $a$ and $b$ have the same colour.
\end{itemize}

\noindent
Without loss of generality we may assume that $k_0 \geq 1$.

\begin{lma}\label{xi-hat is an equivalence relation}
Suppose that $\mcM$ is an $l$-coloured structure that has the $k_0$-extension property.
Then the following hold:
\begin{itemize}
\item[(i)] For every $i \in \{1, \ldots, l\}$ there is $a \in M$ with colour $i$ (i.e. $\mcM \models P_i(a)$). 
\item[(ii)] The formula $\xi(x,y)$ defines an equivalence relation on $M - \cl(\es)$ such that  
for all $a, b \in M - \cl(\es)$ we have
$\mcM \models \xi(a,b)$ if and only if $a$ and $b$ have the same colour.
\item[(iii)] The set $M - \cl(\es)$ is partitioned into exactly $l$ (nonempty) equivalence classes by 
the equivalence relation defined by $\xi(x,y)$.
\end{itemize}
\end{lma}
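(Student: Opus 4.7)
The plan is to prove the three statements in order, treating (i) as the only part that requires an actual construction, and then reading off (ii) and (iii) as essentially formal consequences of hypothesis~(2) together with part~(i). Throughout I will use that the $k_0$-extension property with $k_0 \geq 1$ entails the $\mcB/\mcA$-extension property for any 1-dimensional closed extension $\mcA \subsetneq \mcB$ of $l$-coloured structures.

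For (i), fix $i \in \{1, \ldots, l\}$ and let $\mcA_0 = \mcM \uhrc \cl_\mcM(\es)$. By Assumption~\ref{assumptions on languages}~(6) and Remark~\ref{remark about definition of l-coloured structures}~(iii), $\mcA_0$ is, up to isomorphism, the unique $l$-coloured structure of dimension~$0$, namely (an $l$-coloured expansion of) $\mcG_0$. Since $\lim_{n\to\infty}\dim(\mcG_n) = \infty$, some $\mcG_n$ has a closed 1-dimensional substructure whose restriction to its own $\cl(\es)$ is isomorphic to $\mcA_0 \uhrc L_{pre}$. I would expand this 1-dimensional $L_{pre}$-structure to an $L$-structure $\mcB_i$ by setting $R^{\mcB_i} = \es$ for every $R \in V_{rel}$ and assigning colour $i$ to every element outside $\cl(\es)$ (and no other colours). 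The four clauses of Definition~\ref{coldef} are then immediate: (1) holds by construction, (2) and~(4) are vacuous because $R^{\mcB_i} = \es$, and~(3) holds because every coloured element has the single colour $i$. Applying the $\mcB_i/\mcA_0$-extension property to the inclusion $\mcA_0 \subseteq \mcM$ (via the identity) produces a closed copy $\mcB'_i$ of $\mcB_i$ inside $\mcM$, and every element of $B'_i - \cl_\mcM(\es)$ realises colour $i$, proving~(i).

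For (ii), hypothesis~(2) says that on $M - \cl_\mcM(\es)$ the relation defined by $\xi(x,y)$ coincides with ``$a$ and $b$ have the same colour''. Each element of $M - \cl_\mcM(\es)$ has \emph{exactly} one colour: Definition~\ref{coldef}~(1) supplies at least one colour, and Definition~\ref{coldef}~(3) applied with $b = a$ (using $a \in \cl(a)$) rules out two distinct colours on the same element. Hence ``same colour'' is an equivalence relation on $M - \cl_\mcM(\es)$, and therefore so is the relation defined by $\xi$. For (iii), the equivalence classes of ``same colour'' are in bijection with the set of colours that are actually realised on $M - \cl_\mcM(\es)$; by part~(i) all $l$ of them are realised, so there are exactly $l$ classes.

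I do not expect a genuine obstacle: the only non-bookkeeping step is the construction of $\mcB_i$ in~(i), and the main care to be taken there is in verifying that $\mcB_i$ is $l$-coloured and that the $k_0$-extension property indeed applies with $\mcA_0$ in the role of the ambient closed substructure of $\mcM$ — both of which are short checks.
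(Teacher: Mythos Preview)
Your proposal is correct and follows essentially the same approach as the paper: for~(i) you build a 1-dimensional $l$-coloured structure with the prescribed colour and use the $k_0$-extension property to embed it over $\cl_\mcM(\es)$, which is exactly what the paper does (it phrases it as recolouring a 1-dimensional closed substructure, but since such substructures carry no $V_{rel}$-relations this amounts to your construction); parts~(ii) and~(iii) are, as you say, immediate from hypothesis~(2) and~(i), and your extra remark that each element has exactly one colour makes explicit the only point the paper leaves implicit.
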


\noindent
{\em Proof.}
Suppose that $\mcM$ is an $l$-coloured structure that has the $k_0$-extension property.

(i) For every $l$-coloured $\mcN$ and 1-dimensional closed substructure $\mcA \subseteq \mcN$,
all $a \in A - \cl(\es)$ have the same colour, say $j$.
If we change the colour of all $a \in A - \cl(\es)$ to $i$, say, then the resulting
structures is still $l$-coloured. As we assume that $\mcM$ has the $k_0$-extension property
(and $\dim(\mcA) = 1 \leq k_0$) it follows 
(since $\mcA \uhrc \cl_\mcA(\es) \cong \mcM \uhrc \cl_\mcM(\es)$) that $\mcM$ has a substructure that is 
isomorphic with $\mcA$ and therefore some element of $\mcM$ has colour $i$ (where
$i$ is an arbitrary colour).

(ii) Follows directly from~(2).

(iii) By part~(i), for every $i \in \{1, \ldots, l\}$, there is some $a \in M - \cl(\es)$
with colour $i$. Hence, it follows from part~(ii) that $M - \cl(\es)$ is partitioned into 
exactly $l$ different equivalence classes by the relation defined by $\xi(x,y)$.
\hfill $\square$
\\

\noindent
The next lemma proves part~(i) of Theorem~\ref{main theorem, general form}.

\begin{lma}\label{part (i) of the theorem}
With $\delta_n^\mbC$-probability approaching 1 as $n \to \infty$ a structure $\mcM \in \mbC_n$
has the following property:
\begin{itemize}
\item[] For all $a, b \in M - \cl(\es)$ we have
$\mcM \models \xi(a,b)$ if and only if every $l$-colouring of $\mcM$
gives $a$ and $b$ the same colour. (In other words, whenever $\mcM' \in \mbK_n$
and $\mcM' \uhrc L_{rel} = \mcM$, then for all $a, b \in M - \cl(\es)$,
$\mcM \models \xi(a,b)$ $\Longleftrightarrow$ $\mcM' \models P_i(a) \wedge P_i(b)$
for some $i$.)
\end{itemize}
\end{lma}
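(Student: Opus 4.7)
The plan is to apply Theorem~\ref{ccv1}(i) to obtain, with $\delta_n^\mbK$-probability tending to $1$, that a structure $\mcM' \in \mbK_n$ has the $k_0$-extension property, and then transfer this statement to $\mbC_n$ via the definition of $\delta_n^\mbC$. Once we are on a structure $\mcM \in \mbC_n$ that admits at least one expansion to an $l$-coloured structure with the $k_0$-extension property, the desired biconditional will follow immediately from hypotheses~(1) and~(2).

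More precisely, let $Y_n \subseteq \mbK_n$ denote the set of $l$-coloured structures with the $k_0$-extension property, so that by Theorem~\ref{ccv1}(i), $\delta_n^\mbK(Y_n) \to 1$. Let $X_n \subseteq \mbC_n$ be the set of $l$-colourable structures $\mcM$ admitting at least one expansion $\mcM' \in Y_n$ with $\mcM' \uhrc L_{rel} = \mcM$. By Definition~\ref{definition of probability measure on C-n and S-n},
\[ \delta_n^\mbC(X_n) \ = \ \delta_n^\mbK\big(\{\mcM' \in \mbK_n : \mcM' \uhrc L_{rel} \in X_n\}\big) \ \geq \ \delta_n^\mbK(Y_n) \ \longrightarrow \ 1, \]
since $Y_n$ is contained in the set on the right. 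It therefore suffices to prove that every $\mcM \in X_n$ has the property stated in the lemma.

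Fix $\mcM \in X_n$, fix an expansion $\mcM' \in Y_n$ with $\mcM' \uhrc L_{rel} = \mcM$, and let $a, b \in M - \cl_\mcM(\es)$. Since $\xi(x,y)$ is an $L_{rel}$-formula, $\mcM \models \xi(a,b) \Leftrightarrow \mcM' \models \xi(a,b)$, and an $l$-colouring of $\mcM$ is precisely an $l$-coloured expansion $\mcM'' \in \mbK_n$ with $\mcM'' \uhrc L_{rel} = \mcM$. For the forward direction, assume $\mcM \models \xi(a,b)$ and let $\mcM''$ be any such expansion; then $\mcM'' \models \xi(a,b)$, so by hypothesis~(1) applied to $\mcM''$, the elements $a$ and $b$ receive the same colour in $\mcM''$. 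For the converse, suppose $\mcM \not\models \xi(a,b)$; then $\mcM' \not\models \xi(a,b)$, and since $\mcM'$ has the $k_0$-extension property, hypothesis~(2) gives that $a$ and $b$ have different colours in $\mcM'$. Because $\mcM'$ is itself an $l$-colouring of $\mcM$, it is not the case that every $l$-colouring of $\mcM$ assigns $a$ and $b$ the same colour.

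There is essentially no obstacle in this argument: the substantive work of constructing $\xi(x,y)$ and establishing~(1) and~(2) was done in Sections~\ref{sc} and~\ref{wc}, and the asymptotic prevalence of the $k_0$-extension property is Theorem~\ref{ccv1}(i); the present lemma is the immediate synthesis of these ingredients with the definition of $\delta_n^\mbC$.
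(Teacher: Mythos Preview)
Your setup matches the paper's exactly: define $Y_n\subseteq\mbK_n$ as the $l$-coloured structures with the $k_0$-extension property, let $X_n\subseteq\mbC_n$ be their $L_{rel}$-reducts, and use $\delta_n^\mbC(X_n)\ge\delta_n^\mbK(Y_n)\to 1$ via Theorem~\ref{ccv1}. The forward direction is also identical.

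The gap is in the converse. You argue: if $\mcM\not\models\xi(a,b)$ then in the \emph{particular} expansion $\mcM'\in Y_n$ the elements $a,b$ get different colours, so not every colouring identifies them. This establishes the headline biconditional ($\xi(a,b)$ iff \emph{every} colouring gives the same colour), but not the parenthetical ``in other words'' version, which requires that for \emph{each} expansion $\mcM''\in\mbK_n$ with $\mcM''\uhrc L_{rel}=\mcM$ one has $\mcM\models\xi(a,b)\Longleftrightarrow a,b$ same colour in $\mcM''$. These are genuinely different: from $\neg\xi(a,b)$ you exhibit \emph{one} colouring separating $a$ and $b$, but nothing in your argument excludes some other colouring $\mcM''$ that merges them. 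It is precisely the stronger per-expansion form that makes Corollary~\ref{unique colouring} (uniqueness of the $l$-colouring) ``immediate''; your version does not yield it.

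The paper closes this via Lemma~\ref{xi-hat is an equivalence relation}: because the good expansion $\mcN\in Y_n$ has the $k_0$-extension property, $\xi$ defines an equivalence relation on $M-\cl(\es)$ with exactly $l$ classes (this uses part~(iii) of that lemma, not just~(2)). For an arbitrary expansion $\mcM''$, hypothesis~(1) shows that $\xi$-equivalence refines the colour-partition of $\mcM''$; comparing the number of classes then forces the two relations to coincide, giving the biconditional for every $\mcM''$ at once. Your direct appeal to~(2) on the single distinguished $\mcM'$ is cleaner, but to recover the full statement you must add this equivalence-class counting step.
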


\noindent
{\em Proof.}
For every $n \in \mbbN$, let
\begin{align*}
\mbX_n^\mbK \ &= \ \big\{\mcM \in \mbK_n : \mcM \text{ has the $k_0$-extension property}\big\}, \text{ and}\\
\mbX_n^\mbC \ &= \ \big\{\mcM \in \mbC_n : \mcM = \mcN \uhrc L_{rel} \text{ for some } \mcN \in \mbX_n^\mbK\big\}.
\end{align*}
By the definition of $\delta_n^\mbC$ we have
\begin{align*}
\delta_n^\mbC\big(\mbX_n^\mbC) \ &= \ 
\delta_n^\mbK\big(\mcM \in \mbK_n : \mcM \uhrc L_{rel} \in \mbX_n^\mbC \}\big) \\
&= \ \delta_n^\mbK\big(\{\mcM \in \mbK_n : \mcM \uhrc L_{rel} = \mcN \uhrc L_{rel} 
\text{ for some } \mcN \in \mbX_n^\mbK\}\big) \ \geq \ 
\delta_n^\mbK\big(\mbX_n^\mbK\big).
\end{align*}
By Theorem~\ref{ccv1} we have $\lim_{n\to\infty}\delta_n^\mbK\big(\mbX_n^\mbK\big) = 1$
and hence $\lim_{n\to\infty}\delta_n^\mbC\big(\mbX_n^\mbC\big) = 1$.
Therefore it suffices to prove that if $\mcM \in \mbX_n^\mbC$ then
\begin{itemize}
\item[($*$)] for every $\mcM' \in \mbK_n$ such that $\mcM' \uhrc L_{rel} = \mcM$ and
all $a, b \in M - \cl(\es)$, we have $\mcM \models \xi(a,b)$ if and only
if $\mcM' \models P_i(a) \wedge P_i(b)$ for some $i \in \{1, \ldots, l\}$.
\end{itemize}
So suppose that $\mcM \in \mbX_n^\mbC$, $\mcM' \in \mbK_n$ and $\mcM' \uhrc L_{rel} = \mcM$.
As $\mcM \in \mbX_n^\mbC$ there is $\mcN \in \mbX_n^\mbK$ such that $\mcN \uhrc L_{rel} = \mcM$.
By definition of $\mbX_n^\mbK$, $\mcN$ has the $k_0$-extension property, 
so by Lemma~\ref{xi-hat is an equivalence relation},
$\xi(x,y)$ defines, in $\mcN$, an equivalence relation on $N - \cl(\es)$ with exactly $l$ equivalence classes.
Since $\xi(x,y) \in L_{rel}$ and $\mcN \uhrc L_{rel} = \mcM$ it follows that
$\xi(x,y)$ defines, in $\mcM$, an equivalence relation on
$M - \cl(\es)$ with exactly $l$ equivalence classes.

Note that if $a, b \in M - \cl(\es)$ and $\mcM \models \xi(a,b)$,
then, by~(1), we have $\mcM' \models P_i(a) \wedge P_i(b)$ for some $i$.
It follows that the equivalence relation defined by $\xi(x,y)$
on $M - \cl(\es)$ refines the equivalence relation induced on $M - \cl(\es)$
by the colouring of $\mcM'$. Since both equivalence relations 
have exactly $l$ equivalence classes it follows that they are the same relation.
In other words, for all $a, b \in M - \cl(\es)$,
$\mcM \models \xi(a,b)$ if and only if $\mcM' \models P_i(a) \wedge P_i(b)$
for some $i$. Hence we have proved~($*$) and the proof of the lemma is finished.
\hfill $\square$
\\

\noindent
Observe that Lemma~\ref{part (i) of the theorem} immediately implies the following which proves
part~(ii) of Theorem~\ref{main theorem, general form}:

\begin{cor}\label{unique colouring}
$\delta_n^\mbC\big(\{\mcM \in \mbC_n : \text{ $\mcM$ has a unique $l$-colouring}\}\big) = 1$
\end{cor}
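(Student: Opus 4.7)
The plan is to deduce Corollary~\ref{unique colouring} directly from Lemma~\ref{part (i) of the theorem}: the almost-sure property established there already pins down the colour equivalence relation of any $l$-colouring of $\mcM$ in terms of the $L_{rel}$-formula $\xi(x,y)$, and this alone forces the colouring to be unique up to permutation of colour labels.

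Concretely, I would let $\mbY_n \subseteq \mbC_n$ consist of those $\mcM$ for which the following holds: whenever $\mcM' \in \mbK_n$ and $\mcM' \uhrc L_{rel} = \mcM$, then for all $a,b \in M - \cl(\es)$,
\[\mcM \models \xi(a,b) \quad \Longleftrightarrow \quad \mcM' \models P_i(a) \wedge P_i(b) \text{ for some } i \in \{1,\ldots,l\}.\]
By (the parenthetical reformulation of) Lemma~\ref{part (i) of the theorem}, $\delta_n^\mbC(\mbY_n) \to 1$ as $n \to \infty$, so it suffices to check that every $\mcM \in \mbY_n$ has a unique $l$-colouring in the sense of ``unique up to permutation of the colour labels $\{1,\ldots,l\}$'', which is the convention used throughout the paper (cf.\ the abstract and Theorem~\ref{definability of colourings}(ii)).

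So fix $\mcM \in \mbY_n$ and suppose $\mcM'_1, \mcM'_2 \in \mbK_n$ are two expansions of $\mcM$ to an $l$-coloured structure. Applying the defining property of $\mbY_n$ to each of $\mcM'_1$ and $\mcM'_2$ in turn, I get that for all $a, b \in M - \cl(\es)$, $a$ and $b$ share a colour in $\mcM'_1$ iff $\mcM \models \xi(a,b)$ iff $a$ and $b$ share a colour in $\mcM'_2$. Hence the two colourings induce the same partition of $M - \cl(\es)$ into colour classes; since such a colouring is determined by this partition together with a labelling of the parts by elements of $\{1,\ldots,l\}$, $\mcM'_2$ is obtained from $\mcM'_1$ by a permutation of these labels. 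This is exactly uniqueness of the $l$-colouring up to permutation of colours, showing $\mbY_n \subseteq \{\mcM \in \mbC_n : \mcM \text{ has a unique } l\text{-colouring}\}$ and giving the corollary. There is no real obstacle beyond fixing the ``up to permutation'' convention; the work has already been done in Lemma~\ref{part (i) of the theorem}.
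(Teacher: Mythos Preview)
Your proposal is correct and follows exactly the approach the paper takes: the paper simply states that Lemma~\ref{part (i) of the theorem} ``immediately implies'' the corollary, and you have spelled out the straightforward details of why the almost-sure definability of the colour-equivalence relation by $\xi(x,y)$ forces uniqueness of the $l$-colouring up to permutation of colours.
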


\noindent
The next corollary proves part~(iii) of
Theorem~\ref{main theorem, general form}. 
It implies that there exists an $l$-colourable structure which cannot be $l'$-coloured if $l' < l$.
This may seem obvious, but if the reader tries to explicitly construct such a structure it may
become apparent that it is, on the level of generality considered here, not a trivial problem. 

\begin{cor}\label{almost surely all colours are needed}
If $1 \leq l' < l$, then 
\[ \lim_{n\to\infty} 
\delta_n^\mbC\big(\{\mcM \in \mbC_n : \mcM \text{ is not $l'$-colourable}\}\big)
\ = \ 1. \]
\end{cor}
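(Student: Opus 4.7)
The plan is to combine Corollary~\ref{unique colouring} with the observation that the canonical $l$-colouring witnessed in the proof of Lemma~\ref{part (i) of the theorem} actually uses every one of the $l$ available colours. Fix $l' < l$. Let $\mbX_n^\mbC \subseteq \mbC_n$ be the almost sure set introduced in the proof of Lemma~\ref{part (i) of the theorem}, and let $\mbU_n \subseteq \mbC_n$ be the set of structures having a unique $l$-colouring up to permutation of the colours. By Lemma~\ref{part (i) of the theorem} and Corollary~\ref{unique colouring} we have $\delta_n^\mbC(\mbX_n^\mbC \cap \mbU_n) \to 1$ as $n \to \infty$, so it suffices to show that no $\mcM \in \mbX_n^\mbC \cap \mbU_n$ is $l'$-colourable.

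Fix such an $\mcM$. By definition of $\mbX_n^\mbC$ there exists $\mcN \in \mbX_n^\mbK$ with $\mcN \uhrc L_{rel} = \mcM$; in particular, $\mcN$ has the $k_0$-extension property. Applying Lemma~\ref{xi-hat is an equivalence relation}(i) to $\mcN$, every colour $i \in \{1, \ldots, l\}$ is realised by some element of $N - \cl(\es)$, so the colouring $\gamma_\mcN : M - \cl(\es) \to \{1, \ldots, l\}$ induced by $\mcN$ is surjective.

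Now suppose, for contradiction, that $\mcM$ admits an $l'$-colouring $\gamma : M - \cl(\es) \to \{1, \ldots, l'\}$. Viewing $\gamma$ as a map into $\{1, \ldots, l\}$ via the inclusion $\{1, \ldots, l'\} \subseteq \{1, \ldots, l\}$, the colouring conditions in Definition~\ref{definition of l-colouring} remain satisfied (monochromaticity on $1$-dimensional subspaces is preserved verbatim, and multichromaticity on closed sets of relation tuples survives enlarging the range). Thus $\gamma$ is a second $l$-colouring of $\mcM$, one that uses at most $l' < l$ distinct colour values. Since $\gamma_\mcN$ is surjective onto $\{1, \ldots, l\}$ but $\gamma$ is not, no permutation of $\{1, \ldots, l\}$ can send $\gamma_\mcN$ to $\gamma$. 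This contradicts $\mcM \in \mbU_n$ and finishes the proof.

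There is really no substantial obstacle here beyond checking that the two colourings are genuinely inequivalent under permutation, which is immediate from the colour-count mismatch; the main work has already been done in proving Lemma~\ref{part (i) of the theorem} and deducing that structures with the $k_0$-extension property genuinely realise all $l$ colours (Lemma~\ref{xi-hat is an equivalence relation}(i)).
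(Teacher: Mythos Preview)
Your proof is correct and follows essentially the same approach as the paper: observe that an $l'$-colouring, viewed as an $l$-colouring, cannot be a permutation of a colouring that realises all $l$ colours, and then invoke almost sure uniqueness of the $l$-colouring. The only difference is that you justify the existence of a surjective $l$-colouring explicitly via Lemma~\ref{xi-hat is an equivalence relation}(i) and the $k_0$-extension property coming from $\mbX_n^\mbK$, whereas the paper simply asserts that for sufficiently large $n$ any $\mcM \in \mbC_n$ admits an $l$-colouring using all $l$ colours; your version is slightly more explicit but the underlying argument is the same.
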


\begin{proof}
Let $1 \leq l' < l$.
Note that every $l'$-colouring of a structure (using only the colours $1, \ldots, l'$)
is also an $l$-colouring.
Suppose that $\mcM \in \mbC_n$ has an $l'$-colouring, that is, there is $\mcM' \in \mbK_n$
such that $\mcM' \uhrc L_{rel} = \mcM$ and $(P_i)^{\mcM'} = \es$ for all $i = (l'+1), \ldots, l$. 
If $n$ is sufficiently large then
$\mcM$ also has an $l$-colouring in which all colours $1, \ldots, l$ are used, that is, there
is $\mcM'' \in \mbK_n$ such that $\mcM'' \uhrc L_{rel} = \mcM$ and $(P_i)^{\mcM''} \neq \es$ for 
all $i = 1, \ldots, l$.
Clearly the two colourings of $\mcM$ are not permutations of each other, that is,
there is no permutation $\pi$ of $\{1, \ldots, l\}$
such that for every $i \in \{1, \ldots, l\}$ and every $a \in M - \cl(\es)$
we have $\mcM' \models P_i(a)$ if and only if $\mcM'' \models P_{\pi(i)}(a)$.
Hence, for large enough $n$, if $\mcM \in \mbC_n$ has a unique $l$-colouring and $l' < l$,
then $\mcM$ is not $l'$-colourable.
Therefore Corollary~\ref{almost surely all colours are needed} follows from
Corollary~\ref{unique colouring}.
\end{proof}

\noindent
Now it remains to prove part~(iv) of Theorem~\ref{main theorem, general form}.
{\em So for the rest of this section we add the assumption that $\xi(x,y)$ is an
existential formula.}
We will give an explicit axiomatisation of the set of $L_{rel}$-sentences with asymptotic probability 1
and show that the given axioms form a countably categorical theory.
The axiomatisation of the limit theory 
\[\Big\{\varphi \in L_{rel} : \lim_{n\to\infty} \delta_n^\mbC(\varphi) = 1\Big\} \]
will be denoted $T_\mbC$ and will consist of four disjoint parts, denoted
$T_\xi, T_{pre}, T_{iso}$ and $T_{ext}$. Note that since we know, 
by Corollary~\ref{0-1 law for l-colourable structures}, that 
$\mbC_n$ satisfies a zero-one law when the measure $\delta_n^\mbC$ is used, it follows
that the limit theory is consistent (by compactness) and complete.
We will show that whenever $\varphi \in T_\mbC$, then $\lim_{n\to\infty} \delta_n^\mbC(\varphi) = 1$
and that $T_\mbC$ is countably categorical, hence complete.
It will then follow that, for every $L_{rel}$-sentence $\varphi$, 
$T_\mbC \models \varphi$ if and only if $\lim_{n\to\infty}\delta_n^\mbC(\varphi) = 1$.
The part of the axiomatisation $T_\mbC$ which we denote $T_\xi$ consists of only
one sentence $\varphi_1 \wedge \varphi_2$, where $\varphi_1$ and $\varphi_2$
are defined below.

Recall that, by Assumption~\ref{assumptions on languages}, 
in every $l$-coloured, or $l$-colourable, structure, 
the property ``$x$ belongs to the closure of $\es$'' is defined by the formula $\theta_0(x)$ 
and the property ``$y$ belongs to the closure of $\{x\}$''
is defined by the formula $\theta_1(x,y)$.

\begin{defi}\label{definition of U}{\rm
(i) Let $\mcU$ be an $l$-colourable structure which is not $l'$-colourable if $l' < l$,
and let $p = |U|$. Such $\mcU$ exists by Corollary~\ref{almost surely all colours are needed}\\
(ii) Let $\varphi_1$ be an $L_{rel}$-sentence which expresses that $\xi(x,y)$ defines an 
equivalence relation on the set of elements {\em not} satisfying $\theta_0(x)$.\\
(iii) Let $\varphi_2$ be the following $L_{rel}$-sentence:
\begin{align*}
\exists x_1, \ldots, x_p \Bigg( &\chi_\mcU(x_1, \ldots, x_p) \ \wedge \\
&\bigvee_{\underset{|I| = l}{I \subseteq \{1, \ldots, p \}}} 
\Bigg[ \bigwedge_{i \in I} \neg\theta_0(x_i) \ \wedge \ 
\bigwedge_{\underset{i \neq j}{i,j \in I}} \neg \xi(x_i, x_j) \ \wedge \ 
\forall y \bigg(\theta_0(y) \ \vee \ \bigvee_{i \in I} \xi(y, x_i)\bigg) \Bigg] \Bigg),
\end{align*}
where $\chi_\mcU(x_1, \ldots, x_p)$ is the characteristic formula of $\mcU$ for some
enumeration of $U$.
}\end{defi}

\begin{lma}\label{the property of U}
$\lim_{n\rightarrow \infty} \delta_n^{\mbC} (\varphi_1 \wedge \varphi_2) \ = \ 1$.
\end{lma}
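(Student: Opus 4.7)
The plan is to combine Lemma~\ref{part (i) of the theorem} with an extension-property argument. By Lemma~\ref{part (i) of the theorem} together with Corollary~\ref{unique colouring}, with $\delta_n^\mbC$-probability approaching~$1$ any $\mcM \in \mbC_n$ has a unique $l$-colouring $\mcM' \in \mbK_n$, and the formula $\xi(x,y)$ defines on $M - \cl_\mcM(\es)$ the equivalence relation ``having the same colour in $\mcM'$''. In particular $\xi$ is an equivalence relation on $M - \cl_\mcM(\es)$, so $\varphi_1$ is satisfied for such $\mcM$, and it remains to show that $\varphi_2$ also holds with probability approaching~$1$.

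To handle $\varphi_2$, fix any $l$-colouring $\mcU'$ of $\mcU$, which exists since $\mcU$ is $l$-colourable, and set $k_0' = \dim(\mcU')$. By Theorem~\ref{ccv1}(i), with probability approaching~$1$ the $l$-coloured structure $\mcM'$ has the $k_0'$-extension property; assume this. Both $\cl_{\mcU'}(\es) \subseteq \mcU'$ and $\cl_{\mcM'}(\es) \subseteq \mcM'$ are closed substructures and canonically isomorphic to $\mcG_0$ by Assumption~\ref{assumptions on languages}(6) together with Definition~\ref{coldef}(1)--(2). Applying the $\mcU'/\cl_{\mcU'}(\es)$-extension property to this initial isomorphism yields a closed substructure $\mcU'' \subseteq \mcM'$ and an $L$-isomorphism $\sigma : \mcU'' \to \mcU'$. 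Enumerating $U = \{u_1, \ldots, u_p\}$ and setting $b_i = \sigma^{-1}(u_i)$, the map $\sigma$ remains an $L_{rel}$-isomorphism after taking reducts, so $\mcM \models \chi_\mcU(b_1, \ldots, b_p)$.

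To produce the index set $I$, observe that since $\mcU$ is not $l'$-colourable for any $l' < l$, the colouring $\mcU'$ must use all $l$ colours. For each colour $c \in \{1, \ldots, l\}$ choose $i_c$ with $u_{i_c}$ of colour $c$ in $\mcU'$, and set $I = \{i_1, \ldots, i_l\}$; then $|I| = l$, and since $\sigma$ is an $L$-isomorphism each $b_{i_c}$ has colour $c$ in $\mcM'$, hence $b_{i_c} \notin \cl_\mcM(\es)$. By the first paragraph, $\mcM \models \neg\xi(b_{i_c}, b_{i_{c'}})$ whenever $c \neq c'$, and $\mcM \models \xi(y, b_{i_c})$ for every $y \in M - \cl_\mcM(\es)$ whose colour in $\mcM'$ is $c$. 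This exhibits the witnesses for $\varphi_2$. The only delicate point is verifying that the extension-property-produced $L$-embedding of $\mcU'$ still satisfies the $L_{rel}$-characteristic formula $\chi_\mcU$ after forgetting colours, which is automatic since $L_{rel} \subseteq L$ and $\chi_\mcU$ is quantifier-free.
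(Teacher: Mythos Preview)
Your argument is correct and shares the paper's core idea: embed an $l$-coloured copy of $\mcU$ via the extension property and use that $\xi$ encodes ``same colour'' to produce witnesses for $\varphi_2$. The paper, however, works entirely in $\mbK_n$: it shows directly (using only Lemma~\ref{xi-hat is an equivalence relation} and assumption~(2)) that any $l$-coloured $\mcM$ with the $\max(k_0,\dim(\mcU^+))$-extension property satisfies $\varphi_1 \wedge \varphi_2$, and only at the end transfers to $\mbC_n$ via the equality $\delta_n^\mbK(\varphi_1 \wedge \varphi_2) = \delta_n^\mbC(\varphi_1 \wedge \varphi_2)$, valid because $\varphi_1 \wedge \varphi_2$ is an $L_{rel}$-sentence. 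Your detour through Lemma~\ref{part (i) of the theorem} and Corollary~\ref{unique colouring} works but is heavier than necessary, and one step deserves to be made explicit: Theorem~\ref{ccv1}(i) is a statement about $\delta_n^\mbK$ on $\mbK_n$, so the claim ``with probability approaching~$1$ the unique colouring $\mcM'$ has the $k_0'$-extension property'' needs the same transfer inequality used in the proof of Lemma~\ref{part (i) of the theorem} (namely $\delta_n^\mbC(\{\mcM : \text{some colouring of }\mcM\text{ lies in }\mbX_n^\mbK\}) \geq \delta_n^\mbK(\mbX_n^\mbK)$), together with the observation that the $k$-extension property is invariant under permuting colours, so that on the unique-colouring set ``some colouring has it'' implies ``$\mcM'$ has it''. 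The paper's framing in $\mbK_n$ avoids this bookkeeping entirely.
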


\begin{proof}
Let $\mcU^+$ be an $l$-coloured structure such that $\mcU^+ \uhrc L_{rel} = \mcU$.
By Assumption~\ref{assumptions on languages}~(6) and the definition of $l$-coloured structures there is
a unique, up to isomorphism, $l$-coloured structure of dimension 0.
So if $\mcV = \cl_{\mcU^+}(\es)$ then every $l$-coloured structure has a substructure which 
is isomorphic to $\mcV$. It follows that if $\mcM$ is an $l$-coloured structure which has the
$\mcU^+/\mcV$-extension property, then $\mcM$ has a substructure which is isomorphic to $\mcU^+$ and
therefore $\mcM \uhrc L_{rel}$ has a substructure which is isomorphic to $\mcU$.
Let $k = \max(k_0, \dim(\mcU^+))$.
Note that, by~(1),~(2) and Lemma~\ref{xi-hat is an equivalence relation}, 
every $l$-coloured structure $\mcM$ with
the $k$-extension property has the following properties:
\begin{itemize}
\item $\mcM$ has a substructure which is isomorphic with $\mcU^+$.
\item $\xi(x,y)$ defines an equivalence relation on $M - \cl(\es)$ such that, 
for all $a, b \in M - \cl(\es)$, 
$\mcM \models \xi(a,b)$ if and only if $a$ and $b$ have the same colour.
\end{itemize}
We will now prove that if $\mcM$ is $l$-coloured and has the $k$-extension property,
then $\mcM \models \varphi_1 \wedge \varphi_2$. 

So suppose that $\mcM$ is $l$-coloured and has the $k$-extension property.
Then, as mentioned above, $\mcM$ has a substructure which is isomorphic to $\mcU^+$ and $\xi$ defines an
equivalence relation on $M - \cl(\es)$, so $\mcM \models \varphi_1$.
It remains to show that $\mcM \models \varphi_2$.
For notational simplicity we assume $\mcU^+ \subseteq \mcM$.
Let $U = \{a_1, \ldots, a_p\}$ be an enumeration of $U$ such that $\mcM \models \chi_\mcU(a_1, \ldots, a_p)$.
As at least $l$ different colours are needed to colour $\mcU$, there are 
$a_{i_1}, \ldots, a_{i_l} \in U - \cl(\es)$
such that if $j \neq j'$ then $a_{i_j}$ has a different
colour than $a_{i_{j'}}$, so $\mcM \models \neg\xi(a_{i_j}, a_{i_{j'}})$. 
Let $I = \{i_1, \ldots, i_l\}$.
Since there are only $l$ colours and all $a_{i_1}, \ldots, a_{i_l}$ have different colours,
it follows that every $b \in M - \cl(\es)$ must have the same colour as some
$a_{i_j}$ which implies $\mcM \models \xi(b, a_{i_j})$.
Hence $\mcM \models \varphi_2$.

We have proved that if $\mcM$ is $l$-coloured and has the $k$-extension property,
then $\mcM \models \varphi_1 \wedge \varphi_2$.
Consequently,
\[ \delta_n^\mbK\big(\{\mcM \in \mbK_n : \mcM \text{ has the $k$-extension property}\}\big)
\ \leq \ \delta_n^\mbK(\varphi_1 \wedge \varphi_2),\]
so by Theorem~\ref{ccv1}, $\lim_{n\to\infty}\delta_n^\mbK(\varphi_1 \wedge \varphi_2) = 1$.
Since $\varphi_1 \wedge \varphi_2$ is an $L_{rel}$-sentence we have
$\mcM \models \varphi_1 \wedge \varphi_2$ if and only if $\mcM \uhrc L_{rel} \models \varphi_1 \wedge \varphi_2$,
for every $l$-coloured structure $\mcM$. By the definition of $\delta_n^\mbC$ we get
$\delta_n^\mbK(\varphi_1 \wedge \varphi_2) = \delta_n^\mbC(\varphi_1 \wedge \varphi_2)$
for every $n$ and hence $\lim_{n\to\infty}\delta_n^\mbC(\varphi_1 \wedge \varphi_2) = 1$.
\end{proof}

\noindent
Recall Definition~\ref{definition of l-colouring} about $l$-colourings viewed as functions.
The next step is to define, given an $l$-colourable structure $\mcA$ and
$l$-colouring $\gamma : A - \cl(\es) \to \{1, \ldots, l\}$, a formula which
describes which elements have the same colour (with respect to $\gamma$).

\begin{defi}\label{definition of zeta-gamma}
Suppose that $\mcA$ is an $l$-colourable $\Lrel$-structure with universe $A = \{a_1, \ldots, a_{\alpha}\}$
and that $\gamma : A - \cl(\es) \to \{1, \ldots, l\}$ 
is an $l$-colouring of $\mcA$. 
Then $\zeta_\gamma(x_1, \ldots, x_{\alpha})$ denotes the formula
\begin{align*}
\bigwedge_{\substack{ 1 \leq i \leq \alpha \\ a_i \in \cl(\es)}} \theta_0(x_i) \ \wedge \ 
\bigwedge_{\substack{ 1 \leq i,j \leq \alpha \\ a_i, a_j \notin \cl(\es) \\ \gamma(a_i) = \gamma(a_j) }} 
\xi(x_i,x_j) 
\ \wedge \ \ 
\bigwedge_{\substack{ 1 \leq i,j \leq \alpha \\ a_i, a_j \notin \cl(\es) \\ \gamma(a_i) \neq \gamma(a_j) }} 
\neg\xi(x_i,x_j).
\end{align*}
\end{defi}

\begin{rmk}~\label{ccrmk1}
Notice that if $\gamma : A - \cl(\es)$ and $\gamma' : A - \cl(\es)$ 
are such that for all $a,b \in A - \cl(\es)$, $\gamma(a) = \gamma(b) \Longleftrightarrow 
\gamma'(a) = \gamma'(b)$, 
then $\zeta_\gamma = \zeta_{\gamma'}$.
This is because $\xi$ only discerns which elements have the same colour and not which colour they have.
We will use this in Lemma~\ref{ccl1}.
\end{rmk}

\noindent
Recall Definition~\ref{chardef} of the characteristic formula of an
$l$-coloured or $l$-colourable structure,
with respect to an ordering of its universe.

\begin{defi}\label{definition of colour compatible extension axiom}
(i) For every $n \in \mbbN$, let $\eta_n(x_1, \ldots, x_n)$ denote the $L_{pre}$-formula
\[\forall y \bigg( \theta_n(x_1, \ldots, x_n, y) \ \rightarrow \ \bigvee_{i=1}^n y = x_i \bigg),\]
and note that, in any $l$-coloured, or $l$-colourable, structure, $\eta(x_1, \ldots, x_n)$ expresses
that $\{x_1, \ldots, x_n\}$ is a closed set.\\
(ii) Suppose that $\B$ is an $l$-colourable $\Lrel$-structure and that 
$\A \subsetneq \B$ is a closed substructure of $\B$.  
Let $A = \{a_1, \ldots, a_{\alpha}\}$ and $B = \{a_1, \ldots, a_{\beta}\}$, where $\beta > \alpha$.
Moreover, suppose that 
$\gamma'$ is an $l$-colouring of $\B$ which extends an $l$-colouring $\gamma$ of $\A$, 
so $\gamma' \uhrc A = \gamma$.
We call the following sentence an {\bf \em instance of the $l$-colour compatible
$\B/\A$-extension axiom}:
\begin{align*}
\forall x_1,\ldots,x_\alpha \exists x_{\alpha+1},\ldots,x_\beta 
\Big(&\big[\chi_\A(x_1,\ldots,x_\alpha) 
\ \wedge \ \zeta_\gamma(x_1,\ldots,x_\alpha) \ \wedge \ 
\eta_\alpha(x_1, \ldots, x_\alpha) \big] 
\\
\longrightarrow \ 
&\big[\chi_\B(x_1,\ldots,x_\beta) \ \wedge \ \zeta_{\gamma'}(x_1,\ldots,x_\beta) \ \wedge \ 
\eta_\beta(x_1, \ldots, x_\beta) \big] \Big). 
\end{align*}
There are only finitely many $l$-colourings of $\mcB$ and therefore 
there are only finitely many instances of the $l$-colour compatible $\B/\A$-extension axiom. We define the
{\bf \em $l$-colour compatible $\B/\A$-extension axiom} to be the conjunction of all these instances. 
A sentence $\varphi$ is called an {\bf \em $l$-colour compatible extension axiom} if it is the
$l$-colour compatible $\mcB/\mcA$-extension axiom for some closed substructure 
$\mcA \subset \mcB$ where $\mcB$ is
$l$-colourable.
\end{defi}

\noindent 
Observe that every $l$-colour compatible extension axiom is an $L_{rel}$-sentence (so none of the
symbols $P_1, \ldots, P_l$ occurs in it).
The next lemma shows that whenever $\mcA \subset \mcB$ are $l$-colour{\em able} structures
and $A$ is closed in $\mcB$,
then there is $k$ such that if $\mcM$ is an $l$-colour{\em ed} structure and has the 
$k$-extension property, then $\mcM$ satisfies the $l$-colour compatible $\mcB/\mcA$-extension axiom.
As a corollary we will then get that, with $\delta_n^\mbC$-probability approaching 1 as $n$ tends to infinity,
a random $\mcM \in \mbC_n$ satisfies the $l$-colour compatible $\mcB/\mcA$-extension axiom.

\begin{lma}~\label{ccl1}
Assume that $\B$ is an $l$-colourable $\Lrel$-structure and $\A \subsetneq \B$ is a closed substructure of $\B$. 
Let $k = \max(k_0, \dim(\mcB))$.
If $\mcM$ is an $l$-coloured structure and has the $k$-extension property, then
$\mcM$ satisfies the $l$-colour compatible $\mcB/\mcA$-extension axiom.
\end{lma}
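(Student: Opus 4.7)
The $l$-colour compatible $\mcB/\mcA$-extension axiom is a finite conjunction of instances, one for each pair $(\gamma,\gamma')$ with $\gamma'$ an $l$-colouring of $\mcB$ extending an $l$-colouring $\gamma$ of $\mcA$, so I would verify each instance separately. Fix such a pair and enumerate $A=\{a_1,\ldots,a_\alpha\}$, $B=\{a_1,\ldots,a_\beta\}$; let $\mcM$ be an $l$-coloured structure with the $k$-extension property, and suppose $(a'_1,\ldots,a'_\alpha)\in M^\alpha$ satisfies the antecedent $\chi_\mcA\wedge\zeta_\gamma\wedge\eta_\alpha$.

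From $\chi_\mcA$ and $\eta_\alpha$ the map $f:\mcA\to\mcM\uhrc L_{rel}$, $a_i\mapsto a'_i$, is an $L_{rel}$-isomorphism onto the closed substructure $\mcA'=\mcM\uhrc A'$ where $A'=\{a'_1,\ldots,a'_\alpha\}$. Since $k\geq k_0$, hypothesis~(2) gives that $\xi(x,y)$ defines the same-colour relation on $M-\cl(\es)$. Combined with $\zeta_\gamma(a'_1,\ldots,a'_\alpha)$, the colouring $\gamma_\mcM$ that $\mcM$ induces on $\mcA'$ has exactly the same fibres as $\gamma$ transported along $f$; hence a permutation $\pi$ of $\{1,\ldots,l\}$ exists with $\gamma_\mcM(a'_i)=\pi(\gamma(a_i))$ whenever $a_i\notin\cl(\es)$.

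The pivotal move is to apply the $k$-extension property after relabelling colours. Replace $\gamma$ and $\gamma'$ by $\pi\circ\gamma$ and $\pi\circ\gamma'$; these are still $l$-colourings of $\mcA$ and $\mcB$, giving $l$-coloured $L$-structures $\mcA^\pi\subseteq\mcB^\pi$. By the choice of $\pi$, $f$ lifts to an $L$-isomorphism $f^+:\mcA^\pi\to\mcA'$ (with the $\mcM$-induced colouring). Because $\dim(\mcB^\pi)=\dim(\mcB)\leq k$, the $\mcB^\pi/\mcA^\pi$-extension property of $\mcM$ produces a closed $L$-substructure $\mcB'\subseteq\mcM$ with $\mcA'\subseteq\mcB'$ and an $L$-isomorphism $g:\mcB^\pi\to\mcB'$ extending $f^+$. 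Set $a'_i:=g(a_i)$ for $\alpha<i\leq\beta$.

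Verifying the consequent at $(a'_1,\ldots,a'_\beta)$ is then routine: $\chi_\mcB$ holds because $g$ is in particular an $L_{rel}$-isomorphism onto $\mcB'\uhrc L_{rel}$; $\eta_\beta$ holds because $\mcB'$ is closed in $\mcM$; and for $\zeta_{\gamma'}$, Remark~\ref{ccrmk1} gives $\zeta_{\gamma'}=\zeta_{\pi\circ\gamma'}$, and $\zeta_{\pi\circ\gamma'}(a'_1,\ldots,a'_\beta)$ holds because, using hypothesis~(2) once more, the $\xi$-pattern on $(a'_1,\ldots,a'_\beta)$ in $\mcM$ coincides with the same-colour pattern of $\pi\circ\gamma'$ transported by $g$. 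The one genuinely subtle point is precisely this permutation step: the $k$-extension property is stated for $L$-structures carrying a specific $l$-colouring, whereas the axiom is a purely $L_{rel}$-statement, and matching the pre-existing colouring that $\mcM$ imposes on $\mcA'$ requires the flexibility of first relabelling the colours of $\gamma$ and $\gamma'$.
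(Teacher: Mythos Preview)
Your proof is correct and follows essentially the same approach as the paper's own argument: both fix an instance, use hypothesis~(2) to match the $\xi$-pattern with the actual $\mcM$-colouring on the image of $\mcA$, permute the colours of $\gamma'$ so that the restriction to $A$ agrees with the $\mcM$-induced colouring of $\mcA'$, apply the $k$-extension property to the resulting $L$-structure, and then read off $\chi_\mcB$, $\eta_\beta$, and $\zeta_{\gamma'}$ (using Remark~\ref{ccrmk1} for the last). Your identification of the permutation step as the subtle point is exactly right and mirrors the paper's reasoning.
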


\begin{proof}
Let $\mcB$ be $l$-colourable $\Lrel$-structures and let
$\mcA$ is a closed substructure of $\mcB$. Assume that 
$A = \{a_1, \ldots, a_{\alpha}\}$ and $B = \{a_1, \ldots, a_{\beta}\}$ where $\beta > \alpha$
and let $k$ be as in the lemma.
It is enough to prove that if $\mcM$ is an $l$-coloured structure with the $k$-extension property, then $\mcM$ 
satisfies each instance of the $l$-colour compatible $\B/\A$-extension axiom.
So assume that $\mcM$ is an $l$-coloured structure with the $k$-extension property and choose an arbitrary instance of the $l$-colour compatible $\B/A$-extension axiom, which uses a colouring $\gamma' : B - \cl(\es) \rightarrow \{1,\ldots,l\}$ and its restriction to a $A$, $\gamma = \gamma' \uhrc (A - \cl(\es))$. Assume that
\[\M\models \chi_\A(a'_1,\ldots,a'_\alpha) \ \wedge \ 
\zeta_\gamma(a'_1,\ldots,a'_\alpha) \ \wedge \ 
\eta_\alpha(a'_1,\ldots,a'_\alpha) \]
for some $a'_1,\ldots,a'_\alpha\in M$.
Let $\A' =\M\reduct\{a'_1,\ldots,a'_\alpha\}$. 
Then by the definition of $\chi_\A$ there is an isomorphism $f:\A'\reduct\Lrel \rightarrow \A$
such that $f(a'_i) = a_i$ for $i = 1, \ldots, \alpha$.
For all $a'_i \in \{a'_1, \ldots, a'_\alpha\} - \cl(\es)$ and $j \in \{1, \ldots, l\}$ let 
\[\gamma_0(a'_i) = j \hspace{0.5cm} \Longleftrightarrow \hspace{0.5 cm}\M\models P_j(a'_i),\]
so $\gamma_0$ is an $l$-colouring of $\A'\uhrc \Lrel$. 
By the choice of $k$ and since $\M$ has the $k$-extension property and $\M\models\zeta_\gamma(a'_1,\ldots,a'_\alpha)$ it follows (using~(2)) that, for all 
$a'_i, a'_j\in\{a'_1,\ldots,a'_\alpha\}-cl(\emptyset)$,
\[\gamma_0(a'_i) = \gamma_0(a'_j) 
\hspace{0.5 cm} \Longleftrightarrow\hspace{0.5cm}
\mcM \models \xi(a'_i, a'_j)
\hspace{0.5 cm} \Longleftrightarrow\hspace{0.5cm}
\gamma(a'_i)=\gamma(a'_j).\]
From this it follows that (by permuting the colours assigned by $\gamma'$ if necessary) we can find a $l$-colouring $\gamma_1'$ of $\B$ such that if $\gamma_1$ is the restriction of $\gamma_1'$ to $A$, 
then for all $a_i \in\{a_1,\ldots,a_\alpha\} - \cl(\emptyset)$ we have 
$\gamma_1(a_i) = \gamma_1(f(a'_i)) = \gamma_0(a'_i)$ and for all 
$a,b \in B - \cl(\emptyset)$,
\[\gamma_1'(a)=\gamma'_1(b) \hspace{0.5cm}\Longleftrightarrow \hspace{0.5cm}\gamma'(a)=\gamma'(b).\]
Now expand $\B$ into an $L$-structure $\B^+$ by adding colours to it according to $\gamma'_1$, that is, 
if $b\in B - \cl(\es)$ and $\gamma_1'(b) = i$ then let $\B^+\models P_i(b)$.
By the definition of $\B^+$ it follows that $f^{-1}$ is an $L$-isomorphism from $\B^+\reduct A$ onto $\M\reduct \{a'_1,\ldots,a'_\alpha\}$. 
Since $\M$ has the $k$-extension property, we may extend $f^{-1}$ into an embedding $g:\B^+\rightarrow\M$,
where the image of $g$ is a closed subset of $M$.
For every $i \in \{\alpha_1, \ldots, \beta\}$, let $a'_i = g(a_i)$.
Then 
\[\mcM \models \chi_{\mcB}(a'_1, \ldots, a'_\beta) \ \wedge \ \eta_\beta(a'_1, \ldots, a'_\beta).\]
Moreover, as $g$ is an $L$-embedding we have, for every $j \in \{1, \ldots, l\}$ and every 
$a \in B - \cl(\es)$, $\mcM \models P_j(g(a))$ $\Longleftrightarrow$ $\mcB^+ \models P_j(a)$.
By the choice of $\gamma'_1$ it follows that, for all $i, j \in \{1, \ldots, \beta\}$
such that $a'_i, a'_j \notin \cl(\es)$,
$\gamma'_1(a'_i) = \gamma'_1(a'_j)$ $\Longleftrightarrow$ $\gamma'(a_i) = \gamma'(a_j)$.
From the definition of $\zeta_{\gamma'}$ and the choice of $k$ it follows
that $\mcM \models \zeta_{\gamma'}(a'_1, \ldots, a'_\beta)$.
The chosen instance of the $l$-colour compatible $\B/\A$-extension axiom is hence satisfied by $\M$, 
and since it was an arbitrary instance, $\M$ has to satisfy all of the instances.
\end{proof}

\noindent 
Now we can prove that every $l$-colour compatible extension axiom will 
almost surely be satisfied in an $l$-colour{\em able} structure.

\begin{cor}~\label{colto1}
For every $l$-colour compatible extension axiom $\varphi$, 
$\lim_{n\rightarrow \infty} \delta_n^{\mbC} (\varphi) = 1$.
\end{cor}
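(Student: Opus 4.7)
The plan is to combine Lemma~\ref{ccl1} with Theorem~\ref{ccv1} and then transfer from the measure $\delta_n^\mbK$ to $\delta_n^\mbC$ using the fact that the axiom $\varphi$ is an $L_{rel}$-sentence.

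More precisely, fix an $l$-colour compatible extension axiom $\varphi$. By Definition~\ref{definition of colour compatible extension axiom}, $\varphi$ is the $l$-colour compatible $\mcB/\mcA$-extension axiom for some $l$-colourable $L_{rel}$-structure $\mcB$ and some closed substructure $\mcA \subsetneq \mcB$. Set $k = \max(k_0, \dim(\mcB))$. Lemma~\ref{ccl1} then guarantees that every $l$-coloured structure $\mcM$ with the $k$-extension property satisfies $\varphi$; hence
\[
\delta_n^\mbK\!\big(\{\mcM \in \mbK_n : \mcM \text{ has the $k$-extension property}\}\big)
\ \leq \
\delta_n^\mbK\!\big(\{\mcM \in \mbK_n : \mcM \models \varphi\}\big).
\]
By Theorem~\ref{ccv1}(i), the left-hand side tends to $1$ as $n \to \infty$, so the right-hand side does as well.

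To finish, I would observe that since $\varphi$ lies in $L_{rel}$, for any $\mcM \in \mbK_n$ we have $\mcM \models \varphi \Longleftrightarrow \mcM \uhrc L_{rel} \models \varphi$. Therefore, by the definition of $\delta_n^\mbC$ (Definition~\ref{definition of probability measure on C-n and S-n}),
\[
\delta_n^\mbC(\varphi) \ = \ \delta_n^\mbK\!\big(\{\mcM \in \mbK_n : \mcM \uhrc L_{rel} \models \varphi\}\big) \ = \ \delta_n^\mbK\!\big(\{\mcM \in \mbK_n : \mcM \models \varphi\}\big),
\]
and the previous paragraph yields $\lim_{n\to\infty}\delta_n^\mbC(\varphi) = 1$, as desired.

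There is no real obstacle here: the substantive work has already been done in Lemma~\ref{ccl1} (producing the required extension inside any $l$-coloured structure with the $k$-extension property, matching the $\xi$-defined equivalence relation to an actual colouring via~(2)) and in Theorem~\ref{ccv1} (polynomial $k$-saturation implies the $k$-extension property holds with probability tending to $1$). The only thing to verify carefully is that $\varphi$ uses no colour symbols, so that truth of $\varphi$ is preserved between $\mcM$ and $\mcM \uhrc L_{rel}$; this is immediate from Definition~\ref{definition of colour compatible extension axiom}, since $\chi_\mcA$, $\chi_\mcB$, $\zeta_\gamma$, $\zeta_{\gamma'}$, and $\eta_n$ are all $L_{rel}$-formulas (the colour-dependence in $\zeta_\gamma, \zeta_{\gamma'}$ is absorbed into the formula $\xi(x,y) \in L_{rel}$).
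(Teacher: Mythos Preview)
Your proof is correct and follows essentially the same route as the paper: invoke Lemma~\ref{ccl1} to bound $\delta_n^\mbK(\varphi)$ below by the probability of the $k$-extension property, apply Theorem~\ref{ccv1}~(i), and then transfer to $\delta_n^\mbC$ using that $\varphi$ is an $L_{rel}$-sentence. Your explicit justification that $\varphi$ contains no colour symbols is a nice clarification that the paper leaves implicit.
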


\begin{proof}
Let $\varphi$ be an $l$-colour compatible extension axiom, so for some $\mcA \subseteq \mcB$
it is the $l$-colour compatible $\mcB / \mcA$-extension axiom. 
Let $k = \max(k_0, \dim(\mcB))$.
By Lemma~\ref{ccl1}, for every $n$, if $\mcM \in \mbK_n$ has the $k$-extension property 
then $\mcM \models \varphi$.
Hence, for every $n$,
\[\delta_n^{\mbK}\big(\{\mcM \in \mbK_n : \mcM \text{ has the $k$-extension property}\}\big) \ \leq \  \delta_n^{\mbK}(\varphi).\]
By Corollary~\ref{ccv1} we get $\lim_{n \to \infty} \delta_n^{\mbK}(\varphi) = 1$,
so it suffices to show that $\delta_n^{\mbC}(\varphi) = \delta_n^{\mbK}(\varphi)$ for all $n$.
But, as in the proof of Lemma~\ref{the property of U},
this follows from the definition of $\delta_n^{\mbC}$ and the fact that $\varphi$ is
an $\Lrel$-sentence.
\end{proof}

\noindent
The part $T_{ext}$ of the axiomatisation $T_\mbC$ consists, by definition, 
of all $l$-colour compatible extension axioms.
The axiomatisation $T_\mbC$ also needs to express that the formulas
$\theta_n$, $n \in \mbbN$, from Assumption~\ref{assumptions on languages} define
a pregeometry in every model of $T_\mbC$. This is the purpose of the
part of $T_\mbC$ which we denote $T_{pre}$.
More specifically, by using the formulas from Assumption~\ref{assumptions on languages}~(2), 
we can express, with an infinite set $T_{pre}$, of $L_{pre}$-sentences, 
properties~(1)--(3) of pregeometries in Definition~\ref{definition of pregeometry}
for {\em finite} sets.
In particular, since the closure of a set $A$ should not depend on how we order $A$, 
$T_{pre}$ contains, for each $n \geq 1$
and each permutation $\pi$ of $\{1, \ldots, n\}$, the sentence
\[ \forall x_1, \ldots, x_{n+1}\big( \theta_n(x_1, \ldots, x_n, x_{n+1} \big) \ \longleftrightarrow \ 
\theta_n(x_{\pi(1)}, \ldots, x_{\pi(n)}, x_{n+1}) \big). \]

\begin{lma}\label{every model of T-pre is a pregeometry}
Let $\mcM$ be an $L_{rel}$-structure such that $\mcM \models T_{pre}$.
Define a closure operator $\cl_\mcM$ as follows:
\begin{itemize}
\item[(a)] For every $n \in \mbbN$ and all $a_1, \ldots, a_{n+1} \in M$,
$a_{n+1} \in \cl_\mcM(a_1, \ldots, a_n)$ if and only if $\mcM \models \theta_n(a_1, \ldots, a_{n+1})$.
\item[(b)] For every $A \subseteq M$ and every $a \in M$, $a \in \cl_\mcM(A)$ if and only if
there is a finite $A' \subseteq A$ such that $a \in \cl_\mcM(A')$.
\end{itemize}
Then $(M, \cl_\mcM)$ is a pregeometry.
\end{lma}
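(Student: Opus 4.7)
The plan is to verify the four clauses of Definition~\ref{definition of pregeometry} for $(M, \cl_\mcM)$. By construction, $T_{pre}$ encodes properties~(1)--(3) of that definition at the level of \emph{finite} tuples via the formulas $\theta_n$ (together with permutation-invariance axioms, so that membership in the closure of a finite tuple does not depend on the enumeration of the tuple), while clause~(b) of the lemma's definition of $\cl_\mcM$ builds in the finite character property. The task is therefore to lift the finitary pregeometry statements that hold in $\mcM$ to arbitrary subsets of $M$ using finite character.

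Finite character (clause~4) is immediate from clause~(b). Reflexivity (clause~1) is also easy: for $a \in X$, finite character reduces $a \in \cl_\mcM(X)$ to $a \in \cl_\mcM(\{a\})$, which is a direct instance of the finitary reflexivity axiom in $T_{pre}$ applied to $\mcM$.

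For monotonicity (clause~2) I would argue as follows: assume $Y \subseteq \cl_\mcM(X)$ and $a \in \cl_\mcM(Y)$. Finite character yields a finite $Y_0 \subseteq Y$ with $a \in \cl_\mcM(Y_0)$, and for each $b \in Y_0$ a finite $X_b \subseteq X$ with $b \in \cl_\mcM(X_b)$. Setting $X_0 = \bigcup_{b \in Y_0} X_b$, a finite subset of $X$, the finitary monotonicity axioms of $T_{pre}$ (invoked on enumerations of $X_0$ and $Y_0$, with the permutation axioms absorbing any dependence on choice of enumeration) give $\cl_\mcM(Y_0) \subseteq \cl_\mcM(X_0) \subseteq \cl_\mcM(X)$, hence $a \in \cl_\mcM(X)$. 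Exchange (clause~3) follows similarly: if $a \in \cl_\mcM(X \cup \{b\}) - \cl_\mcM(X)$, then some finite $X_0 \subseteq X$ satisfies $a \in \cl_\mcM(X_0 \cup \{b\})$, and the monotonicity just proved (together with reflexivity, which gives $X_0 \subseteq \cl_\mcM(X_0) \subseteq \cl_\mcM(X)$) forces $a \notin \cl_\mcM(X_0)$; the finitary exchange axiom of $T_{pre}$ then yields $b \in \cl_\mcM(X_0 \cup \{a\}) \subseteq \cl_\mcM(X \cup \{a\})$.

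The argument is essentially mechanical, and the only point requiring care is that each first-order axiom in $T_{pre}$ is stated for a \emph{fixed enumeration} of variables, while the pregeometry axioms are phrased for sets. The explicit permutation-invariance sentences built into $T_{pre}$ (displayed just before the lemma) are precisely what is needed to pass between the two viewpoints, so the main ``obstacle'' is simply to check that these permutation axioms are tacitly invoked whenever a finitary axiom is applied to a set rather than a tuple.
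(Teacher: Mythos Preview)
Your proposal is correct and takes the same approach as the paper, which simply observes that clause~(b) gives finite character and that $T_{pre}$ together with clause~(a) yields properties~(1)--(3). You have just filled in the details of the lifting from finite tuples to arbitrary subsets that the paper leaves implicit.
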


\noindent
{\bf Proof.}
Suppose that $\mcM \models T_{pre}$ and let $\cl_{\mcM}$ be defined by~(a) and~(b).
From~(b) it follows that $\cl_\mcM$ has the finiteness property~(4) of 
Definition~\ref{definition of pregeometry} of a pregeometry.
From the definition of $T_{pre}$ and~(a) it follows that $\cl_\mcM$ has properties~(1)--(3)
of Definition~\ref{definition of pregeometry} of a pregeometry.
Hence, $(M, \cl_\mcM)$ is a pregeometry. 
\hfill $\square$
\\

\noindent
The fourth part of the aximatisation $T_\mbC$, denoted $T_{iso}$, will express that
``every closed finite substructure is $l$-colourable''.
For each $n\in\mathbb{N}$ let $\M_{n,1},\ldots,\M_{n,m_n}$ be an enumeration of all 
(finitely many) members of $\C_n$, and recall that $\chi_{\M_{n,i}}$ denotes the characteristic formula
of $\M_{n,i}$ (see Definition~\ref{chardef}). Recall that for every $n$, all structures in $\C_n$ have
the same universe (in fact their reduct to $L_{pre}$ is the same).
Let $s(n)$ be the cardinality of (the universe of) a structure in $\C_n$.
For every $n \in \mbbN$, there is an $L_{rel}$-sentence $\psi_n$ which expresses that 
every closed substructure of cardinality $s(n)$ is isomorphic to one of  $\M_{n,1},\ldots,\M_{n,m_n}$.
More precisely, we let $\psi_n$ be the $L_{rel}$-sentence
\begin{align*}
\forall x_1,\ldots,x_{s(n)}\Bigg(
&\bigg[ \bigwedge_{i \neq j} x_i \neq x_j \ \wedge \ 
\forall y \bigg( \theta_{s(n)}(x_1, \ldots, x_{s(n)}, y) \ \rightarrow \ 
\bigvee_{i = 1}^{s(n)} y = x_i \bigg) \bigg] \\
&\longrightarrow \ 
\bigvee_{i = 1}^{s(n)} \bigvee_{\pi} \chi_{M_{n,i}}(x_{\pi(1)}, \ldots, x_{\pi(s(n))}) \Bigg),
\end{align*}
where the disjunction `$\bigvee_{\pi}$' ranges over all permutations $\pi$ of $\{1,\ldots, s(n) \}$.
Let $T_{iso}=\{\psi_n : n \in \mbbN\}$.
Recall that $T_\xi = \{\varphi_1 \wedge \varphi_2\}$, where $\varphi_1$ and $\varphi_2$
were defined in Definition~\ref{definition of U}, that
$T_{ext}$ is the set of all $l$-colour compatible extension axioms and that
$T_{pre}$ was defined in the paragraph before Lemma~\ref{every model of T-pre is a pregeometry}.
Now we let
\[ T_\mbC \ = \ T_{pre} \cup T_{\xi} \cup T_{ext} \cup T_{iso}. \]
Notice that $T_\C$ contains only $\Lrel$-sentences. 

\begin{lma} ~\label{tclemma}
$T_\mbC$ is consistent and countably categorical, hence complete.
\end{lma}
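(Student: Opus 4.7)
The plan is to prove $T_\mbC$ is consistent by compactness, establish $\aleph_0$-categoricity via a back-and-forth between closed finite substructures driven by the $l$-colour compatible extension axioms, and derive completeness via \L{}o\'s--Vaught's test.

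For consistency, each axiom in $T_{pre}$ and $T_{iso}$ is satisfied by every member of every $\mbC_n$ (by Assumption~\ref{assumptions on languages} and the definition of $\mbC_n$); the sentence $\varphi_1 \wedge \varphi_2$ in $T_\xi$ has $\delta_n^\mbC$-limit $1$ by Lemma~\ref{the property of U}; and each axiom in $T_{ext}$ has $\delta_n^\mbC$-limit $1$ by Corollary~\ref{colto1}. Hence any finite $\Delta \subseteq T_\mbC$ is realised by some $\mcM \in \mbC_n$ for all sufficiently large $n$, and compactness supplies a model of $T_\mbC$.

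For $\aleph_0$-categoricity, let $\mcM, \mcN \models T_\mbC$ be countable. By Lemma~\ref{every model of T-pre is a pregeometry}, $T_{pre}$ equips each with a pregeometry; $T_{iso}$ forces every closed finite substructure to be $L_{rel}$-isomorphic to some $\mcM_{n,i} \in \mbC_n$; and $T_\xi$ makes $\xi$ an equivalence relation with exactly $l$ classes on $M - \cl_\mcM(\es)$ and on $N - \cl_\mcN(\es)$. Enumerate $M$ and $N$ and build an ascending chain of partial $L_{rel}$-isomorphisms $f_k : \mcA_k \to \mcA_k'$ between closed finite substructures, preserving the $\xi$-pattern, i.e.\ $\mcM \models \xi(a,b) \Leftrightarrow \mcN \models \xi(f_k(a), f_k(b))$ for all $a,b \in A_k - \cl(\es)$. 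At the base, $\mcA_0 = \cl_\mcM(\es) \cong \mcG_0 \cong \cl_\mcN(\es) = \mcA_0'$ by $T_{iso}$ applied with $n=0$.

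At the forth step, pick the next $b \in M \setminus A_k$, set $\mcB = \mcM \uhrc \cl_\mcM(A_k \cup \{b\})$, and fix via $T_{iso}$ template structures $\mcA^* \subsetneq \mcB^*$ with $\mcA_k \cong \mcA^*$ and $\mcB \cong \mcB^*$. The $\xi$-pattern on $\mcA_k$ transported to $\mcA^*$ identifies with an $l$-colouring $\gamma$ of $\mcA^*$ that extends to an $l$-colouring $\gamma'$ of $\mcB^*$: indeed, $\mcB^*$ is $l$-colourable, any colouring restricts to one of $\mcA^*$, and by permuting the $l$ colour labels we can match the $\xi$-pattern (which is harmless because $T_\xi$ pins exactly $l$ global $\xi$-classes and $\varphi_2$ guarantees all of them are realised). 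The instance of the $l$-colour compatible $\mcB^*/\mcA^*$-extension axiom corresponding to $(\gamma, \gamma')$, applied in $\mcN$ to the enumeration of $\mcA_k'$ (which by the invariant satisfies $\chi_{\mcA^*} \wedge \zeta_\gamma \wedge \eta_\alpha$), produces a closed $\mcB' \supseteq \mcA_k'$ with $\mcN$-enumeration realising $\chi_{\mcB^*} \wedge \zeta_{\gamma'} \wedge \eta_\beta$. This yields an extension $f_{k+1}$ preserving the invariant; the back step is symmetric, and $\bigcup_k f_k$ is an $L_{rel}$-isomorphism $\mcM \to \mcN$.

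No model of $T_\mbC$ is finite, since for each $k$ there exist templates $\mcA^* \subsetneq \mcB^*$ with $\dim(\mcB^*) > k$ by Assumption~\ref{assumptions on languages}~(6), and the associated axiom in $T_{ext}$ forces a closed substructure of dimension $> k$. Hence $T_\mbC$ is a countable $\aleph_0$-categorical theory without finite models, and \L{}o\'s--Vaught gives completeness. The main obstacle is the technical claim inside the forth step, namely that the $\xi$-pattern on a closed finite substructure of a model of $T_\mbC$ always corresponds to a valid $l$-colouring of the template that extends to the next template; this rests on the fact that $\xi$ already collapses dependent elements into one class, that $\varphi_2$ pins the number of global classes at exactly $l$, and that every closed finite substructure is $l$-colourable by $T_{iso}$, so permuting colour labels suffices to match any realised pattern.
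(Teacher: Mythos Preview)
Your overall architecture matches the paper's: consistency by compactness, $\aleph_0$-categoricity by a back-and-forth over closed finite substructures that preserves the $\xi$-pattern, and completeness by Vaught. The base case and the use of $T_{ext}$ on the target side are also as in the paper.

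There is, however, a genuine gap in the forth step. You need that the $\xi$-pattern on $\mcB = \mcM \uhrc \cl_\mcM(A_k \cup \{b\})$, as computed in $\mcM$, is induced by some $l$-colouring $\gamma'$ of $\mcB$ (extending the colouring $\gamma$ on $A_k$); only then can you pick that particular instance of the $\mcB/\mcA$-extension axiom and have $f_{k+1}$ preserve the invariant. Your justification (``$\xi$ collapses dependent elements, $\varphi_2$ pins exactly $l$ classes, $\mcB$ is $l$-colourable, so permuting labels suffices'') does not establish this: knowing that $\mcB$ admits \emph{some} $l$-colouring and that the global $\xi$-relation has $l$ classes does not imply that the restriction of $\sim_\xi$ to $B$ coincides with the partition coming from an $l$-colouring of $\mcB$. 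In particular, nothing you cite forces $\mcM \models \xi(a,b) \Rightarrow$ ``$a,b$ get the same colour in every $l$-colouring of $\mcB$''; property~(1) in the hypotheses of the theorem is stated only for finite $l$-coloured structures, and $\mcM$ is neither finite nor (a priori) $l$-coloured. Likewise, the claim that ``$\xi$ already collapses dependent elements'' is not part of the abstract hypotheses on $\xi$.

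The paper closes this gap as follows. Using $\varphi_2$, fix a copy of $\mcU$ inside $\mcM$ with representatives $u_1,\ldots,u_l$ of the $l$ distinct $\xi$-classes. Because $\xi$ is \emph{existential} and $U \cup B$ is finite, there is a finite closed $\mcN \subseteq \mcM$ with $U \cup B \subseteq N$ such that $\mcM \models \xi(a,b) \Leftrightarrow \mcN \models \xi(a,b)$ for all $a,b \in U \cup B$. By $T_{iso}$, $\mcN$ is $l$-colourable; pick an $l$-colouring $\gamma$. Now property~(1) applies to (the $l$-coloured expansion of) $\mcN$, giving $\sim_\xi \subseteq \sim_\gamma$ on $(U \cup B) - \cl(\es)$. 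Both relations have exactly $l$ classes there (the $u_i$ separate the $\xi$-classes; $\mcU$ needs all $l$ colours), so they coincide. Hence on $B - \cl(\es)$ the $\xi$-pattern equals the pattern of the genuine $l$-colouring $\gamma \uhrc B$, which is exactly the $\gamma'$ you need. The existentiality of $\xi$ and property~(1) are the ingredients your argument is missing.
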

 
\begin{proof}
From the definitions of $T_{pre}$ and $T_{iso}$ it follows that every $l$-colourable structure
is a model of $T_{pre} \cup T_{iso}$.
By compactness, Corollary~\ref{colto1}
and Lemma~\ref{the property of U}, it follows that $T_\C$ is consistent.

We now prove that $T_\mbC$ is countably categorical.
Assume that $\M$ and $\M'$ are $\Lrel$-structures such that $\M \models T_\C$, $\M'\models T_\C$ 
and $|M | = |M' | = \aleph_0$. 
Since $\mcM, \mcM' \models T_{pre}$ it follows from Lemma~\ref{every model of T-pre is a pregeometry} 
that if $\cl_\mcM$ is defined by saying
that $a \in \cl_\mcM(A)$ if and only if there are $m$ and $b_1, \ldots, b_m \in A$ such that
$\mcM \models \theta_m(b_1, \ldots, b_m, a)$, then $(M, \cl_\mcM)$ is a pregeometry;
and similarly for $\mcM'$.
By a back and forth argument we will build partial isomorphisms between $\M$ and $\M'$ such that each new one
extends the former ones. The union of these partial isomorphisms shows that $\M \cong \M'$.
The main part of the argument is to prove the following:

\begin{claim} \it
Let $\A \subseteq \M$, $A' \subseteq\M'$ be finite closed substructures 
(so $\cl_{\mcM}(\es) \subseteq A$ and $\cl_{\mcM'}(\es) \subseteq A'$)
and suppose that there is an isomorphism $f:\A\rightarrow\A'$ such that for all $a,b \in A - \cl_{\mcM}(\es)$ 
we have $\M \models \xi(a,b) \Longleftrightarrow \M' \models \xi(f(a),f(b))$. 
Then for every $c\in M - A$ ($d \in M' - A'$) there exist 
a closed subtructure $\B' \subseteq \M'$ ($\B \subseteq \M$)
and an isomorphism $g:\M \uhrc \cl_\M(A\cup \{c\}) \rightarrow \B'$ 
($g : \B \rightarrow \M' \uhrc \cl_{\M'}(A \cup \{d\}$) such that 
$\mcA' \subseteq \mcB'$ ($\mcA \subseteq \mcB$), 
$g$ extends $f$ and for all $a,b \in \cl_\M(A\cup \{c\}) - \cl_{\mcM}(\es)$ 
($a, b \in B - \cl_{\mcM}(\es)$) we have 
$\M \models \xi(a,b) \Longleftrightarrow \M' \models \xi(g(a),g(b))$
\end{claim}

\begin{proof}[Proof of the claim.]
Let $\A \subseteq \M$, $A' \subseteq\M'$ be finite closed substructures
and $f:\A\rightarrow\A'$ an isomorphism such that for all $a,b \in A - \cl_{\mcM}(\es)$ 
we have $\M \models \xi(a,b) \Longleftrightarrow \M' \models \xi(f(a),f(b))$. 
Let $A = \{a_1, \ldots, a_{\alpha}\}$ and $A' = \{f(a_1), \ldots, f(a_{\alpha})\}$.
Suppose that $c \in M - A$ and let $B = \cl_{\mcM}(A \cup \{c\})$.
(The case when $d \in M' - A'$ is proved in the same way.)
Enumerate $B$ as $B = \{b_1, \ldots, b_{\beta}\}$ so that $b_i = a_i$ for $1 \leq i \leq \alpha$.
Note that we must have $\beta > \alpha$.
Let $\B = \M \uhrc B$. 

Since $\mcM \models T_\mbC$ we have $\mcM \models \varphi_1 \wedge \varphi_2$,
which implies that
\begin{itemize}
\item[(i)] $\xi(x,y)$ defines an equivalence relation on $M - \cl_\mcM(\es)$ 
which we denote $\sim_{\xi}$,
\item[(ii)] there is a substructure of $\mcM$ which is isomorphic to $\mcU$
(from Definition~\ref{definition of U}), and for notational
simplicity we denote it by $\mcU$, so $\mcU \subseteq \mcM$, and there are 
$u_1, \ldots, u_l \in U - \cl_\mcM(\es)$ such that if $1 \leq i < j \leq l$ then 
$u_i \not\sim_{\xi} u_j$, and
\item[(iii)] for all $a \in M - \cl_{\mcM}(\es)$ there is $j \in \{1, \ldots, l\}$ such that 
$a \sim_{\xi} u_j$.
\end{itemize}

\noindent
It follows that the restriction of $\sim_{\xi}$ to $(U \cup B) - \cl_\mcM(\es)$ has exactly $l$ 
equivalence classes.
By assumption, $\xi(x,y)$ is an existential formula.
As $U \cup B$ is finite, it follows that there is a {\em finite} closed substructure $\mcN \subseteq \mcM$
such that $U \cup B \subseteq N$ and for all $a, b \in U \cup B$ we have $\mcM \models \xi(a,b)$ if and only
if $\mcN \models \xi(a,b)$. Consequently:
\[\text{for all } a, b \in (U \cup B) - \cl_\mcM(\es), \ a \sim_{\xi} b 
\ \Longleftrightarrow \ \mcN \models \xi(a,b).\]

\noindent
Since $\M\models T_{iso}$ we know that $\mcN$ is an $l$-colourable structure.
Let $\gamma : N - \cl_{\mcN}(\es) \to \{1, \ldots, l\}$ be an $l$-colouring of $\mcB$. 
Define an equivalence relation $\sim_\gamma$ on $N - \cl_\mcN(\es)$ by 
\[a \sim_\gamma b \ \Longleftrightarrow \ \gamma(a) = \gamma(b) \ \text{ for all } a,b \in N - \cl_\mcN(\es).\]  
Since $\mcU$ cannot (by its definition) be $l'$-coloured if $l' < l$, it follows that
$\sim_\gamma$ has exactly $l$ equivalence classes. In fact, the restriction of $\sim_\gamma$
to $(U \cup B) - \cl_\mcN(\es)$ has exactly $l$ equivalence classes.

Let $a, b \in (U \cup B) - \cl_\mcM(\es)$ and suppose that $a \sim_{\xi} b$.
Then $\mcM \models \xi(a,b)$ and by the choice of $\mcN$ we also get $\mcN \models \xi(a,b)$.
By~(1) we must have $\gamma(a) = \gamma(b)$, so $a \sim_\gamma b$.
Hence, the restriction of $\sim_{\xi}$ to 
$(U \cup B) - \cl_\mcM(\es)$ is a refinement of the restriction of $\sim_\gamma$ to 
$(U \cup B) - \cl_\mcM(\es)$.
As both equivalence relations restricted to $(U \cup B) - \cl_\mcM(\es)$ have
exactly $l$-equivalence classes, it follows that the must coincide.
In other words, for all $a,b \in (U \cup B) - \cl_\mcM(\es)$ 
we have $a \sim_{\xi} b$ if and only if $a \sim_\gamma b$, so
\begin{itemize}
\item[(iv)] for all $a,b \in B - \cl_\mcM(\es), \ \gamma(a) = \gamma(b) 
\ \Longleftrightarrow \ \mcM \models \xi(a,b)$.
\end{itemize}
Let $\gamma_B = \gamma \uhrc B$ and $\gamma_A = \gamma \uhrc A$.
Since $\chi_\mcA(x_1, \ldots, x_\alpha)$ is the characteristic formula of $\mcA$ we have
$\mcM \models \chi_\mcA(a_1, \ldots, a_\alpha)$.
From~(iv) it follows that $\mcM \models \zeta_{\gamma_A}(a_1, \ldots, a_\alpha)$
(see Definition~\ref{definition of zeta-gamma}).
The assumptions of the claim now imply that
\[\mcM' \models \chi_\mcA(f(a_1), \ldots, f(a_\alpha)) \ \wedge \ 
\zeta_{\gamma_A}(f(a_1), \ldots, f(a_\alpha)) \ \wedge \ 
\eta_\alpha(f(a_1), \ldots, f(a_\alpha)).\]
Since $\mcM' \models T_{ext}$ it follows, in particular, that $\mcM'$ satisfies the following instance
of the $l$-colour compatible $\mcB/\mcA$-extension axiom:
\begin{align*}
\forall x_1, \ldots, x_\alpha \exists y_1, \ldots, y_\beta \Big(
&\big[\chi_\mcA(x_1, \ldots, x_\alpha) \ \wedge \ \zeta_{\gamma_A}(x_1, \ldots, x_\alpha) 
\ \wedge \ \eta_\alpha(x_1, \ldots, x_\alpha) \big] \\
\longrightarrow \
&\big[ \chi_\mcB(x_1, \ldots, x_\beta) \ \wedge \ \zeta_{\gamma_B}(x_1, \ldots, x_\beta) 
\ \wedge \ \eta_\beta(x_1, \ldots, x_\beta) \big]\Big).
\end{align*}
It follows that there are closed substructure $\B' \subseteq \M'$ such that $\A' \subseteq B'$ 
and an isomorphism $g : \mcB \to \mcB'$ which extends $f$
with the property that for all $a, b \in B - \cl(\es) = \cl_{\mcM}(A \cup \{c\}) - \cl_\mcM(\es)$,
$\mcM \models \xi(a,b)$ if and only if $\mcM' \models \xi(g(a),g(b))$.
\end{proof}

\noindent {\it Continuation of the proof of the lemma.} 
By the claim, it now suffices to prove that the assumptions of the claim hold for at least
one pair, $\mcA, \mcA'$, such that $\mcA \subseteq \mcM$ and $\mcA' \subseteq \mcM'$ are
closed substructures of the respective superstructure.
We claim that this holds for $\mcA = \mcM \uhrc \cl_\mcM(\es)$ and $\mcA' = \mcM' \uhrc \cl_{\mcM'}(\es)$.
Indeed, by the definition of $l$-colourable structure,
Assumption~\ref{assumptions on languages} and $\mcM, \mcM' \models T_{iso}$, it follows that
with $\mcA$ and $\mcA'$ defined in this way we have 
$\mcA \cong \mcA'$.
Since $A - \cl_\mcM(\es) = \es$ the preservation of $\xi$ for $a,b \in A - \cl_\mcM(\es)$
is trivially satisfied. 
This concludes the proof of Lemma~\ref{tclemma}, and hence also of Theorem~\ref{main theorem, general form}.
\end{proof}

\noindent
{\bf Errata for \cite{Kop12}.}
Here we mention some missed assumptions that should be added to some results of~\cite{Kop12}.\\
(i) Theorems~7.31,~7.32 and~7.34 in \cite{Kop12} needs part~(5) from 
Assumption~\ref{assumptions on languages}
in this article. (This assumption could most conveniently be added to Assumption~7.10 in~\cite{Kop12}.)
This assumption is implicitly used in the proof of Lemma~8.5 in \cite{Kop12} and guaratees that 
if $\mcA$ is a closed substructure of a represented structure (see Definition~7.4 \cite{Kop12}) $\mcM$
and $B \subseteq A$, then the closure of $B$ is the same whether computed in $\mcA$ or in $\mcM$. \\
(ii) Theorem~7.32 in \cite{Kop12} needs the following two additional assumptions 
(implicitly made in the proof of
that theorem):
\begin{itemize}
\item[(a)] There is, up to isomorphism, a unique represented structure (Definition~7.4 in~\cite{Kop12})
with dimension 0. (Note that by the definitions in this article, there is a unique, up to isomorphism,
(strongly) $l$-coloured structure with dimension 0.) 
This assumption is needed in the proof of Lemma~8.12 in~\cite{Kop12}.
Without it, one only gets a limit law (convergence, but not necessarily to 0 or 1), 
since one gets a distinct ``limit theory'' for every represented isomorphism type of dimension 0.

\item[(b)] For every $n \in \mbbN$, there is a ``characteristic'' $L_{pre}$-formula 
$\chi_{\mcG_n}(x_1, \ldots,x_{m_n})$ of $\mcG_n$,
where $m_n = |G_n|$, such that if $\mcA$ is an $L_{pre}$-structure in which the formulas $\theta_n$
define a pregeometry and $\mcA \models \chi_{\mcG_n}(a_1, \ldots, a_s)$ for some enumeration $a_1, \ldots, a_s$ of $A$,
then $\mcA \cong \mcG_n$. This is~(7) of Assumption~\ref{assumptions on languages} in this article
(except for omitting here the requirement that $\chi_{\mcG_n}$ is quantifier-free),
and it holds for the examples of pregeometries (and corresponding languages) 
considered in \cite{Kop12} and in this article.
But it is not a consequence of the other assumptions made (in Assumption~7.3 and~Assumption~7.10
in~\cite{Kop12}), so it needs to be added.
\end{itemize}
These remarks affect {\em only} Sections~7--8 in~\cite{Kop12}, because the other sections
do {\em not} consider (nontrivial) pregemetries.

\end{document}